\documentclass[11pt,reqno]{amsart}

\usepackage[T1]{fontenc}
\usepackage[utf8]{inputenc}
\usepackage{lmodern} 
\usepackage[final]{microtype}
\usepackage{csquotes}

\usepackage[dvipsnames]{xcolor}

\usepackage[a4paper,margin=1.1in]{geometry}
\usepackage{amsmath,amssymb,amsthm,mathtools} 
\usepackage{bm}
\usepackage{shuffle}
\usepackage{mathrsfs}
\usepackage{thmtools}
\usepackage{tikz-cd}

\usepackage{comment}
\usepackage{enumitem}
\usepackage{todonotes}

\usepackage{booktabs}
\usepackage{tabularx}
\usepackage{array}
\usepackage{xparse}

\usepackage{booktabs}
\usepackage{tabularx}
\usepackage{array}
\usepackage{xparse}

\NewDocumentEnvironment{notationtable}{O{Notation} +b}
{%
  \begin{table}[h!]
  \centering
  \small
  \begin{tabularx}{0.96\textwidth}{%
    @{}%
    >{\raggedright\arraybackslash}p{0.22\textwidth}%
    >{\raggedright\arraybackslash}X%
    >{\raggedleft\arraybackslash}p{0.18\textwidth}%
    @{}}
  \toprule
  \textbf{Symbol} & \textbf{Description} & \textbf{Reference} \\
  \midrule
  #2
  \bottomrule
  \end{tabularx}
  \caption{#1}
  \label{tab:Notation}
  \end{table}
}
{}

\NewDocumentCommand{\notation}{m m m}{%
  \ensuremath{#1}\dotfill & #2 & #3 \\
}

\usepackage[dvipsnames]{xcolor}
\usepackage[
  colorlinks,
  linkcolor=MidnightBlue,
  citecolor=ForestGreen,
  urlcolor=BrickRed,
  pdfauthor={}
]{hyperref}
\usepackage[nameinlink,capitalize]{cleveref}

\usepackage[
  backend=biber,
  style=alphabetic,
  maxbibnames=99,
  url=false,
  doi=true,
  isbn=false,
  giveninits=true
]{biblatex}
\usepackage{orcidlink} 

\newcommand{\R}{\mathbb{R}}
\newcommand{\N}{\mathbb{N}}

\newcommand{\cD}{\mathcal{D}}
\newcommand{\cG}{\mathcal{G}}
\newcommand{\cH}{\mathcal{H}}
\newcommand{\cI}{\mathcal{I}}
\newcommand{\cL}{\mathcal{L}}

\newcommand{\frg}{\mathfrak{g}}

\newcommand{\floor}[1]{\lfloor #1 \rfloor}

\newcommand{\var}{\textrm{var}}

\newcommand{\up}{ {\floor{p}} }

\newcommand{\cost}{ \theta }
\newcommand{\mcost}{ \Theta }

\DeclareMathOperator{\id}{id}

\DeclareMathOperator{\eva}{ev}

\DeclareMathOperator{\proj}{proj}

\newcommand{\rev}[1]{\overset{\leftarrow}{#1}}
\newcommand{\met}{\textbf{LipMet}_1}
\newcommand{\metlift}{\textbf{MetExt}_1}

\newcommand{\pvdi}[4]{\left\|#1\right\|_{#2\text{-var},#3,#4}}
\newcommand{\pvd}[3]{\left\|#1\right\|_{#2\text{-var},#3}}
\newcommand{\pvi}[3]{\left\|#1\right\|_{#2\text{-var},#3}}
\newcommand{\pv}[2]{\left\|#1\right\|_{#2\text{-var}}}

\newcommand{\control}{\omega}
\newcommand{\CC}{{\textrm{CC}}} 

\newcommand{\norm}[1]{\left\lVert#1\right\rVert}

\newcommand{\beq}{\begin{equation}}
\newcommand{\eeq}{\end{equation}}

\setlist[enumerate,1]{label=(\roman*)}

\theoremstyle{definition}
\declaretheorem[name=Theorem,numberwithin=section]{theorem}

\declaretheorem[name=Proposition,sibling=theorem]{proposition}
\declaretheorem[name=Lemma,sibling=theorem]{lemma}
\declaretheorem[name=Corollary,sibling=theorem]{corollary}
\declaretheorem[name=Definition,style=definition,sibling=theorem]{definition}

\declaretheorem[name=Claim,style=definition,sibling=theorem]{claim}
\declaretheorem[name=Example,style=definition,sibling=theorem]{example}
\declaretheorem[name=Counterexample,style=definition,sibling=theorem]{counterexample}
\declaretheorem[name=Remark,style=remark,sibling=theorem]{remark}

\declaretheorem[name=Theorem,numberwithin=section]{specialtheorem}

\declaretheorem[name=Corollary,sibling=specialtheorem]{specialcorollary}

\crefname{theorem}{Theorem}{Theorems}
\crefname{conjecture}{Conjecture}{Conjecture}
\crefname{lemma}{Lemma}{Lemmas}
\crefname{proposition}{Proposition}{Propositions}
\crefname{corollary}{Corollary}{Corollaries}
\crefname{definition}{Definition}{Definitions}
\crefname{assumption}{Assumption}{Assumptions}
\crefname{example}{Example}{Examples}

\crefname{counterexample}{Counterexample}{Counterexamples}
\crefname{remark}{Remark}{Remarks}
\crefname{equation}{}{}
\numberwithin{equation}{section}
\crefname{specialtheorem}{Theorem}{Theorems}
\crefname{specialcorollary}{Corollary}{Corollaries}

\title[Signature group]{
Metric Geometry of the Signature Group for \texorpdfstring{$p$}{p}--Variation Rough Paths}

\author{Felix Medwed} 
\address{Institute of Mathematics, University of Potsdam, D-14469 Potsdam, Germany}
\email{felix.medwed@uni-potsdam.de}

\author{Sylvie Paycha} 
\address{Institute of Mathematics, University of Potsdam, D-14469 Potsdam, Germany}
\email{paycha@math.uni-potsdam.de}

\author{Alexander Schmeding} 
\address{Norwegian University of Science and Technology, NO-7491 Trondheim, Norway}
\email{alexander.schmeding@ntnu.no}

\date{\today}

\begin{document}

\begin{abstract}

The signatures of $p$-rough paths form a subgroup of sufficiently high-level truncated tensor algebras, whose inverse limit is a subgroup of the full tensor algebra. For $p \geq 1$, we  provide a {\it top-down description} of the signature group as the inverse limit of finite-dimensional Carnot--Carathéodory geometries in the $p$-variation setting.
We show that every compatible choice of metrics induces a topological tree structure on the inverse-limit group, under which the signature group is not a topological group. This extends the results of Enrico Le Donne and Roland Züst from bounded variation to rough paths. We also characterise the dependence of the inverse-limit groups and their metric completions on the choice of metric, identifying them with the tree-reduced path group of Horatio Boedihardjo, Xiang Geng, Terry Lyons, and Danyu Yang.

\end{abstract}

\maketitle

\tableofcontents

\textbf{Keywords and phrases:} Metric geometry, p-variation, inverse limits, commutative diagram, signature group, weakly geometric rough paths, R-trees, tree-like equivalence, tree-reduced paths, reduced path group, Carnot groups \\

\textbf{MSC2020:}  51F30 (primary), 60L20, 22E25, 53C17 (secondary)\\

\section{Introduction}

The signature group is a strict subgroup of the group-like elements of the tensor series algebra. It is isomorphic to the group of unparametrised tree-reduced paths, with path concatenation and reversal corresponding to multiplication and inversion. This correspondence was established for bounded variation paths over  $\R^n$ in the seminal work of \cite{HamblyLyonsUniqueness_2010} and later extended to weakly geometric $p$-rough paths for all $p\geq 1$ in \cite{BOEDIHARDJO_LYONS_2016720}. These results identify the signature group as a faithful representation of the underlying path group, making it a natural object of study. Recent work has focused on its topological \cite{cass2024topologiesunparameterisedroughpath} and geometric properties.

As observed in \cite{HamblyLyonsUniqueness_2010}, the signature group admits several natural topologies, each leading to different geometric features. At the finite level, the $k$-truncated signature group is the free nilpotent Lie group of step $k$, whose canonical geometry is given by the Carnot--Carathéodory metric (see  \cite[Chapter 7]{Friz_Victoir_2010_fullbook} and \cite[Chapter 11]{LeDonne2025}).  Building on this viewpoint,  the authors of \cite{LeDonneZuest_public}  realised the bounded variation signature group as the inverse limit of Carnot groups endowed with their Carnot--Carathéodory metrics. The resulting metric space is an $\R$-tree, reflecting the tree-reduced nature of signatures, but right translations fail to be continuous, so the signature group is not a topological group. 

The $\R$-tree structure was already observed in \cite[Proposition 4.1]{BOEDIHARDJO_LYONS_2016720}. In the present paper, we revisit it from the metric-geometric perspective of \cite{LeDonneZuest_public} and extend the inverse-limit construction to weakly geometric  $p$-rough paths for all $p\geq 1$. This provides a {\it top-down description} of the signature group as the inverse limit of finite-dimensional Carnot--Carathéodory geometries in the $p$-variation setting.

To highlight the non-triviality of this extension, we give subsequently three examples that motivate the main proposition below. The first recalls the construction of Le Donne and Z\"ust, the second explains the failure of a direct extension of the limiting metric to $p>1$, and the third illustrates how this obstruction can be resolved. We presuppose the notation introduced in the later sections.

\begin{example}\label{ex:Diagram-for-simple-path}
Let $e_1,e_2$ be the standard basis vectors of $\mathbb{R}^2$ endowed with the norm $\|\cdot\|$. Let $X\colon [0,1] \to \mathbb R^2$ be the path starting at the origin that first follows $e_1$ and then $e_2$, given by
\begin{equation*}
        X_t := \left\{\begin{array}{lr}
            2t e_1, & \text{ if } t \in [0,1/2] \\
            e_1 + (2t-1) e_2, & \text{ if } t \in [1/2,1]
        \end{array}\right.
    \end{equation*}
    This path is tree-reduced with length $\ell := \|e_1\| + \|e_2\|$ for any chosen norm $\|\cdot\|$ on $\R^2$ (i.e.~euclidean length $\ell = 2$). Then, the path $Y$ given by the Lyons lift is
    \begin{equation*}
        Y_t := \left\{\begin{array}{lr}
            \exp(2t e_1), & \text{ if } t \in [0,1/2] \\
            \exp(e_1) \exp((2t-1) e_2), &\text{ if } t \in [1/2,1]
        \end{array}\right.
    \end{equation*}
   The path $Y_t$ is injective with signature
    \begin{equation*}
        S(X) = Y_1 = \exp(e_1) \exp(e_2).
    \end{equation*}
    On the other hand one may consider the inverse limit object
    \begin{equation*}
        Z = ((1,e_1 + e_2), (1,e_1+e_2,\exp_2(e_1)\exp_2(e_2)),...) =: (g_k)_{k \in \N}.
    \end{equation*}
   By construction $(\Pi^1)^{-1}(S(X)) = Z$ where $\Pi^1$ is defined in \eqref{eq:Inverse-limit-to-product-conversion-map-p}. 
   The corresponding limit of the $\CC$ metrics (ref. \eqref{eq:CC-distance-p=1}) is then derived from \cref{prop:Isomorphisms} for $p=1$ (ref.~\eqref{eq:Inverse-Metric}). 
   Combining \cite[Corollary 1.6 with its proof]{HamblyLyonsUniqueness_2010} (or see \cite[Remark 4.1]{BOEDIHARDJO_LYONS_2016720}) and using \eqref{eq:Diagram-p-variation-LZ-precise}, we then infer that 
    \begin{equation*}
        \ell = \sup_{k \in \N} d^{1-\CC}_k(1_k,\exp_k(e_1)\exp_k(e_2)) = d^1(1,S(X)) = D^1([O]_\circlearrowleft, [Y]_\circlearrowleft).
    \end{equation*}
    where one can replace the supremum by the limit due to the 1-Lipschitz property of the canonical projections between truncations of $S(X)$.
    Note that a direct computation of the $\CC$ length is not fruitful due to the growth of complexity of the underlying control problems. \\
    In this way, we provided an explicit example for the diagram (ref. \cref{thm:Main-Theorem-Commutative-Diagram}) below.
\end{example}

\begin{example}[Continuation of \cref{ex:Diagram-for-simple-path}: Blow-Up] \label{ex:Blow-Up-CC-metric}
    Let us now consider for every $k \in \N\backslash\{1\}$ the elements
    \begin{equation*}
        g_k = \exp_k([e_1,e_2]) = \left\{\begin{array}{rl}
            \left(1,0,\frac{[e_1,e_2]}{1!},0,\frac{[e_1,e_2]^{\otimes_k 2}}{2!}, ...,\frac{[e_1,e_2]^{\otimes_k (k-1)}}{((k-1)/2)!},0\right) &, \text{ if $k$ odd}  \\
            \left(1,0,\frac{[e_1,e_2]}{1!},0,\frac{[e_1,e_2]^{\otimes_k 2}}{2!}, ...,\frac{[e_1,e_2]^{\otimes_k k}}{(k/2)!}\right) &, \text{ if $k$ even}
        \end{array}\right.
    \end{equation*}
    formed by the Lie bracket (ref. \hyperref[not:Lie-Bracket-exponential-function]{p.~\pageref*{not:Lie-Bracket-exponential-function}}) in contrast to $\exp(e_1)\exp(e_2) = \exp(e_1+e_2 + [e_1,e_2]/2 + ...)$ by the Baker--Campbell--Hausdorff formula. Then, as $k \to \infty$, the corresponding group-like element is the signature of a genuine rough path, the \emph{pure area path} $t \mapsto \exp_2(t [e_1,e_2])$ due to the interpretation of the second level of the tensor series (ref.~\cite{Friz2020_chapter2}). We now show that the sequence of $\CC$-metrics (w.r.t.~finite euclidean length), indexed by the truncation level $k$, blows up. \\
    Indeed, suppose w.l.o.g.~there was a continuous path $X\colon [0,1] \to \mathbb R^2$ of bounded variation, s.t.~$S(X) = \exp([e_1,e_2])$. Then, by the inequality for admissible tensor norms (ref.~\hyperref[not:admissible-tensor-norms]{p.~\pageref*{not:admissible-tensor-norms}})
    \begin{equation*}
        \|\proj_k S(X)\|_k \leq \frac{\ell^k}{k!}, \quad \forall k \in \N,
    \end{equation*}
    where $\ell = \operatorname{Length}(X)$ is again the length of the path $X$ w.r.t.~the here chosen euclidean norm.
    Then, since we consider the euclidean structure for $\R^2$, for every $k = 2m$ with $m \in \N$ we get by $S(X) = \exp([e_1,e_2])$ for the Hilbert-Schmidt norms
    \begin{equation*}
        \frac{\| [e_1,e_2]^{\otimes m} \|_{2m}}{m!} = \frac{\| [e_1,e_2] \|^{m}_2}{m!} = \frac{\sqrt{2}^m}{m!} \leq \frac{\ell^{2m}}{(2m)!}
    \end{equation*}
    and by rearranging and taking the infimum on the upper bound over all bounded variation paths that achieve the same $2m$-truncated signature, we get
    \begin{equation*}
        d_{2m}^{1\textnormal{-}\CC}(1_{2m},g_{2m}) \geq 2^{1/4} \left( \frac{2m!}{m!} \right)^{\frac{1}{2m}} \xrightarrow{m \to \infty} \infty.
    \end{equation*}
    But $\sup_{k \in \mathbb{N}} d_k^{1-\CC}(1_k,g_k) \leq \ell < \infty$, since $X$ was assumed to be of bounded variation. Hence, we arrive at a contradiction. Another such phenomenon can be seen (ref. \cite[Exercise 2.10]{Friz2020_chapter2}) by the approximation sequence
    \begin{equation*}
        X^{(n)}_t = \frac{\cos(2\pi n^2 t)}{n} e_1 + \frac{\sin(2 \pi n^2 t)}{n} e_2
    \end{equation*}
    for which its euclidean length $\ell^{(n)} := \operatorname{Length}(X^{(n)}) = 2\pi n \to \infty$ as $n \to \infty$. Using the fundamental bounds for the $\CC$-metrics, we again obtain a blow up of the limiting $\CC$-metric.
\end{example}

\begin{example}[Continuation of \cref{ex:Blow-Up-CC-metric}: Higher variation]
    Using the max type metric (ref. \eqref{eq:Max-metric-for-any-p}) we see that for the Hilbert-Schmidt norms the $p$-variation (ref. \eqref{eq:General-p-variation-for-pointed-paths})
    \begin{align*}
        \pv{X \colon t \mapsto \exp_2(t [e_1,e_2])}{p}^p &= \sup_{\mathcal{D}} \sum_{[a,b] \in \mathcal{D}} \left(\max\left\{0, \beta_p^{p/2} ((2/p)! \|(b-a)[e_1,e_2]\|_2)^{1/2}  \right\}\right)^p \\
        &= \beta_p^{p/2} \sup_{\mathcal{D}} \sum_{[a,b] \in \mathcal{D}} ((2/p)!)^{\frac{p}{2}} (b-a)^{p/2} 2^{p/4}
    \end{align*}
    and this is finite if and only if $p \geq 2$, where $\beta_p>0$ is some number only depending on $p$. Furthermore, the Lyons lift $Y$ of $X$ is then $Y\colon t \mapsto \exp(t[e_1,e_2])$, which is injective. Its signature is the element $S(X) = Y_1 = \exp([e_1,e_2])$ and the limiting object
    \begin{equation*}
        Z = ((1,0,\exp_2[e_1,e_2]),(1,0,\exp_2([e_1,e_2]),0)...) =: (g_k)_{k \in \N \backslash \{1\}}
    \end{equation*}
    can be mapped again to the signature $S(X)$ under the now corresponding $\Pi^2$ (or any $\Pi^p$ with $p \in [2,3)$). Hence, by \cref{ex:Blow-Up-CC-metric} the $\CC$-metrics have to be generalised to the $p$-$\CC$ metrics (ref.~\eqref{eq:CC-p-variation-norm} and \cref{lem:properties-of-dpCC}), so that one may apply again \cref{prop:Isomorphisms} (under technical assumptions provided in \cref{sec:Canonical-metric-relations-goals}) to gain the analogous correspondence between $D^p$ and $d^p$ in the case of $p$-variation. In this way, one should regard the pure area rough path above as the corresponding higher variation example for the diagram \cref{thm:Main-Theorem-Commutative-Diagram} below.
\end{example}

Together with the preceding discussion, this naturally raises the following questions:
\begin{itemize}
\item Can the construction of Le Donne and Züst be extended from the rectifiable setting ($p=1$) to weakly geometric rough paths ($p>1$) within the framework of \cite{BOEDIHARDJO_LYONS_2016720}?

\item How is the inverse-limit construction related to the $\mathbb R$-tree structure established in \cite[Proposition~4.1]{BOEDIHARDJO_LYONS_2016720}?

\item Can one modify the finite-level metric geometries so as to obtain a different inverse-limit topology, or is the resulting tree structure intrinsic to every construction compatible with the relevant path-lifting structure (cf.~\cite[Remark~8.20]{Schmeding_2022})?
\end{itemize}

The final question is motivated by the possibility that the $\mathbb R$-tree topology is an artifact of the chosen finite-level metrics rather than an intrinsic feature of the construction. If this were the case, an alternative compatible choice of metrics might induce a different inverse-limit topology under which the signature group becomes a topological group.

Our main result shows that all three questions admit a common answer, encapsulated by the following commutative diagram.

\begin{specialtheorem} \label{thm:Main-Theorem-Commutative-Diagram}
    Let $p \geq 1$ and let $V$ be a finite-dimensional real Banach space. Then the following commutative diagram holds in the category of pointed metric groups (see \cref{sec:Paths-over-pointed-metric-groups} and \cite[Sections~2--3]{LeDonneZuest_public}):
  \begin{equation} \label{eq:Diagram-p-variation-LZ}
            \begin{tikzcd}
            {\begin{tabular}{@{}l@{}}Reduced $p$-rough \\ path group\end{tabular}} \arrow[rr, "{\begin{tabular}{@{}l@{}}\textbf{Lyons lift}\end{tabular},\sim}"]\arrow[dd, "\sim"] \arrow[rrdd, "{\begin{tabular}{@{}l@{}} \textbf{Signature} \end{tabular},\sim}" description] & {} & {\begin{tabular}{@{}l@{}}Injective paths of \\ finite $p$-variation\end{tabular}} \arrow[dd, "{\begin{tabular}{@{}l@{}}
  \textbf{Terminal time} \\
  \textbf{evaluation}
\end{tabular},\sim}"] \\
            & \\
            {\begin{tabular}{@{}l@{}}Inverse limit of \\ nilpotent Lie groups \end{tabular}} \arrow[rr, "{\sim}"] & {} & {\begin{tabular}{@{}l@{}}$p$-Signature group\end{tabular}}
        \end{tikzcd}
    \end{equation}
\end{specialtheorem}

The relevant category and the objects appearing in the diagram are introduced in the subsequent sections. A fully precise version of Diagram~\eqref{eq:Diagram-p-variation-LZ}, using the notation of \cref{tab:Notation}, is given in \eqref{eq:Diagram-p-variation-LZ-precise}.

Diagram~\eqref{eq:Diagram-p-variation-LZ} combines the uniqueness results of \cite{HamblyLyonsUniqueness_2010,BOEDIHARDJO_LYONS_2016720}, represented by the upper-right triangle, with the inverse-limit construction of \cite{LeDonneZuest_public} (for the case $p = 1$), represented by the lower-left triangle.

The principal contributions of this paper are the following:
\begin{enumerate}
    \item We extend the inverse-limit construction of Le Donne--Züst from rectifiable paths to weakly geometric rough paths of finite \(p\)-variation for every \(p\ge1\).

    \item We investigate the emergence of the associated $\mathbb R$-tree from a topological perspective and relate it to the corresponding metric-geometric literature.

    \item We establish the corresponding approximation result \cite[Corollary 4.5]{LeDonneZuest_public} of Le Donne and Züst in finite $p$-variation for every $p \geq 1$.

    \item We determine the extent to which the construction depends on the choice of finite-level metrics, proving that the resulting tree structure is already intrinsic at the underlying topological level.
\end{enumerate}

As a consequence, the inverse-limit topology necessarily carries an intrinsic $\mathbb R$-tree structure. Combined with the \emph{\hyperref[thm:NoGo-Theorem]{No-Go Theorem~\ref*{thm:NoGo-Theorem}}} (\cite[Theorem~5.1]{LeDonneZuest_public}), this immediately yields the following corollary.

\begin{specialcorollary}[ref. \protect{\cref{cor:SGRtree}}]
Consider Diagram~\eqref{eq:Diagram-p-variation-LZ}, and assume that the underlying real Banach space has dimension at least two. Then the topology induced on the $p$-signature group (cf.~\eqref{eq:Signature-Group-Definition}) by the metric identifications in the diagram is that of a topological tree. Consequently, the $p$-signature group is not a topological group.
\end{specialcorollary}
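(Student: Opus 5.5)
The corollary follows by transporting the $\R$-tree structure from the path side of Diagram~\eqref{eq:Diagram-p-variation-LZ} to the signature group, and then invoking the No-Go Theorem~\ref*{thm:NoGo-Theorem}. I take \cref{thm:Main-Theorem-Commutative-Diagram} as given. It provides isometric group isomorphisms in the category of pointed metric groups between three objects: the reduced $p$-rough path group with metric $D^p$, the inverse limit of the nilpotent Lie groups with the limit metric $d^p$ built from the $p$-$\CC$ metrics, and the $p$-signature group.

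\textbf{Step 1 (the tree lives on the path side).} The tree-reduced path space with $D^p$ is an $\R$-tree. This is \cite[Proposition~4.1]{BOEDIHARDJO_LYONS_2016720}, or it can be seen directly as follows. Via the Lyons lift, points correspond to injective paths of finite $p$-variation starting at the identity. Geodesics are reparametrised initial segments of these paths. Two reduced paths share a maximal common initial segment, and the distance is the $p$-variation of the reduced concatenation $\rev{X}\star Y$. From this, $0$-hyperbolicity and geodesic connectedness follow. Since all arrows in the diagram are isometries, the signature group, with the metric transported along the signature map (equivalently the $d^p$ of the inverse limit), is an $\R$-tree. Consequently its topology is that of a topological tree, i.e.\ uniquely arcwise connected and locally arcwise connected. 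This takes care of the first assertion.

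\textbf{Step 2 (intrinsic nature).} To justify the phrase ``the topology induced by the metric identifications'', I must make sure the topology does not depend on the compatible choice of finite-level metrics. I would use the dependence result on the choice of metrics, to be proved later, which identifies the inverse-limit groups and their completions with the tree-reduced path group. Any two compatible choices are bi-Lipschitz at each level with constants uniform enough to pass to the limit, or at least they induce homeomorphic limits. The tree topology is therefore intrinsic.

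\textbf{Step 3 (not a topological group).} Assume $\dim V\ge 2$. The No-Go Theorem \cite[Theorem~5.1]{LeDonneZuest_public} states that a topological tree whose group structure makes it a topological group must be trivial or homeomorphic to $\R$, hence abelian in the relevant sense. So it suffices to show that the signature group is neither. Take $e_1,e_2$ linearly independent. The elements $\exp(te_1)$, $\exp(te_2)$ and $\exp(e_1)\exp(e_2)$ give at least three branches at the identity: the reduced paths along $e_1$, along $e_2$ and along $-e_1$ share no initial segments. This yields a branch point, which is impossible in $\R$. Alternatively, noncommutativity, $\exp(e_1)\exp(e_2)\ne\exp(e_2)\exp(e_1)$, already excludes the abelian case.

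The main obstacle I expect is Step 2. There I have to check that the hypotheses of the No-Go theorem, which concern the topology rather than a specific metric, are met for every admissible choice of metric. This requires showing that convergence in the limit metric does not depend on the choice. I would first try to obtain this by comparing, for each level $k$, the two $p$-$\CC$ metrics via uniform equivalence of homogeneous norms together with the $1$-Lipschitz property of the projections.
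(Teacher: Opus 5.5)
Your skeleton is the paper's: get a tree topology on the path side, transport it along the isometries of the diagram, then apply the No-Go theorem. However, Step~1 rests on a claim that is false for $p>1$. Neither $D^p$ nor $d^p$ is an $\R$-tree metric, because $p$-variation is only sub-additive along an arc, so neither $0$-hyperbolicity nor the geodesic property can be derived from it. (For $p=1$ your argument is fine; this is \cref{prop:Tree-metric-p=1-case}.)

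Here is a concrete failure. Take $p\in(1,2)$ and $V=\R^2$ with the Euclidean norm, so that $d_1=\rho^p_1=c\,\|\cdot\|$ with $c=\beta_p\,\Gamma(1/p+1)$. Let $X$ be the corner path of \cref{ex:Diagram-for-simple-path}. Its lift is injective and passes through $g=\exp(e_1)$, so it is the unique arc from $1$ to $h=\exp(e_1)\exp(e_2)$. Straight segments have $p$-variation equal to their length, so $d^p(1,g)=d^p(g,h)=c$. On the other hand, write $\Delta_i$ for the Euclidean increments of $X$ along any partition; using $|\Delta_i|\le\sqrt2$ and $\sum_i|\Delta_i|\le 2$ we get $\sum_i|\Delta_i|^p\le(\max_i|\Delta_i|)^{p-1}\sum_i|\Delta_i|\le 2^{(p+1)/2}<2^p$. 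Hence $d^p(1,h)<2c$. In an $\R$-tree distances add along the unique arc, so $(SG^p,d^p)$ is not one. Likewise, \cite[Proposition~4.1]{BOEDIHARDJO_LYONS_2016720} gives the $\R$-tree for the height-function metrics $D'$ or $\tilde D$, not for $D^p$.

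What you are missing is a purely topological argument, and you can supply it in one of two ways.
\begin{enumerate}
\item Prove directly that the $d^p$-topology is uniquely and locally arcwise connected, as in \cref{lem:Reformulation-Uniqueness-to-tree}. The non-trivial point is that \emph{every} $d^p$-continuous arc from $g$ to $h$ contains, and hence equals, the finite-variation arc; \cref{lem:Uniqueness-arcs-p-variation-limit} only covers the finite $p$-variation arcs. This is shown by chaining short injective pieces along the arc and loop-erasing. Then conclude with \cref{thm:Toptree-Rtree} and transport along \cref{lem:Image-of-ev1} and \cref{prop:Isomorphisms}.
\item Keep your citation and add that $D^p$ and $D'$ induce the same topology. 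This is \cref{lem:euivalent-topologies}, which plays sub-additivity of $p$-variation against super-additivity of the control.
\end{enumerate}

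Step~2 is not needed, since the statement concerns the fixed metrics of the diagram. As formulated it is also false. Homogeneous metrics on $G^k(V)$ are equivalent only with $k$-dependent constants. \cref{ex:counterexample} exhibits compatible families ($\rho_k$ versus $\sigma_k$) that are not uniformly comparable, so limit metrics and even the sets $G_{d\textnormal{-}p.r.c.}(V)$ may differ. The correct, narrower observation is that a $p$-$\CC$ family depends only on $d_{\floor{p}}$, so a bi-Lipschitz change at level $\floor{p}$ propagates with the same constants to every level.

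Step~3 is correct, and it supplies a detail the paper leaves implicit. The No-Go theorem only needs continuity of translations. The arcs $t\mapsto\exp(te_1)$, $t\mapsto\exp(te_2)$ and $t\mapsto\exp(-te_1)$ meet only at $1$, which is impossible at a point of $\R$. Since left translations are isometries, it is some right translation that fails to be continuous. Drop the ``abelian'' alternative: it would require further results (automatic continuity and the classification of one-dimensional groups) that you do not have at hand.
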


All morphisms appearing in Diagram~\eqref{eq:Diagram-p-variation-LZ} are explicit and introduced throughout the paper (see also \cref{tab:Notation} for a first exposition). Their identification as isomorphisms in the category of pointed metric groups is established in \cref{prop:Isomorphisms,prop:Kernel-signature-characterisation,lem:Image-of-ev1}. In particular, the diagram makes the persistence of the underlying tree topology transparent.

Furthermore, as in \cite{LeDonneZuest_public}, our construction yields an approximation theorem (ref.~\cref{cor:LeDonne-Cor}) for homogeneous metrics. It shows that every continuous path in the free nilpotent Lie group of step $\floor{p}$ can be approximated arbitrarily well by the projection of a Hölder geodesic with respect to $p$-variation (see \cref{def:p-geodesic}) in the $p$-variation geometry on $G^k(V)$, where $k\geq\floor{p}$. By interpolation, this further implies two approximation results for finite-variation paths; see \cref{cor:q-variation-approximation-via-p-geodesics,cor:cor:q-variation-approximation-via-1-geodesics}.

We thereby answer the three questions above by establishing two fundamental results.

First, we identify the weakly geometric signature group ($p \geq 1$) with an inverse limit of free nilpotent Lie groups endowed with Carnot--Carathéodory geometry. Using the topological characterization of $\mathbb R$-trees due to \cite{Mayer1990_topological_characterization_of_R_trees}, recalled in \cref{sec:Paths-over-pointed-metric-groups}, we prove that this inverse limit is itself a topological tree. \\

\paragraph{\textbf{Outline.}}

\cref{sec:Paths-over-pointed-metric-groups} introduces the fundamental concepts from topology, metric geometry, and $p$-variation that will be used throughout the remainder of the work. After briefly introducing the categories of interest, we study the construction of the reduced path group from a purely topological and metric-geometric perspective. In doing so, we highlight several obstacles that naturally lead us to rough path theory as the appropriate framework in which to understand the reduced path group. Readers familiar with $\R$-trees and $p$-variation may skip this section.

\cref{sec:Rough-Path-Theory} provides a survey of the necessary tools of rough path theory and establishes the right-hand side of the diagram \eqref{eq:Diagram-p-variation-LZ}. We introduce the signature group together with the main results of \cite{BOEDIHARDJO_LYONS_2016720}, while developing an abstract  $p$-variation version of \cite[Sections 2 and 3]{LeDonneZuest_public}, for which rough path theory provides both motivation and a natural application. In addition, we use the perspective developed in \cref{sec:Paths-over-pointed-metric-groups} to reinterpret several statements of \cite{BOEDIHARDJO_LYONS_2016720} from a metric-geometric viewpoint. This perspective makes the underlying tree topology particularly transparent. We conclude the section by deriving further consequences concerning the continuity of the signature with respect to the topologies considered in \cite{LeDonneZuest_public} and \cite{cass2024topologiesunparameterisedroughpath}.

\cref{sec:Canonical-metric-relations-goals} begins with the definition of the generalised $d_k^p\textnormal{-}\CC$ Carnot--Carathéodory-type metrics. We establish analogues of the results in \cite[Section 7]{Friz_Victoir_2010_fullbook}, which allow us to recover the results of \cite[Section 4]{LeDonneZuest_public}. These results not only establish the left-hand side of the diagram but also yield the corresponding approximation corollaries. Furthermore, we strengthen the connection between \cite{BOEDIHARDJO_LYONS_2016720} and \cite{LeDonneZuest_public} by discussing the $\R$-tree structures in light of the geometric perspective developed in \cref{sec:Paths-over-pointed-metric-groups}. We also provide, in this setting, an answer to the question raised in \cite[Remark 8.20]{Schmeding_2022}.

Finally, \cref{sec:Character-Groups} provides an exposition of the generalisation of the preceding results to character groups of suitable Hopf algebras. The previous arguments extend to this setting in a largely straightforward manner. In particular, since free nilpotent Lie groups of finite step are precisely the truncated character groups of the shuffle algebra, we obtain the same inverse system, together with its corresponding limit, as in the case of Carnot groups. This allows us to extend the diagram \eqref{eq:Diagram-p-variation-LZ} to this setting by replacing its objects accordingly.

\subsection{Notation}

We denote by $\N := \{1,2,3,...\}$ the set of natural numbers and set $\N_0 := \N \cup\{0\}$. For $p \in [1,\infty)$ let $\floor{p}$ be the largest $m \in \N$ such that $m\leq p$. We abbreviate $\N_p := \N \cap [{\floor{p}},\infty)$. To clarify, note that $\N_p$ is just a convenience as $\N$ is in bijection to $\N_p$ by the index shift $k \mapsto {\floor{p}} - 1 + k$ and hence one may at any point re-index during the discussion. Given two topological spaces $E$ and $F$, we consider their Cartesian product $E \times F$ to be equipped with the product topology and any $S \subseteq E\times F$ to be taken with its respective subspace topology unless stated otherwise. \\

\begin{notationtable}
  \notation{((G,e),d) , \ (G,d)}{Pointed metric group with abbreviation}{\hyperref[not:Metric-Group]{p.~\pageref*{not:Metric-Group}}}
  \notation{\sim}{Tree-like equivalence}{\cref{def:Tree-like-equivalence}}
  \notation{[\cdot]_\tau}{Equivalence notation (if $\sim$ is equivalence relation)}{}
  \notation{[o]_\tau}{Unit w.r.t.~$\sim$ with $o$ as the constant path at the underlying group unit}{\hyperref[not:RPG-unit]{p.~\pageref*{not:RPG-unit}}}
  \notation{X^\tau}{Tree-reduced path representative of $[X]_\tau$}{}
  \notation{\rho^p_k}{Max type metric}{\eqref{eq:Max-metric-for-any-p}}
  \notation{WG\Omega^p}{The set of weakly geometric $p$-rough paths w.r.t.~$\rho^p_k$}{\hyperref[not:Weakly-geometric-RP]{p.~\pageref*{not:Weakly-geometric-RP}}}
  \notation{\mathcal{C}^p}{Reduced path group \(WG\Omega^p/\sim\)}{}
  \notation{\delta^p}{Tree topology metric on \(\mathcal{C}^p\)}{}
  \notation{G_{\rho^p\textnormal{-}p.r.c.}}{Codomain of the signature under the choice of \(\rho^p=(\rho^p_k)_{k \in \N_p}\)}{Ref. \eqref{eq:Codomain-Exponential-decay}}
  \notation{\mathcal{I}^p}{Injective path group on \(G_{\rho^p\textnormal{-}p.r.c.}\)}{}
  \notation{[O]_\circlearrowleft}{Unit w.r.t.~loop erasure with $O$ as the constant path at unit of the group-like elements}{\cref{rem:Injectivity-reason}}
  \notation{D^p}{Tree topology metric on \(\cI^p\)}{\cref{lem:Image-of-ev1}}
  \notation{G^\infty_p(V)}{Inverse limit group of free real nilpotent groups}{\eqref{eq:Chain-for-any-p}}
  \notation{1_\infty}{Unit of the inverse limit group}{\eqref{eq:Group-Structure}}
  \notation{d_{\infty}^{p\textnormal{-}\CC}}{Inverse limit metric w.r.t.~the $p$-$\CC$ analogue}{\eqref{eq:Inverse-Limit-Metric}}
  \notation{SG^p}{Signature group for a fixed $p\geq 1$}{\eqref{eq:Signature-Group-Definition}}
  \notation{1}{(Group) Unit of the group-like elements}{\hyperref[not:Uni-Grouplike-Elements]{p.~\pageref*{not:Uni-Grouplike-Elements}}}
  \notation{d^p}{Tree metric on the signature group}{\eqref{eq:Inverse-Metric}}
\end{notationtable}

\section*{Acknowledgement}

Felix Medwed and Sylvie Paycha gratefully acknowledge funding by the Deutsche Forschungsgemeinschaft (DFG, German Research Foundation) – CRC/TRR 388 "Rough Analysis, Stochastic Dynamics and Related Fields" – Project ID 516748464. Felix Medwed thanks the NTNU IMF for hospitality during the author's research stay in November 2025. The authors would like to thank Peter K.~Friz for many discussions related to this work. Alexander Schmeding thanks Nikolas Tapia for enlightening comments on Hopf algebra valued rough paths.

\section{Pointed path space}

\label{sec:Paths-over-pointed-metric-groups}

We assume some familiarity with \cite{LeDonneZuest_public,Lyons1998,BOEDIHARDJO_LYONS_2016720} as well as \cite[Chapter 7]{Friz_Victoir_2010_fullbook} and \cite[Chapter 8]{Schmeding_2022}. Since the first goal includes the $p$-variation analogue of \cite[Section 2 and 3]{LeDonneZuest_public}, we provide a reminder of the necessary material following \cite[Chapter 5]{Friz_Victoir_2010_fullbook} in the abstract setting. \\

\paragraph{\textbf{Categorical setup.}} Let $(E,d)$ be a metric space.
Given a chosen point $e \in E$, termed \emph{basepoint}, we call $(E,e)$ a pointed space and $((E,e),d)$ a \emph{pointed metric space}. If $E$ carries a group structure, we write $G$ instead. 
For any $g$ and $h$ in $G$, we denote their product by $gh$ and define the left translation $L_g \colon h \mapsto gh$ and the right translation $R_g \colon h\mapsto hg$.
Since the group acts transitively on itself, without loss of generality, we choose the basepoint to be the identity $e \in G$ and we call $(G,e)$ a pointed group. We shall only consider pointed groups $(G,e)$, unless stated otherwise. We call a pointed group $G$ equipped with a metric $d$ for which $L_g$ is an isometry onto itself a \emph{pointed metric group} $(G,d)$ and $d$ is called \emph{left-invariant}\phantomsection\label{not:Metric-Group}. Since $L_g$ is an isometry the map $g \mapsto d(e,g)$ fully determines $d$ and we set $d(g,h) := d(e,g^{-1}h)$ (comp.~\cite[Remark 8.20]{Schmeding_2022}). Left-invariance further implies the symmetry $d(e,g^{-1}) = d(e,g)$.  
Similarly, replacing $d$ in the couple $((E,e),d)$ (resp.~$(G,d)$) with a general (not necessarily metrisable) topology $\mathcal{T}$, we define a \emph{pointed topological space} $((E,e),\mathcal{T})$ and \emph{pointed topological group} $(G,\mathcal{T})$ analogously. 
The group operations of a (pointed) topological group are taken to be continuous w.r.t.~this topology. \\

To define the categories of pointed metric spaces $\textbf{Met}_\star$, pointed topological spaces $\textbf{Top}_\star$, pointed metric groups $\textbf{MetGrp}_\star$ and pointed topological groups $\textbf{TopGrp}_\star$ it remains to specify the morphisms. For $\textbf{Met}_\star$ these shall be given as base point preserving\footnote{A map $\pi\colon (E,e) \to (F,f)$ between pointed spaces is base point preserving if $\pi(e)=f$.} $1$-Lipschitz maps, for $\textbf{Top}_\star$ the morphisms are basepoint preserving continuous maps, for $\textbf{MetGrp}_\star$ the morphisms are $1$-Lipschitz group homomorphisms and for $\textbf{TopGrp}_\star$ the morphisms are continuous group homomorphisms. Recall that group homomorphisms automatically preserve the identity and hence the base point. Note that the identity map for each object and composition of morphisms exist and are well-defined. \\
Recalling that isometries are morphisms of a category which have a left and right inverse w.r.t.~the composition of morphisms, they are given for $\textbf{MetGrp}_\star$ by surjective isometries which are also group homomorphisms. \\

Recall that a metric space $(E,d)$ is
\begin{enumerate}
    \item \emph{geodesic} if for every two points $x,y \in E$ there exists a function $X \colon [0,1] \to E$ with $X_0 = x$ and $X_1 = y$ termed a \emph{geodesic}, such that $d(X_s,X_t) = |t-s| d(x,y)$ for any $s,t \in [0,1]$,
    \item \emph{proper} if all closed balls
\begin{equation*}
    \bar{B}_d(x,r) = \{ y \in E \mid d(x,y) \leq r \}
\end{equation*}
in $E$ are compact.
\end{enumerate}
Any proper metric space is complete, that is, all Cauchy sequences converge. \\
Consider the functor $\mathcal{F} \colon\textbf{Met}_\star \to \textbf{Top}_\star$ which assigns to each pointed metric space the pointed topological space given by the same set and basepoint together with the induced open ball topology. Then the obtained topological spaces under $\mathcal{F}$ are by construction metrisable Hausdorff topological spaces. We shall abbreviate the application of $\mathcal{F}$ to a (pointed) metric space by saying the (pointed) metric space is \emph{considered} as a topological space. One might intuitively expect the existence of a functor $\textbf{MetGrp}_\star \to \textbf{TopGrp}_\star$ such that one may consider any pointed metric group as a pointed topological group. However, the NoGo-Theorem (\cite[Theorem 5.1]{LeDonneZuest_public} or see \cref{thm:NoGo-Theorem} later on) shows that such a functor cannot exist. In other words, the group operations of a metric group do not need to be continuous w.r.t.~the open ball topology if the underlying metric space is considered as a topological space. \\

\paragraph{\textbf{Topological preliminaries.}} Let $E$ be a topological space. We call any continuous function $X \colon [0,1] \to E$ a path. If  $X$ is  a topological embedding, then $X$ is termed an \emph{arc}. 
Throughout, we shall loosely refer to $X$ itself or its image as an arc, which corresponds to 
consider parametrised or unparametrised paths. 
A topological space $E$ is \emph{pathwise} (resp.~\emph{arcwise}) \emph{connected} if for any two distinct points $x,y$ in $E$ there exists a path (resp.~arc) $X \colon [0,1]\to E$ with $X_0 = x$ and $X_1 = y$.
\begin{theorem}[comp.~\protect{\cite{borger1992make,Brazas_2024}}] \label{thm:loop-erasure}
    Let $E$ be a Hausdorff space. If $E$ is pathwise connected, then $E$ is arcwise connected. In fact, for any continuous $X \colon [0,1] \to E$ with $X_0 \neq X_1$, there exist a closed subset $A \subset [0,1]$, a continuous and order-preserving map $q \colon [0,1] \to [0,1]$, and an injective continuous map $Y \colon [0,1] \to E$ with the following properties:
    \begin{enumerate}[label = (\roman*)]
        \item $Y_0 = X_0, Y_1 = X_1.$ 
        \item $Y \circ q|_A = X|_A$.
        \item $q|_A \colon A \to [0,1]$ is surjective.
    \end{enumerate}
\end{theorem}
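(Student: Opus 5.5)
We need a proof plan for the loop-erasure theorem: in a Hausdorff space, any path $X$ with $X_0 \neq X_1$ can be erased to an arc $Y$ together with data $A$ and $q$.

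The approach is the classical one of Börger, and it proceeds in four steps.

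\textbf{Step 1: reduce to the image.} The image $K := X([0,1])$ is compact, Hausdorff and metrisable-like (a Peano continuum, by Hahn--Mazurkiewicz, once we know it is metrisable). In fact we avoid metrisability altogether and work on the parameter interval.

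\textbf{Step 2: erase maximal loops.} Call an open interval $(a,b) \subset [0,1]$ \emph{removable} if $X_a = X_b$. Using Zorn's lemma, or a transfinite/greedy argument, we choose a family $\mathcal{U}$ of pairwise disjoint removable open intervals with two properties.
\begin{enumerate}
    \item It is maximal.
    \item It is "closed": whenever $s<t$ lie in $A := [0,1] \setminus \bigcup \mathcal{U}$ and $X_s = X_t$, then $s = t$.
\end{enumerate}
The concrete construction is Börger's. Order the candidate families by a suitable inclusion in which a larger interval may absorb smaller ones. Check that chains have upper bounds: take unions of intervals and merge overlapping ones. Here Hausdorffness is used to show that the endpoint identifications $X_a = X_b$ pass to limits, since $X$ is continuous and limits are unique. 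This gives a family such that $X|_A$ is injective modulo collapsing the closures of the removed intervals. The endpoints $0$ and $1$ are never removed, because $X_0 \neq X_1$.

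\textbf{Step 3: build $q$ and $Y$.} Let $q\colon [0,1] \to [0,1]$ be continuous, order-preserving and surjective, constant on each closure $\bar I$ for $I \in \mathcal{U}$, and strictly increasing on $A$ modulo these collapses. To construct it, use a Cantor-function-type construction. Put a measure $\mu$ on $[0,1]$ that gives weight to $A$ only, for example Lebesgue measure restricted to $A$ plus a diffuse measure on the part of $A$ of measure zero. Then set $q(t) := \mu[0,t]/\mu[0,1]$.

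A simpler route avoids the measure. Identify $[0,1]$ modulo the relation collapsing each $\bar I$; the quotient is a compact, connected, linearly ordered, separable space with endpoints, and so it is an arc. Let $q$ be the quotient map composed with the homeomorphism onto $[0,1]$. Then define $Y$ on $[0,1]$ by $Y(q(t)) := X(t)$. This is well defined, because $X$ is constant across the collapsed closures: the two endpoints of each $\bar I$ have equal $X$-values. By construction the map is well defined from the quotient, which yields properties (ii) and (iii).

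\textbf{Step 4: continuity and injectivity.} Continuity of $Y$ follows because $q$ is a closed quotient map and $X$ factors through it. Injectivity follows from the closedness property in Step 2. Finally, $Y_0 = X_0$ and $Y_1 = X_1$.

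The main obstacle is Step 2: producing a family of removed intervals that is simultaneously disjoint and leaves no residual coincidences $X_s = X_t$ with $s \neq t$ in $A$. Naive greedy removal can fail, because nested or accumulating loops interact.

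To handle this, I would first try the following order-theoretic argument. Define the relation $s \approx t$ if $X_s = X_t$ and $X$ on $[s,t]$ is "eventually reducible". Then take the equivalence relation generated by maximal coincidence intervals: define $\phi(s) := \max\{t : X_t = X_s\}$, which exists by compactness of the fibre, a closed set by Hausdorffness. Iterate $s \mapsto \phi(s)$ from $0$, transfinitely, jumping forward through loops. The set $A$ of visited times is closed, and the jumped-over intervals form $\mathcal{U}$. Injectivity on $A$ holds because each jump goes to the last visit of that point. Proving that $A$ is closed, and that the limit stages are handled consistently, is the technical core.
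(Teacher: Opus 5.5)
The paper gives no proof of this theorem; it only cites Börger and Brazas. Their argument is the Zorn's-lemma scheme you outline: a minimal closed $A\ni 0,1$ whose complementary intervals $(a,b)$ all satisfy $X_a=X_b$, i.e.\ your maximal family $\mathcal{U}$. Your write-up has a genuine gap exactly at the step you call the main obstacle.

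\textbf{Property (2) is misstated and never proved.} As stated, (2) is impossible once $\mathcal{U}\neq\varnothing$: for $(a,b)\in\mathcal{U}$ both $a,b\in A$ and $X_a=X_b$. What you need is: if $s<t$ lie in $A$ and $X_s=X_t$, then $(s,t)\in\mathcal{U}$. This follows from maximality in a few lines, ordering families by inclusion of $\bigcup\mathcal{U}$.
\begin{itemize}
\item Since $s,t\notin\bigcup\mathcal{U}$, every member of $\mathcal{U}$ meeting $(s,t)$ lies inside $(s,t)$.
\item Hence $\mathcal{U}':=\{I\in\mathcal{U}: I\cap(s,t)=\varnothing\}\cup\{(s,t)\}$ is again an admissible family, with $\bigcup\mathcal{U}'=\bigcup\mathcal{U}\cup(s,t)$.
\item Maximality gives $(s,t)\subseteq\bigcup\mathcal{U}$, and connectedness forces $(s,t)\in\mathcal{U}$.
\end{itemize}
Applied to adjacent members $(a,b),(b,c)$, the same argument shows that the closures of members of $\mathcal{U}$ are pairwise disjoint. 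Your quotient in Step 3 tacitly needs this. Note that $X_0\neq X_1$ only rules out $(0,1)\in\mathcal{U}$; it is this disjointness that keeps $0$ and $1$ in different classes.

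\textbf{The fallback does not work.} $\phi$ is idempotent, $\phi(\phi(s))=\phi(s)$, so iterating from $0$ stalls at $\phi(0)$. In a continuum there is no ``next'' time to jump from. The obvious non-iterative substitute, the last-visit set $\{t:\phi(t)=t\}$, does make $X$ injective, but it is the wrong set. For a segment, then a loop based at its endpoint, then another segment, it contains the interior times of the loop and is not closed.

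\textbf{A second, smaller error in Steps 3--4.} $X$ is not constant on a collapsed $\bar I$: only its endpoint values agree, while $X$ runs around the loop inside. So $X$ does not factor through $q$, and $Y$ must be defined by $Y(q(t)):=X(t)$ for $t\in A$ only.
\begin{itemize}
\item This is well defined by the corrected (2).
\item It is defined on all of $[0,1]$ because every fibre of $q$ meets $A$; this is also (iii).
\item It is continuous because $q|_A\colon A\to[0,1]$ is a continuous surjection from a compact space onto a Hausdorff one. Hence it is a closed quotient map, and $Y\circ q|_A=X|_A$ is continuous.
\end{itemize}
Injectivity of $Y$ is then exactly the corrected (2). $Y$ is an arc because an injective continuous map from $[0,1]$ into a Hausdorff space is an embedding.

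With these repairs, plus your chain argument, the Zorn route is complete and the $\phi$-iteration can be dropped. In the chain argument, each component of the union over a chain is an increasing union of components, by compactness of closed subintervals, and Hausdorffness passes $X_{a_i}=X_{b_i}$ to the limit.
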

Thus, in a Hausdorff space, a path with distinct endpoints always contains an injective representative with distinct endpoints. This can be achieved by erasing loops, ref.~\cite{Brazas_2024}, however the resulting arc is not unique in general.
\begin{remark}
    To talk about injective paths between two points we need to assert that the latter are distinct. 
    However, without loss of generality we can relax  this requirement by assigning to the case $X_0=X_1$ the constant path $t \mapsto X_0$ which we refer to as the trivial case.
\end{remark}

\paragraph{\textbf{Topological trees}}. A topological space $E$ can further be equipped with special path properties:
\begin{itemize}
    \item If for any two distinct points the connecting arc is unique, then we term $E$ \emph{uniquely arcwise connected} (UAC) and denote the image of the arc connecting $x,y \in E$ by $[x,y]$.
    \item If for any $x$ in $E$ and neighbourhood $U$ of $x$ there exists an arcwise connected open neighbourhood $V$ contained in $U$, then $E$ is called \emph{locally arcwise connected} (LAC).
\end{itemize}
A \emph{topological tree} is a metrisable topological space that is UAC and LAC (ref. \cite[Section 4]{LeDonneZuest_public}). We call an $\mathbb R$-tree a geodesic metric space that, considered as a topological space, is UAC\footnote{It is common to strengthen the definition of arc in the metric setting to mean an isometric map $X \colon [0,1] \to E$. However, we will refrain from doing so and add the extra geodesic requirement as in \cite{LeDonneZuest_public}.}.

\begin{theorem}[comp.~\protect{\cite[Theorem 5.1]{Mayer1990_topological_characterization_of_R_trees}}] \label{thm:Toptree-Rtree}
    A topological space is a topological tree \textit{iff} it is homeomorphic to an $\R$-tree.
\end{theorem}
\begin{theorem}[NoGo-Theorem; comp.~\protect{\cite[Theorem 5.1]{LeDonneZuest_public}}] \label{thm:NoGo-Theorem}
    A topological tree carrying a group structure, such that all left and right translations are continuous, and containing more than one point is homeomorphic to $\R$.
\end{theorem}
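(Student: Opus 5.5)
By \cref{thm:Toptree-Rtree} we may assume that the underlying space is a genuine $\R$-tree $(T,d)$ carrying a group structure in which every left translation $L_g$ and every right translation $R_g$ is continuous. These maps are bijective with continuous inverses $L_{g^{-1}}$ and $R_{g^{-1}}$, so they are homeomorphisms of $T$. In particular the group $T$ acts transitively by homeomorphisms on itself, so $T$ is topologically homogeneous. The plan is to exploit this homogeneity against the rigid local structure of $\R$-trees. Step one: $T$ contains an arc, since it has more than one point and is arcwise connected. Step two: show that $T$ has no branch points. The valence of a point $x$ (the number of components of $T\setminus\{x\}$) is a topological invariant, and homeomorphisms preserve it. By homogeneity every point therefore has the same valence. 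Interior points of an arc have valence at least two, so every point has valence $\ge 2$; in particular $T$ has no endpoints.

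Step three, the main obstacle: rule out that every point has valence $\ge 3$. In a separable $\R$-tree the set of branch points is countable, while the arc contains uncountably many points, so in that case uniform valence $\ge3$ is impossible. I do not want to rely on separability, though. Instead I would fix an arc $[a,b]$ and use the fact that the number of components of $T\setminus\{x\}$ meeting $T\setminus[a,b]$ is controlled. Consider the first-point (nearest-point) projection $\pi\colon T\to[a,b]$. For each $x\in(a,b)$ with valence $\ge3$, the fibre $\pi^{-1}(x)\setminus\{x\}$ is a nonempty open set. These fibres are pairwise disjoint for distinct $x$. If separability is unavailable, I would try a different route: a homogeneous tree of constant valence $\ge3$ where even small neighbourhoods of every point branch yields a non-locally-compact structure. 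I would then contradict this using local arcwise connectedness together with a continuity argument for the group multiplication restricted to the arc. I expect this branching exclusion to be the delicate step, and I would first attempt the separability route, justifying separability via metrisability plus the group action if possible.

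Step four: once every point has valence exactly $2$, $T$ is an $\R$-tree without branch points and without endpoints. Hence it is a geodesic line: the union of an increasing chain of geodesic segments extending in both directions, with no end reached. It is therefore isometric to an interval without endpoints, which is isometric to $\R$ or homeomorphic to an open interval. Either way $T$ is homeomorphic to $\R$. This completes the plan for \cref{thm:NoGo-Theorem}.
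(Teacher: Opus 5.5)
\textbf{There is a genuine gap in Step 3.} The paper does not prove this statement itself; it imports it from Le Donne--Z\"ust. Your Steps 1, 2 and 4 are sound. Step 3, however, is the whole content of the theorem, you leave it open, and none of the routes you sketch can close it.

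Up to that point you only use topological homogeneity, and the left translations alone already provide that. But the statement is false if only left translations are assumed continuous. By \cref{lem:Reformulation-Uniqueness-to-tree} and \cref{cor:SGRtree}, the signature group $(SG^p,d^p)$ with $\dim V\geq 2$ is a topological tree on which left translations are isometries. In it, $1$ is a branch point: the arcs $t\mapsto\exp(te_1)$, $t\mapsto\exp(-te_1)$, $t\mapsto\exp(te_2)$ pairwise meet only in $1$. Hence every point is a branch point. So this example is
\begin{itemize}
\item homogeneous with constant valence $\geq 3$,
\item metrisable and LAC,
\item not locally compact,
\item and, by your own median-counting argument, not separable.
\end{itemize}
Consequently:
\begin{itemize}
\item separability cannot be extracted from metrisability plus a transitive action;
\item the uncountable family of disjoint open fibres $\pi^{-1}(x)\setminus\{x\}$ contradicts nothing without separability;
\item non-local-compactness is no obstruction.
\end{itemize}
Your fallback, a ``continuity argument for the group multiplication restricted to the arc'', is also unavailable: the hypotheses only give separate continuity of $(g,h)\mapsto gh$.

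\textbf{What is missing} is an argument in which continuity of the right translations enters essentially, for instance through the fact that every $R_h$ commutes with every $L_g$. One route:
\begin{enumerate}
\item Some $g\neq 1$ has $g$ and $g^{-1}$ in different components of $T\setminus\{1\}$; there are at least two components by your Step 2. Suppose not. Then every component is closed under inversion. Take $g\in D_1$, $h\in D_2$ with $D_1\neq D_2$, so $1\in[g,h]$. Applying $L_{g^{-1}}$ gives $g^{-1}h\in D_1$, and applying $L_{h^{-1}}$ gives $h^{-1}g\in D_2$. This contradicts $h^{-1}g=(g^{-1}h)^{-1}$.
\item From $1\in[g^{-1},g]$ you get $g^{n+1}\in[g^n,g^{n+2}]$ for all $n\in\mathbb Z$. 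So $\ell=\bigcup_{n}[g^n,g^{n+1}]$ is an $L_g$-invariant line. It is closed, since a finite end would be a limit $z=\lim g^n$ with $gz=z$, forcing $g=1$.
\item This is the unique closed $L_g$-invariant line. The intersection of two such lines is closed, convex and $L_g$-invariant. If the lines are distinct, the intersection is empty (then there is a unique bridge), a point, a compact segment, or a ray. Each case yields a fixed point of $L_g$: the foot of the bridge, the point itself, the intermediate value theorem, or the endpoint of the ray (since $L_g|_\ell$ is an increasing fixed-point-free homeomorphism of $\ell\cong\R$).
\item Each $R_h$ is a homeomorphism commuting with $L_g$, so $R_h(\ell)$ is again a closed $L_g$-invariant line. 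Hence $R_h(\ell)=\ell$ for all $h$, and transitivity of the right translations gives $\ell=T\cong\R$.
\end{enumerate}
In the signature group, steps (i)--(iii) go through with $g=\exp(e_1)$, and it is exactly (iv) that fails: for $h=\exp(e_2)$ the set $\ell h$ is discrete.
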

\begin{remark}
   The NoGo-Theorem shows that the metric-topology assignment does not in general define a functor $\mathbf{MetGrp}_\star\longrightarrow \mathbf{TopGrp}_\star.$
\end{remark}
A \emph{rooted topological tree (resp.~$\mathbb R$-tree)} is any topological tree (resp.~$\R$-tree) $E$ with a chosen basepoint $e \in E$, called the \emph{root}, which we take to be the basepoint for any pointed space. We define the branch point $x\land_e y$ w.r.t.~the root by $[e,x] \cap [e,y] = [e,x\land_e y]$ which is well defined by the UAC property. Let $X \colon [0,1] \to E$ be a homeomorphism with image $[e,x]$ and $Y\colon [0,1] \to E$ a homeomorphism with image $[e,y]$, then similarly we define the corresponding \emph{branch arc}, written as $X \land_e Y$ as a homeomorphism with image $[e,x \land_e y]$.

\begin{theorem}[comp.~\protect{\cite[Theorem 5.1]{Mayer1990_topological_characterization_of_R_trees}}] \label{thm:Height-Function-existence-Rtree-Toptree}
    Let $E$ be a rooted topological tree with root $e$. Then there exists a function
    \begin{equation*}
        H_e\colon E\to [0,\infty),
        \qquad H_e(e)=0,
    \end{equation*}
    which is continuous and strictly increasing along every non-trivial branch issuing from $e$, such that
    \begin{equation*}
        d_{H_e}(x,y) := H_e(x)+H_e(y)-2H_e(x\land_e y)
    \end{equation*}
    defines a metric compatible with the topology of $E$, and $((E,e),d_{H_e})$ is a rooted $\R$-tree.
    We call $H_e$ a \emph{height function} with respect to $e$.
\end{theorem}
If  $E$ carries a group structure, such that $(E,d)$ is a pointed metric group, we simply write $H$ for the height function instead as the basepoint is canonically chosen.
\begin{remark}
    \begin{enumerate}
        \item The choice of the height function is not unique.
        \item The unit interval $[0,1]$ with its metric induced by the absolute value is an $\R$-tree, the trivial $\R$-tree.
    \end{enumerate}
\end{remark}

\paragraph{\textbf{Operations on marked paths.}} Let $((E,e),d)$ be a pointed metric space. If a continuous path $X \colon [0,1] \to E$ starts at the basepoint, i.e.~$X_0=e$, then $X$ is referred to as a \emph{pointed path}. We write $\mathcal P_e(E)$ for the set of all pointed paths in $E$, and simply write $\mathcal P_e$ if the target space is clear from the context. Let $(G,d)$ be a pointed metric group with identity $e$. Then for $X,Y \in \mathcal{P}_e(G)$ we define the path concatenation
\begin{equation*} 
    (X\sqcup Y)_t := \left\{\begin{array}{cc}
        X_{2t}, & t\in[0,1/2] \\
        L_{X_{1}}(Y_{2t-1}), & t \in [1/2,1]
    \end{array}\right.
\end{equation*}
and the path reversal by
\begin{equation*}
    \rev{X}_t := L_{(X_{1})^{-1}}( X_{1-t}).
\end{equation*}
The paths $X \sqcup Y$ and $\rev{X}$ above are again elements of $\mathcal{P}_e(G)$. Note, that the start- and endpoint gluing in the definition of $\sqcup$ of pointed paths demands the existence of a left translation to be well defined between any two pointed paths.\\
Then for any $X,Y,Z\in \mathcal P_e(G)$ we can compute
\begin{equation*}
    \rev{\rev{X}} = X,
    \qquad
    \overleftarrow{X\sqcup Y}=\rev{Y}\sqcup \rev{X}.
\end{equation*}
Furthermore, concatenation fulfils
\begin{equation} \label{eq:Associativity-Failure}
    (X\sqcup Y)\sqcup Z
    =
    \bigl(X\sqcup (Y\sqcup Z)\bigr)\circ \sigma,
\end{equation}
where $\sigma\colon [0,1]\to [0,1]$ is an \emph{ordered reparametrisation}, which is a continuous surjective order-preserving function. A possible instance fulfilling \eqref{eq:Associativity-Failure} is
\begin{equation*}
    \sigma(t)
    :=
    \begin{cases}
        2t, & t\in [0,1/4],\\
        t+1/4, & t\in [1/4,1/2],\\
        t/2+1/2, & t\in [1/2,1]
    \end{cases}
\end{equation*}
which measures the failure of associativity. \\

\paragraph{\textbf{Tree-like equivalence.}}
Due to the algebraic properties of $\sqcup$ and $\rev{\cdot}$, a natural question is if there exists a substructure, i.e.~a quotient of $\mathcal{P}_e$ with corresponding congruence relation $\sim$, such that $\sim$ is an equivalence relation on the set of pointed paths and the induced concatenation and reversal constitute genuine group operations. Supposing such a $\sim$ exists, let us denote the corresponding equivalence classes by $[X]$ and the identity by $[o]$, then $[X] = [Y]$ for any two paths $X,Y \in \mathcal{P}_e$ implies $[\rev{X} \sqcup Y] = [o]$ up to the choice of left and right multiplication. After observing that for any arc $X$, the path $\rev{X} \sqcup X$ is traversed first along $\rev{X}$ and then back along $X$ to $e$, notice that $[o]$ has to be the equivalence class of the constant path $o \colon t \mapsto e$. In addition, we can factor $\rev{X} \sqcup X = \psi \circ \phi$ through the trivial $\R$-tree. This observation  can be formalised as follows: \phantomsection\label{not:RPG-unit}
\begin{definition}[Comp. \protect{\cite[Definition 1.1]{BOEDIHARDJO_LYONS_2016720}}] \label{def:Tree-like-equivalence}
    A pointed path $X$ is called \emph{tree-like} if there exists an $\R$-tree $\tau$, a continuous map $\phi\colon [0,1] \to \tau$ with $\phi(0) = \phi(1)$ and a map $\psi\colon \tau \to G$, such that $X = \psi \circ \phi$. Then if $X$ and $Y$ are two pointed paths, we define the relation
    \begin{equation}
        X \sim Y : \Leftrightarrow \rev{X} \sqcup Y \textrm{ is tree-like}.
    \end{equation}
    We call any $X$ and $Y$ fulfilling the relation $\sim$ \emph{tree-like equivalent}. A path $X$ is \emph{tree-reduced} if there exists no subinterval $[s,t] \subseteq [0,1]$ s.t.~$X|_{[s,t]}$ is tree-like.
\end{definition}
By the arguments found in \cite{HamblyLyonsUniqueness_2010} and \cite{lee2020pathsignaturesliegroups} it is immediate by definition of tree-like equivalence that $X \sim X$ and $X \sim Y \Leftrightarrow Y \sim X$ hold. In addition, $X \sim X \circ \sigma$ is true for any ordered reparametrisation $\sigma$ and hence the induced concatenation on the quotient will be associative. Furthermore, if $\sim$ exists as an equivalence relation on $\mathcal{P}_e$, the induced operations are well-defined on the quotient and $\sim$ therefore defines a congruence relation with respect to the induced path concatenation and reversal. Hence, the only remaining issue is to check the transitivity property of the relation (ref.~\cite{HamblyLyonsUniqueness_2010}).
\begin{counterexample} \label{counterexample:Regularity-of-paths-excludes-tree-like-insertions}
    The counterexample constructed in \cite{brazas2026treeliketransitiverelationpaths} over the group $(\R^2,+)$ shows by a limit argument that $\sim$ defined as in \cref{def:Tree-like-equivalence} is not transitive. This is achieved by constructing (pointed) paths $X$ and $Z$ that form the edges of a isosceles right triangle, where $X$ is the path along the adjacent and opposite of the hypotenuses and $Z$ follows the hypotenuse. Both points start from a corner attached to the hypotenuse with the same orientation. Under successive tree-like insertions\footnote{Let $X\colon [0,1] \to (G,d)$ be a continuous path. A $C$-tree-like insertion at $t \in [0,1]$ is $X \sim A \sqcup B \mapsto (A \sqcup (\rev{C} \sqcup C)) \sqcup B$ where $\rev{C} \sqcup C$ is a tree-like piece. Here, $A,B \colon [0,1] \to (G,d)$ are be taken to be $A\colon s \mapsto X_{st}$ and $B\colon s \mapsto X_{t + (1-t) s}$. Due to tree-like equivalence the concatenation with $C$ order does not matter.} into $X$ and $Z$, {\color{purple} one can} construct paths $Y^{X,{(n)}}$ and $Y^{Z,(n)}$ which factor through an $\R$-tree and agree in the appropriate limit, giving a continuous pointed path $Y$. By construction one then gets $X \sim Y$ and $Y \sim Z$, but $X \nsim Z$ making the transitivity property over pointed paths in $(\R^2,+)$ false.
\end{counterexample}
The counterexample exploits (countably) infinitely many tree-like insertions, which are untracked if one only imposes the continuity property on pointed paths. 
Therefore, an obvious way to exclude counterexamples of this kind is to consider a subset of $\mathcal{P}_e$ subject to regularity properties, which are able to quantify and ultimately prevent such counterexamples from being constructed. One such property is the roughness of paths measured in $p$-variation in the sense of Wiener. \\

\paragraph{\textbf{$p$-Variation of paths.}} Following \cite[Chapter 5]{Friz_Victoir_2010_fullbook}, let $X$ be a (pointed) path with values in a metric space $(E,d)$ and fix a $p \geq 1$ throughout, then the \emph{$p$-variation w.r.t.~$d$} restricted to $[s,t] \subseteq [0,1]$ is defined by
\begin{equation} \label{eq:General-p-variation-for-pointed-paths}
    \pvdi{X}{p}{d}{[s,t]} := \pvd{X|_{[s,t]}}{p}{d} := \left( \sup_{\cD: s = s_0 < s_1 < ... < s_n = t} \sum_{j = 1}^{n-1} d(X_{s_{j-1}},X_{s_{j}})^p \right)^{1/p}.
\end{equation}
Here $\mathcal{D}$ is any partition of the interval $[s,t]$ into subintervals $[s_i,s_{i+1}]$.
Should $[s,t] = [0,1]$ hold, then we simply write $\pvd{X}{p}{d}$ and if $d$ is clear from context, we will drop it. Should $\pvd{X}{p}{d} < \infty$, then we say that $X$ is of \emph{finite $p$-variation w.r.t.~$d$} or of finite $p$-$d$-variation. If $p=1$ we say that $X$ is \emph{rectifiable} or of \emph{bounded variation}. Due to \cite[Proposition 5.9]{Friz_Victoir_2010_fullbook},  replacing $d(X_{s_{j-1}},X_{s_j})$ in \eqref{eq:General-p-variation-for-pointed-paths} with $\pvdi{X}{p}{d}{[s_{j-1},s_j]}$ for a path $X$ of finite $p$-$d$-variation will result again in $\pvdi{X}{d}{p}{[s,t]}$ for any $[s,t] \subseteq [0,1]$. 
Furthermore, for any $s,u,t \in [0,1]$ with $s \leq u \leq t$, we have by \cite[Proposition 5.8]{Friz_Victoir_2010_fullbook}
\begin{equation} \label{eq:p-Variation-inequalities-super}
    \pvdi{X}{p}{d}{[s,t]}^p \geq \pvdi{X}{p}{d}{[s,u]}^p + \pvdi{X}{p}{d}{[u,t]}^p,
\end{equation}
making the map $(s,t) \mapsto \pvdi{X}{p}{d}{[s,t]}^p$ super-additive. On the contrary, by the Minkowski-inequality
\begin{equation} \label{eq:p-Variation-inequalities-sub}
    \pvdi{X}{p}{d}{[s,t]} \leq \pvdi{X}{p}{d}{[s,u]} + \pvdi{X}{p}{d}{[u,t]}
\end{equation}
and therefore $(s,t) \mapsto \pvdi{X}{p}{d}{[s,t]}$ is sub-additive.
In the case $p=1$, the above inequalities \eqref{eq:p-Variation-inequalities-super} and \eqref{eq:p-Variation-inequalities-sub} coincide and become equalities.\\
Taking any $0 \leq s \leq u \leq t \leq w \leq 1$, it holds that
\begin{equation*}
    \pvdi{X}{p}{d}{[u,t]} \leq \pvdi{X}{p}{d}{[s,w]}.
\end{equation*}
and the function $t \mapsto \pvdi{X}{p}{d}{[s,t]}$ is continuous and monotonically increasing.
Moreover, by \cite[Lemma 5.12]{Friz_Victoir_2010_fullbook} the map $X \mapsto \pvdi{X}{p}{d}{[s,t]}$ is lower semi-continuous w.r.t.~pointwise convergence in $d$. \\
The set of all pointed paths $X$ into a pointed metric group $((E,e),d)$ of finite $p$-$d$-variation will be denoted by $C^{p\textnormal{-}\var}_d([0,1],E) \subset \mathcal{P}_e(E)$. 
\begin{remark}
    Recall the setting of \cref{counterexample:Regularity-of-paths-excludes-tree-like-insertions} in \cite{brazas2026treeliketransitiverelationpaths} and suppose that $Y^{Z,(n)}$ is the $n$th iteration of the path constructed for the hypotenuse $Y^{Z,(0)} := Z$ with initial $p$-variation $\pvd{Z}{p}{} =: \ell > 0$. At each step $n$ of the construction of the $\R$-tree for $Y^{Z,(n)}$ and for each arc with length $\ell_n$, one inserts a new vertex at half the length $\ell_{n+1}=\ell_n/2$, dividing each arc into two arcs of half the previous length. Then for any new inserted vertex, one attaches two new arcs of length $\ell_{n+1}$ ending at two new vertices. In this way an arc of length $\ell_n$ becomes $4$ new arcs, each of length $\ell_{n+1}=\ell_n/2$. \\
    Hence, we can for $p < 2$ conclude, by picking the partition $\{t_1,...,t_m\}$ such that each $Y_{t_j}^{Z,(n)}$ is a corner point, that
    \begin{equation*}
         \pv{Y^{Z,(n)}}{p}^p \geq 4^n \left(\frac{\ell}{2^n} \right)^p = \ell^p 2^{(2-p)n}\rightarrow \infty, \quad \text{ for } n \rightarrow \infty
    \end{equation*}
    since the sequence of the lower bound is monotonically increasing. To treat the case $p \geq 2$, we first notice that $[0,1]$ is divided into $4^n$ intervals $I_{n,k}, k\in [[1, 4^n]]$ of length $4^{-n}$ and we note that $\operatorname{diam}(Y(I_{n,k})) \leq \ell 2^{-n}$ by construction. Take any $0 \leq s < t \leq 1$ and choose $n$ such that $4^{-(n+1)} < |t-s| \le 4^{-n}$. This gives rise to three subsets $I_{n,k'}$, $I_{n,k'+1}$ and $I_{n,k'+2}$, such that $[s,t] \subseteq I_{n,k'} \cup I_{n,k'+1} \cup I_{n,k'+2}$. Thus, $\|Y_t - Y_s\| \leq \sum_{a = 0}^2 \operatorname{diam}(Y(I_{n,k'+ a}))$ and $\|Y_t - Y_s\| \leq 3 \ell 2^{-n} < 6 \ell |t-s|^{1/2}$ by the choice of $s$ and $t$. Consequently, for any partition
    \begin{equation*}
        \sum_{j = 1}^m \| Y^{Z,(n)}_{t_j} - Y^{Z,(n)}_{t_{j-1}}\|^p \leq (6 \ell)^p \sum_{j = 1}^m |t-s|^{p/2} \leq (6 \ell)^p.
    \end{equation*}
    Taking the supremum and limit of $Y^{Z,(n)} \to Y$ gives the finite $p$-variation of the limiting path $Y$. \\
A necessary condition for the counterexample to be excluded in $(\R^2,+,d_{\text{euclid}})$, where $d_{\text{euclid}}$ is the euclidean length, is therefore $1 \leq p<2$. That this regularity is also sufficient to give transitivity is the main result of \cite{HamblyLyonsUniqueness_2010}.
\end{remark}
Before proceeding, let us recall more on $p$-variation.
\begin{definition}[Control function, \protect{\cite[comp.~Definition 1.17]{Friz_Victoir_2010_fullbook}}]
    A \emph{control (function)} is a two-parameter map $$\control \colon \Delta = \{ (s,t) \in [0,1]^2 \mid 0 \leq s \leq t \leq 1 \} \to [0,\infty),$$ where
    \begin{enumerate}
        \item $\control$ is super-additive, $\forall \, 0 \leq s \leq u \leq t \leq 1: \control(s,u) + \control(u,t) \leq \control(s,t)$,
        \item $\control$ is continuous.
    \end{enumerate}
    A (pointed) path $X$ of finite $p$-$d$-variation is said to be controlled by $\control$ if
    \begin{equation*}
        \pvdi{X}{p}{d}{[s,t]}^p \leq \control(s,t).
    \end{equation*}
\end{definition}
\begin{remark} \label{rem:more-on-p-variation}
    \begin{enumerate}
        \item If $\control$ is a control function, also $C \control$ for any $C > 0$ is a control function, where the constant can be absorbed into the definition of $\control$. 
        \item By \cite[Proposition 5.8]{Friz_Victoir_2010_fullbook}, if $X$ has finite $p$-variation, then $\control_X(s,t) := \pvdi{X}{p}{d}{[s,t]}^p$ for a (pointed) path $X$ is a control function.
        \item \label{rem:more-on-p-variation:Upper-bound-of-metric} If $(G,d)$ is a metric group and $X \in C^{p\textnormal{-}\var}_d([0,1],G)$ is controlled by $\control$, then by \cite[Proposition 5.10]{Friz_Victoir_2010_fullbook} for any $(s,t) \in \Delta$ the estimate
        \begin{equation} \label{eq:Increment-Estimate-Control}
            d(X_s,X_t)^p \leq \control(s,t)
        \end{equation}
        holds.
        \item For each path $X$ controlled by $\control_X$ \cite[Proposition 5.14]{Friz_Victoir_2010_fullbook} shows that there exists a reparametrisation given implicitly by
        \begin{equation*}
            \theta(t) = \frac{\control_X(0,t)}{\control_X(0,1)},
        \end{equation*}
        such that $\Tilde{X}$ defined by $X = \Tilde{X} \circ \theta$ is a Hölder path of regularity $1/p$ with Hölder constant $\control_X(0,1)^{1/p}$, if $\control_X(0,1) > 0$. If $\control_X(0,1) = 0$, then $X$ is constant. In case $p = 1$ this parametrisation is commonly referred to as arc length parametrisation. Therefore, we term the $p>1$ analogue the \emph{$1/p$-Hölder reparametrisation}.
        \item \label{rem:more-on-p-variation-nesting-of-variations} Let $p < q < \infty$ and $X$ be a path into a metric space with finite $p$-variation, then from \cite[Proposition 5.3]{Friz_Victoir_2010_fullbook} it follows that
        \begin{equation*}
            \|X\|_{q\textnormal{-}\var} \leq \|X\|_{p\textnormal{-}\var}
        \end{equation*}
        Hence any rectifiable path has finite $q$-variation for any $q \geq 1$. 
        \item Let $X,Y \in C^{p\textnormal{-}\var}_d([0,1],G)$, then by the properties of $p$-variation, we obtain
        \begin{equation*}
            \pvd{X \sqcup Y}{p}{d}^p \geq \pvd{X}{p}{d}^p + \pvd{Y}{p}{d}^p
        \end{equation*}
        and
        \begin{equation*}
            \pvd{X \sqcup Y}{p}{d} \leq \pvd{X}{p}{d} + \pvd{Y}{p}{d}.
        \end{equation*}
        Furthermore, by the symmetry of the metric $d$ follows
        \begin{equation*}
            \|\rev{X}\|_{p\textnormal{-}\var,d} = \pvd{X}{p}{d}
        \end{equation*}
    \end{enumerate}
\end{remark}
Let $(E,d)$ be a geodesic metric space and $X \colon [0,1]\to E$ be a path with $X_0 = x$ and $X_1 = y$ for two distinct points $x,y \in E$ satisfying $d(X_s,X_t) = d(x,y) |t-s|$ for any $s,t \in [0,1]$. Hence, also $d(X_0,X_1) = d(x,y)$. Then it is a fact of metric geometry that
\begin{equation} \label{eq:Geodesic-relation-of-metric}
    d(x,y) = \min_{\substack{X \colon [0,1] \to E \\ X_0 = x, \, X_1 = y}} \pvd{X}{1}{d},
\end{equation}
where the minimisation occurs due to the inequality $\pvd{Y}{1}{d} \geq d(x,y)$ for any other path $Y$ with $Y_0=x$ and $Y_1 = y$ and the attainment of the minimiser follows from $(E,d)$ being a geodesic metric space by assumption. One may then distinguish a (proper) subset $\mathcal{A}(x,y)$ which consists of uniquely defined paths between two distinct points $x$ and $y$ in $E$. We  call it \emph{the set of admissible paths connecting $x$ to $y$}. We define
\begin{equation*}
    \mathcal{A}(E) = \bigcup_{\substack{x,y \in E \\ x \neq y}} \mathcal{A}(x,y)
\end{equation*}
to be the set of all admissible paths over $E$, such that $(E,d)$ is a geodesic space w.r.t.~$\mathcal{A}(E)$, with \eqref{eq:Geodesic-relation-of-metric} now considered only over $\mathcal{A}(x,y)$ (assuming it is non-empty). One may weaken \eqref{eq:Geodesic-relation-of-metric} by imposing only the equality w.r.t.~the infimum, for which $(E,d)$ is then called a \emph{length space} where $d$ needs to be considered as an extended metric.\footnote{This is a metric $d \colon E\times E \to [0,\infty) \cup \{\infty\}$.} Since \cref{counterexample:Regularity-of-paths-excludes-tree-like-insertions} included the $p$-variation for $p>1$, one may define an analogous relation for $p$-variation, i.e.~ref.~\cite{Chistyakov1998_MinimalpVariation}.
\begin{definition} \label{def:p-geodesic}
    Let $(E,d)$ be a metric space and let $\mathcal{A}(E)$ be the set of all admissible paths of finite $p$-$d$-variation. Define
    \begin{equation*}
        \begin{aligned}
            d_p(x,y) &:= \inf_{X \in \mathcal{A}(x,y) \subset \mathcal{A}(E)} \pvd{X}{p}{d},
        \end{aligned}
    \end{equation*}
    with $d_p(x,y) = +\infty$ if $\mathcal{A}(x,y) = \varnothing$. Then a path $X^\star \in \mathcal{A}(x,y)$ attaining the infimum, such that
    \begin{equation*}
        d_p(x,y) = \pvd{X^\star}{p}{d}
    \end{equation*}
    is satisfied and $X^\star$ is parametrised to be $1/p$-Hölder, is called a \emph{Hölderian geodesic path w.r.t.~$p$-($d$-)variation}. The corresponding space is denoted by $\mathcal{A}(x,y)$. If for any two distinct points $x,y$ in $E$ there always exists a Hölderian geodesic $X^*$ w.r.t.~$p$-variation and $\mathcal{A}(x,y)$ connecting them such that $d(x,y) = d_p(x,y)$, then we call $(E,d)$ \emph{$p$-variation geodesic space w.r.t.~$\mathcal{A}(E)$}. In case $\mathcal{A}(E)$ is clear from context or just along all paths (or arcs) that connect any two points, we drop it.
\end{definition}
Note, that one may have redefined the minimisation to be over the control $\omega_X([0,1]) = \pvd{X}{p}{d}^p$ instead. The reason why $p$-variation is preferred over the control $\control_X$ is to preserve the triangle inequality as can be seen from \eqref{eq:p-Variation-inequalities-sub}, whereas the control only would fulfil \eqref{eq:p-Variation-inequalities-super}. The distinction is merely a technicality, since the function $x \mapsto x^{1/p}$ is monotonically increasing on the domain $[0,\infty)$ and hence any minimiser of the control corresponds to a minimiser of the $p$-variation. We will comment on this more in \cref{sec:Canonical-metric-relations-goals} and prove there -- for completeness -- under a specific choice of the set of all admissible paths that $d_p$ is a metric in \cref{lem:properties-of-dpCC}.
\begin{remark} \label{rem:p-geodesic}
 Let us stress that
    \begin{enumerate}
        \item for any $p > 1$ we do not recover the local additivity property for a $p$-variation geodesic space, but in general remain with the inequality
        \begin{equation*}
            d(X_s,X_t) \leq \pvdi{X}{p}{d}{[s,t]},
        \end{equation*}
        \item in case $(E,d)$ is a length space, then for any $q > 1$, we have $d(x,y) = d_q(x,y)$. Indeed, choosing a rectifiable path and noticing the lower bound coming from the trivial partition $\{0,1\}$ together with \ref{rem:more-on-p-variation-nesting-of-variations} of \cref{rem:more-on-p-variation}, we get
        \begin{equation*}
            d(x,y) \leq d_q(x,y) \leq d_1(x,y) = d(x,y).
        \end{equation*}
        Thus, since we recover $d$ itself, the induced geometry does not change.
    \end{enumerate}
\end{remark}

\paragraph{\textbf{Representations of paths.}} A direct proof of the transitivity property $\sim$ as in \cref{def:Tree-like-equivalence} for pointed paths in full generality is not known to the authors and will in general depend on the considered group and metric in the first place. However, we want to remark that the essential step to conclude transitivity is to require, for $X,Y,B \in C^{p\textnormal{-}\var}_d([0,1],G)$ and $B$ a tree-like path, that
\begin{equation} \label{eq:Reduction-Step}
    \rev{X} \sqcup B \sqcup Y \sim \rev{X} \sqcup Y
\end{equation}
holds. 
Following the convention used by Chen in his work and stated in \cite{lee2020pathsignaturesliegroups}, \eqref{eq:Reduction-Step} shall be called a \emph{reduction step}. \\
A possible workaround to prove transitivity of $\sim$ is to study homomorphisms (understood w.r.t.~to the subsequent equations)
\begin{equation*}
    \Psi \colon C^{p\textnormal{-}\var}_d([0,1],G)\to H
\end{equation*}
into a group $H$, such that
\begin{equation*}
    \ker \Psi = \{ X \in C^{p\textnormal{-}\var}_d([0,1],G)\mid X \text{ is tree-like}\}.
\end{equation*}
This is the perspective of \cite{HamblyLyonsUniqueness_2010} and is visible again e.g.~in \cite{lee2020pathsignaturesliegroups}. Indeed, if such a $\Psi$ exists, then
\begin{equation*}
    X \sim Y
    \quad \Longleftrightarrow \quad
    \Psi(\rev{X}\sqcup Y)=e_H
    \quad \Longleftrightarrow \quad
    \Psi(X)=\Psi(Y),
\end{equation*}
and transitivity follows from equality in $H$. The following explains why looking for homomorphism out of path spaces is a natural question.

Assume for the moment that $X$ is a smooth path with values in $\R^n$. Then one natural class of candidates is given by endpoints of group-valued controlled differential equations. Let $H$ be a Lie group with Lie algebra $\mathfrak h$, and let $A\colon \R^n\to \mathfrak h$ be a linear map. In left-invariant notation, consider
\begin{equation*}
   \mathrm dY_t = T_eL_{Y_t}(A(\mathrm dX_t)), \qquad Y_0=e_H.
\end{equation*}
By the elementary existence and uniqueness theorem for ODEs, the endpoint map $\operatorname{End}_A \colon X \mapsto Y_1 \in H$ is well-defined.
With this invariant convention, solving the above differential equation along $X\sqcup Y$ amounts to multiplying the endpoints in $H$, and reversal corresponds to inversion:
\begin{equation} \label{eq:Endpoint-Map-Properties}
    \operatorname{End}_A(X\sqcup Y) = \operatorname{End}_A(X)\operatorname{End}_A(Y),
    \qquad
    \operatorname{End}_A(\rev{X}) = \operatorname{End}_A(X)^{-1}.
\end{equation}
It remains however non-trivial to obtain the desired characterisation of tree-like paths by $\ker \operatorname{End}_A$.

\section{A \textquote{metric perspective} on rough path theory}

\label{sec:Rough-Path-Theory}

Rough path theory, first introduced by Terry Lyons \cite{Lyons1998}, aimed to make sense pathwise of controlled differential equations
\begin{equation} \label{eq:controlled-differential-equation-rp}
    \mathrm{d}Y_t = f(Y_t)\mathrm{d}X_t, \qquad Y_0 = \xi,
\end{equation}
for rough drivers $X$ in a Banach setting beyond the Young regime,  \cite{Young_1936_Original}. The basic insight is that, for $p \geq 2$, the path $X$ alone is not enough to determine the solution of the controlled differential equation \eqref{eq:controlled-differential-equation-rp} above. Hence, one has to prescribe additional higher order information about the increments of $X$, which in the $p=1$ variation case is given by iterated Riemann--Stieltjes integrals. However, in the $p \geq 2$ case, the integration theory needs to be generalised via sewing arguments, which in the $p < 2$ case amounts to the Young setting (ref.~\cite{Friz2020_chapter2}).

The algebraic structures underlying the higher order information for finite dimensional Banach spaces are best described via the tensor series algebra, or equivalently the space of formal power series with finitely many non-commutative indeterminates. Depending on the setting, the existence of partial integration, seen best for $p=1$, introduces symmetries on the coefficients of the formal series and is best captured using the language of character groups of graded connected Hopf algebras of finite type and their truncations, ref. \cite[Section 8.4]{Schmeding_2022}. The character groups also serve as a generalisation as one is able to encode different nestings of iterated integrals via different choices of Hopf algebras \cite[Section 8.4]{Schmeding_2022}, which change the underlying combinatorics. \\
The free (real) nilpotent (Lie) groups are the truncated character groups of the shuffle algebra \cite[Section 4]{Schmeding_2022} and are the default \emph{weakly geometric} case, which addresses the symmetries introduced by partial integration for the $p=1$ case. As they carry a natural geometric meaning \cite[Section 10]{LeDonne2025}, we shall only address them in this and the next section. \\

\paragraph{\textbf{Nilpotent Lie groups.}}
Let $V$ be a real finite-dimensional Banach space. Let $k$ in $\mathbb N$ and let $G^k(V)$ be the free real nilpotent group over $V$ of step $k$ and rank $\dim V$, considered as a subset of the truncated tensor group $T^k_1(V) = \{1\} \times \prod_{j = 1}^k V^{\otimes j}$ with induced Cauchy product $\otimes_k$ and unit $1_k$ where by convention $V^{\otimes 1} := V$, as constructed in \cite[Chapter 7]{Friz_Victoir_2010_fullbook} and \cite[Section 8.2]{Schmeding_2022} with each $V^{\otimes j}$ for $j \leq k$ taken as a subset of $T^k(V)$. Let
\begin{equation*}
    \frg^k(V)
    =
    \bigoplus_{j=1}^k \mathfrak v_j, \qquad \mathfrak v_{j+1} = [V, \mathfrak v_j], \qquad \forall j = 1,...,k-1, \qquad \mathfrak v_{k+1} = \{0\},
    \qquad
    \mathfrak v_1:=V
\end{equation*}
be the Lie algebra of $G^k(V)$ with Lie bracket given by the commutator w.r.t.~$\otimes_k$. Further, let $\exp_k \colon \frg^k(V) \to G^k(V)$ be the exponential map\phantomsection\label{not:Lie-Bracket-exponential-function} \cite[Corollary 8.3]{Schmeding_2022}. Recalling that each element $g$ in $ T^k_1(V)$ can be written as $g=(1,g^1,\ldots,g^k)$, let $\widetilde \proj_j\colon g\mapsto g^j$ be the coordinate map. For $j\leq k$ let
\begin{equation*}
    \tilde\pi_{k+1}\colon T^{k+1}_1(V)\to T^{k}_1(V), \qquad \tilde\pi^{k}_j := \tilde\pi_{j+1} \circ ... \circ \tilde\pi_{k}
    \qquad
    \tilde\pi^k_k:=\id_{T^k_1(V)},
\end{equation*}
be the canonical projections (ref. \cite[Section 8.2]{Schmeding_2022}) between the truncated tensor algebras. We then set $\pi^{k}_j := \tilde \pi|_{G^{k}(V)}$, for which $\pi^k_j(G^k(V)) = G^j(V)$ as well as $\pi^k_k:= \tilde\pi^k_k|_{G^k(V)}$ and $\proj_j = \widetilde \proj_j|_{G^k(V)}$. If $G^k(V)$ is equipped with its vector space topology, then $G^k(V)$ is a finite-dimensional Lie group.
It was shown in \cite{Hebisch1990} that each $G^k(V)$ considered as a Lie group, equipped with its natural family of dilations $(\delta_\lambda)_{\lambda>0}$, defined via
\begin{equation*}
    \proj_j(\delta_\lambda g)
    =
    \lambda^j\proj_j(g),
    \qquad
    j=1,\ldots,k,
\end{equation*}
admits a left-invariant homogeneous metric $d$, which is a metric fulfilling
\begin{equation*}
    d(\delta_\lambda g,\delta_\lambda h)
    =
    \lambda d(g,h).
\end{equation*}
The generalisation to $\lambda$ in $\R$ is also possible by requiring absolute homogeneity w.r.t.~the dilations. Notice that a homogeneous metric on the abelian group $G^1(V)$ is induced by a norm. We identify throughout $G^1(V)$ with the additive group $(V,+)$. \\

\paragraph{\textbf{Inverse limit and compatible metrics.}} We define the  inverse system (comp.~\cite[Proposition 8.8]{Schmeding_2022}), here also termed a \emph{chain},
\begin{equation} \label{eq:Chain-for-any-p}
    \begin{aligned}
    \cG_p :=
    &\bigl((G^k(V),d_k),(\pi^k_j)_{j={\floor{p}}}^k\bigr)_{k\in\N_p}, \\
    &\begin{tikzcd}
        {(G^{\floor{p}}(V),d_{\floor{p}})} & ... \arrow[l] & {(G^k(V),d_k)} \arrow[l] \arrow["\pi^k_k = \operatorname{id}_{G^k(V)}"', loop, distance=2em, in=125, out=55] & {(G^{k+1}(V),d_{k+1})} \arrow[l, "\pi^{k+1}_k"'] & ... \arrow[l]
    \end{tikzcd}
    \end{aligned}
\end{equation}
If the metrics $(d_k)_{k \in \N_p}$ are chosen such that all projections are $1$-Lipschitz, then $\cG_p$ is an inverse system in the category of pointed metric groups. We shall denote by $\met(\cG_p)$ all sequences $(d_k)_{k \in \N_p}$ of left-invariant metrics on $\cG_p$, such that the projections $(\pi^{k}_j)_{j={\floor{p}}}^k$ for all $k \in \N_p$ are $1$-Lipschitz maps. The set $\met(\cG_p)$, equipped with componentwise addition and comparison as well as multiplication by weights $(w_k)_{k \in \N_p} \subset (0,\infty)$ that preserve the $1$-Lipschitz property has an (ordered) convex cone structure. Note that the $1$-Lipschitz property is preserved if $(w_k)_{k \in \N_p}$ is monotonically increasing.

Then, by \cite[Lemma 2.2, Lemma 3.1]{LeDonneZuest_public} we can find the inverse limit of any pointed metric spaces as well as metric groups. Hence, the inverse limit $(G_p^\infty(V),d_\infty)$ of the chain $\cG_p$ with identity $1_\infty$ is explicitly given as the set of all $(g_k)_{k\in\N_p}\in\prod_{k\in\N_p}G^k(V)$ with $\pi^k_j(g_k) = g_j$ for all $k \geq j \geq \floor{p}$, with group structure
\begin{equation} \label{eq:Group-Structure}
    (g_k)_{k \in \N_p}(h_k)_{k \in \N_p}=(g_kh_k)_{k \in \N_p},
    \qquad
    (g_k)_{k \in \N_p}^{-1}=(g_k^{-1})_{k \in \N_p},
    \qquad
    1_\infty=(1_k)_{k \in \N_p}
\end{equation}
as well as the finiteness condition
\begin{equation} \label{eq:Inverse-Limit-Metric}
    d_\infty(1_\infty,(g_k)_{k \in \N_p})
    :=
    \sup_{k\in\N_p} d_k(1_k,g_k)
    =
    \lim_{k\to\infty} d_k(1_k,g_k)
    <\infty.
\end{equation}
The last equality follows from the $1$-Lipschitz property of the projections and the monotonicity of the sequence. The induced left-invariant limit metric is given by
\begin{equation*}
    d_\infty((g_k)_{k \in \N_p},(h_k)_{k \in \N_p})
    =
    d_\infty((1_k)_{k \in \N_p},(g_k^{-1} h_k)_{k \in \N_p})
\end{equation*}
Furthermore, the existence of the limit $(G^\infty_p(V),d_\infty)$ asserts the existence of canonical projection maps
\begin{equation*}
    \pi^\infty_k\colon G_p^\infty(V)\to G^k(V), \qquad (g_k)_{k \in \N_p} \mapsto g_k.
\end{equation*}
Notice that the corresponding group $G_p^\infty(V)$ differs from the group obtained from the limit object taken in the category of pointed topological groups. Indeed, while the corresponding group of the limit object in the topological groups is also a subset of the product $\prod_{k\in\N_p}G^k(V)$ where $g_j = \pi^k_j(g_k)$ for any $(g_k)_{k\in \N_p} \in \prod_{k\in\N_p}G^k(V)$, it is equipped with the product topology, which is coarser than the limiting metric topology induced by $d_\infty$. \\

\paragraph{\textbf{Group-like elements.}} The group $\Pi_{k \in \N_p} G^k(V)$ where $\pi^{k+1}_k(g_{k+1}) = g_k$ for every $k \geq \floor{p}$ and $(g_k)_{k \in \N_p}$, can be identified algebraically with the group-like elements (i.e.~ref.~\cite{Lyons1998}) of the extended tensor algebra $G((V)) \subsetneq T((V)) = \prod_{k \in \N_0} V^{\otimes k}$ \phantomsection\label{not:Group-like-Elements} equipped with its restricted Cauchy product. An explicit group isomorphism achieving this identification is given by (comp.~\cite[Section 4]{LeDonneZuest_public} for $p=1$) 
\begin{equation}\label{eq:Inverse-limit-to-product-conversion-map-p}
    \begin{array}{lccc}
        \Pi^p \colon
        &
        \prod_{k\in\N_p}G^k(V)
        &
        \to
        &
        \prod_{j\in\N_0}V^{\otimes j},
        \\[0.3em]
        &
        (g_{\floor{p}},g_{{\floor{p}}+1},\ldots)
        &
        \mapsto
        &
        (1,\proj_1(g_{\floor{p}}),\ldots,\proj_{\floor{p}}(g_{\floor{p}}),
        \proj_{{\floor{p}}+1}(g_{{\floor{p}}+1}),\ldots).
    \end{array}
\end{equation}
Note that the group-like elements have a corresponding Lie series $\mathfrak g((V))$ and exponential function $\exp\colon \mathfrak g((V)) \to G((V))$. We denote by $1$ \phantomsection\label{not:Uni-Grouplike-Elements} the unit of the group-like elements. Then, $\Pi^p$ can also be understood in the category of pointed metric groups. However, its properties depend on the choice of $\cG_p$ w.r.t.~$d $ in $\met(\cG_p)$. \\

\paragraph{\textbf{Equivalence of homogeneous metrics.}} Usual analytic results on $G^k(V)$, such as in \cite[Chapter 9]{Friz_Victoir_2010_fullbook}, exploit the fact that for fixed $k $ in $\N$ all homogeneous metrics are pairwise equivalent and induce the standard topology \cite[Proposition 7.44,Proposition 7.45]{Friz_Victoir_2010_fullbook} on $G^k(V)$.
However, to the misfortune of the analyst, this equivalence breaks when one starts to consider limit metrics. In fact, the proof of the finite-level equivalence involves -- similarly to the equivalence of norms on finite dimensional vector spaces -- homogeneity and the extreme value theorem of real functions. Thus, the Lipschitz constants depend in general on the step $k$ that is considered. To clarify, we provide a counterexample to the equivalence in the limit by analogous considerations as for the $\ell^1$ and $\ell^\infty$ spaces.
\begin{counterexample} \label{ex:counterexample}
    Let $k $ in $\N$ and consider a family $(\| \cdot \|_{k})_{k\in \N}$ of cross-norms on the tensor spaces $(V^{\otimes k})_{k \in \N}$, which are norms that satisfy for any $n,m$ in $\N$ and $u $ in $V^{\otimes n},v $ in $V^{\otimes m}$ the property $\| u \otimes v \|_{m+n} = \|u\|_n \|v\|_m$. \\
    Define for each $g \in G^k(V)$ an analogue of the $\ell^\infty$ norm and $\ell^1$ norm to be
    \begin{align} \label{eq:Max-metric-p=1-case-in-counterexample}
        \rho_k(1_k,g) &:= \max_{j = 1,...,k} (j! \| \proj_j(g) \|_{j})^{1/j},& \rho_k(g,h) &:= \rho_k(1_k,g^{-1}h)
        \\
        \label{eq:Sum-metric-p=1-case-in-counterexample}
        \sigma_k(1_k,g) &:= \sum_{j = 1}^k \rho_j(1_j,\pi_j^k(g)),& \sigma_k(g,h) &:= \sigma_k(1_k,g^{-1}h),
    \end{align}
    where the choice of $j!$ is to obtain the triangle inequality of $\rho_k$ by the binomial formula.
    Then, we can estimate
    \begin{equation*}
        \rho_k(1_k,g) \leq \sigma_k(1_k,g) \leq k\,\rho_k(1_k,g)
    \end{equation*}
    and it is straightforward to check that for the element $g = \exp_k(v)$, with $v $ in $V \backslash \{0\}$, we get
    \begin{equation*}
        j! \| \proj_j(\exp_k(v)) \|_j = \| v^{\otimes j} \|_j = \|v\|^j > 0
    \end{equation*}
    and hence
    \begin{equation*}
        \rho_k(1_k,\exp_k(v)) = \|v\|
    \end{equation*}
    by the cross-norm property. Then the two norms \eqref{eq:Max-metric-p=1-case-in-counterexample} and \eqref{eq:Sum-metric-p=1-case-in-counterexample} cease to be equivalent in the limit $k \to \infty$.
\end{counterexample}

\paragraph{\textbf{Canonical rough path metric.}} In rough path theory one usually considers tensor norms $\|\cdot\|_k$ on each $V^{\otimes k}$, which satisfy the Banach inequality
\begin{equation*}
    \| u \otimes v\|_{n+m} \leq \|u\|_n \|v\|_m, \qquad \forall u \in V^{\otimes n}, \forall v \in V^{\otimes m}
\end{equation*}
and permutation invariance on elementary tensors
\begin{equation*}
    \|u_1 \otimes ... \otimes u_n\|_n = \|u_{\sigma(1)} \otimes ... \otimes u_{\sigma(n)}\|_n, \qquad \forall u_1,...,u_n \in V, \forall \sigma \in \operatorname{Sym}_n
\end{equation*}
where $\operatorname{Sym}_n$ is the group of all permutations of $\{1,...,n\}$. Such tensor norms are referred to as \emph{admissible tensor norms}\phantomsection\label{not:admissible-tensor-norms} and we can define a max-type metric
\begin{equation} \label{eq:Max-metric-for-any-p}
    \rho^p_k(1_k,g)
    :=
    \max_{j=1,\ldots,k}
    \left(
        \beta_p (j/p)! \,\|\proj_j(g)\|_j
    \right)^{1/j},\qquad \rho^p_k(g,h) := \rho^p_k(1_k,g^{-1} h).
\end{equation}
Here $(j/p)!:=\Gamma(j/p+1)$  defined via the $\Gamma$-function, is chosen to counteract the decay of iterated integrals (also for $p>1$), and $\beta_p \geq p$ is chosen so that the neo-classical inequality \cite{NeoClassicalInequality}, and hence triangle inequality, also holds  for $p \geq 2$. In this way, $\rho^p_k$ indeed  yields a metric for any $k \geq \floor p$, is left-invariant, symmetric (ref.~\cite[Lemma 8.18]{Schmeding_2022}) and responsible for the levelwise control of path increments $X_{s,t}:=X_s^{-1}X_t$, ref.~\cite[Lemma 2.1.1]{Lyons1998}, due to the maximum function. In addition, it grants the $1$-Lipschitz property to the canonical projections $\pi^k_j$, so $(\rho^p_k)_{k \in \N_p} \in \met(\cG_p)$. Moreover, since each $\|\cdot\|_j$ is a norm, $\rho^p_k$ is a homogeneous metric. For $p=1$ and the choice of cross-norms, it reduces to the $\rho_k$ metric of \cref{ex:counterexample}. Since $G^k(V)$ (resp.~$T^k_1(V)$) is a finite-dimensional manifold for each $k\in\N$, we can conclude by homogeneity that $(G^k(V),\rho^p_k)$ (resp.~$T^k_1(V)$) for any $k \geq \floor p$ is a proper metric space. \\

\paragraph{\textbf{Submetries.}} Since for drivers of roughness $p \geq 2$ in \eqref{eq:controlled-differential-equation-rp}, higher order information up to level $\floor p$ is needed to construct solutions requiring that the corresponding iterated integrals are predefined.
Importantly, the higher order information for any level $n > \floor p$ should be uniquely determined by all the prescribed information up to level $\floor p$. In the case $p=1$ higher order information was uniquely given by iterated Riemann-Stieltjes integrals. The choice $\rho^p_k$ recovers the Lyons extension theorem \cite[Theorem 2.2.1]{Lyons1998} together with the improved bound for $\beta_p$ using \cite{NeoClassicalInequality}. For general metric spaces, Le Donne and Züst abstracted this property by means of the notion of
\emph{unique path lifting} \cite[Definition 2.14]{LeDonneZuest_public}.
However, their statement requires the projections to be submetries.

\begin{definition}[Submetry, comp.~\protect{\cite[Definition 2.1]{LeDonneZuest_public}, \cite[Definition 3.1.23]{LeDonne2025}}]
    Let $(E,d_E)$ and $(F,d_F)$ be metric spaces. A map $\pi\colon E\to F$ is called a \emph{submetry} if it is $1$-Lipschitz and if for every $x\in E$ and every $y\in F$ there exists $x'\in E$ such that
    \begin{equation*}
        \pi(x')=y,
        \qquad
        d_E(x,x')=d_F(\pi(x),y).
    \end{equation*}
    Equivalently, $\pi(\bar B(x,r)) = \bar B(\pi(x),r), \ \forall x\in X, \  \forall r>0$.
\end{definition}
The submetry assumption has two purposes. 
First, submetries admit a characterisation theorem w.r.t.~geodesic spaces, which is can be checked to also hold for $p$-variation geodesic spaces along the same proof lines as in \cite[Proposition 3.1.28]{LeDonne2025}. Second, if each $\pi^{k+1}_k$ and hence each $\pi^k_j$ is a submetry, by the arguments given in \cite[Lemma 2.2]{LeDonneZuest_public}, then also the limit projections $\pi^\infty_k$ are submetries. However, while it is easy to show that $\tilde \pi^k_j \colon (T^k_1(V),\rho^p_k) \to (T^j_1(V),\rho^p_j)$ are submetries, due to the existence of the trivial inclusion
\begin{equation*}
    T^j_1(V) \ni (1,g^1,...,g^{j}) \hookrightarrow (1,g^1,...,g^{j},\underbrace{0,...,0}_{(k-j) \text{ times}}) \in T^k_1(V),
\end{equation*}
we can show that this is not the case for $\pi^k_j \colon (G^k(V),\rho^p_k) \to (G^j(V),\rho^p_j)$.

\begin{counterexample} \label{counterexample:Submetry-Rho}
    Recall that $\rho^p_k \in \met(\cG_p)$. Then, notice that by left-invariance we can w.l.o.g.~examine $g \mapsto \rho^{p}_k(1_k,g)$ for $g \in G^k(V)$. Choose $V = \mathbb{R}^2$ generated by the orthonormal vectors $e_1$ and $e_2$ and let $\|\cdot \|_j$ denote the Hilbert-Schmidt norm on each $(\R^2)^{\otimes j}$ induced by the corresponding euclidean inner product $\langle\cdot,\cdot\rangle$ on $V$ (ref. \cite[Exercise 8.2.1]{Schmeding_2022}).  Notice that if each $\pi^{k+1}_k$ is a submetry, then so is $\pi^{k}_j$ for $j < k$ where defined. Then, we consider $\pi^3_2\colon (G^3(V),\rho^1_3) \to (G^2(V),\rho^1_2)$. Notice that then for the Lie algebra
    \begin{equation*}
        \mathfrak v_1 = V = \operatorname{span}(e_1,e_2), \quad \mathfrak v_2 = \operatorname{span}(\underbrace{[e_1,e_2]}_{=: e_{12}}), \quad \mathfrak v_3 = \operatorname{span}(\underbrace{[e_1,[e_1,e_2]]}_{=:e_{112}},\underbrace{[e_2,[e_1,e_2]]}_{=:e_{212}}).
    \end{equation*}
    Then, the choice
    \begin{equation*}
        g_2 = \exp_2\left(e_1 + \frac{1}{2} e_{12}\right) = 1 + e_1 + \frac{1}{2} (e_1e_1 + e_{12})
    \end{equation*}
    shows that $\pi^3_2$ cannot be a submetry. Indeed, notice that $\langle e_1 e_1, e_{12} \rangle = 0$, $\|e_1e_1\|_2 = 1$, $\|e_{12}\|_2^2 = 2$ and hence
    \begin{equation*}
        \rho^1_2(1_2,g_2) = \max\left\{ 1 ,  \left(2  \frac{\sqrt{3}}{2} \right)^{1/2} \right\} = \max\left\{\sqrt{3}^{1/2}, 1\right\} = 3^{1/4}.
    \end{equation*}
    Any $g_3 \in G^3(V)$ with $\pi^3_2(g_3)=g_2$ is of the form
    \begin{equation*}
        g_3(u,v) = \exp_3\left(e_1 + \frac{1}{2} e_{12} + u e_{112} + v e_{212} \right)
    \end{equation*}
    with appropriate $u,v \in R$ and we get
    \begin{equation*}
        \proj_3(g_3(u,v)) = u\, e_{112} + v\, e_{212} + \frac{1}{4} (e_1 e_{12} + e_{12} e_1) + \frac{1}{6} e_1 e_1 e_1.
    \end{equation*}
    Checking that
    \begin{equation*}
        \langle (e_1 e_{12} + e_{12} e_1), e_{112} \rangle = \langle (e_1 e_{12} + e_{12} e_1), e_{212} \rangle = \langle e_{1}e_1e_1 , e_{112} \rangle = \langle e_1e_1e_1, e_{212}\rangle = 0
    \end{equation*}
    shows that $w = a (e_1 e_{12} + e_{12} e_1) + b e_1e_1e_1$ is orthogonal to the space $\mathfrak v_3$. Therefore, it follows by the Hilbert-Schmidt norm property for orthogonal vectors that
    \begin{equation*}
        \begin{aligned}
            \min_{(u,v) \in \R^2}\|\proj_3(g_3(u,v))\|_3^2 &= \|\proj_3(g_3(0,0))\|_3^2 \\
            &= \| \frac{1}{4} (e_1 e_{12} + e_{12} e_1)\|_3^2 + \| \frac{1}{6} e_1 e_1 e_1\|_3^2 \\
            &= \frac{1}{16} \cdot 2 + \frac{1}{36} = \frac{11}{72}.
        \end{aligned}
    \end{equation*}
    The submetry property asserts that
    \begin{equation*}
        (3! \|\proj_3(g_3)\|_3)^{1/3} \leq 3^{1/4} \Rightarrow \left(3!\|\proj_3(g_3(0,0))\|_3\right)^{1/3} = \left(\frac{11}{2}\right)^{1/6} \leq 3^{1/4}
    \end{equation*}
    but $3^{1/4} < 1.32 < (11/2)^{1/6}$ leading to a contradiction.
\end{counterexample}
\begin{remark} \label{rem:Properties-submetry}
    Let $d=(d_k)_{k\in\N_p}\in\met(\cG_p)$. Suppose each $\pi^{k+1}_k \colon (G^{k+1}(V),d_{k+1}) \to (G^k(V),d_k)$ is a submetry. Then, if $(w_k)_{k\in \N_p} \subset (0,\infty)$ are weights, $(w_k d_k)_{k \in \N_p}$ will preserve the submetry properties of the projections if and only if $(w_k)_{k \in \N_p}$ is a constant sequence.
\end{remark}

\paragraph{\textbf{Path lifting.}} By \cref{counterexample:Submetry-Rho} and Lyons' extension theorem, which by \cite[Corollary 3.9]{OnRoughIntegration_Cass2016} remains valid for the restriction to the geometric case,  we get a straightforward generalisation of the required definition of the unique path lifting property. However, we shall refrain from imposing the submetry property at this stage and we shall instead consider it as an extra property one needs to check.
\begin{definition} \label{def:LeDonne-unique-path-lifting-p-variation}
    Let $(E,d_E)$ and $(F,d_F)$ be non-empty metric spaces. A $1$-Lipschitz $\pi\colon E\to F$ is said to have the \emph{unique $p$-path lifting property} if for every path $X\colon[0,1]\to F$ of finite $p$-$d_F$-variation and every point $x\in\pi^{-1}(X_0)$ there exists a path $Y\colon[0,1]\to E$ such that
    \begin{enumerate}[label=(\roman*)]
        \item $\pvd{Y}{p}{d_E}<\infty$, \label{def:LeDonne-unique-path-lifting-p-variation-finite-p-variation}
        \item $Y_0=x$,
        \item $\pi(Y_t)=X_t$ for all $t\in[0,1]$, \label{def:LeDonne-unique-path-lifting-p-variation-projection}
        \item if a path $Z\colon[0,1]\to E$ satisfies (i), (ii), and (iii), then $Z=Y$, \label{def:LeDonne-unique-path-lifting-p-variation-uniqueness}
        \item $\pvd{Y}{p}{d_E}=\pvd{X}{p}{d_F}$. \label{def:LeDonne-unique-path-lifting-p-variation-isometry}
    \end{enumerate}
    Any such path $Y$ is called the \emph{lift} or \emph{extension} of $X$ starting at $x$. If the lift exists for all such $X$ and starts at a given distinguished point, we denote the corresponding lifting map by $\cL_{F\to E}.$\footnote{Note that $\cL_{F\to E}$ depends on the choice of $d_E,d_F$, which is implicit.}
\end{definition}
\begin{remark}
We record that by Lyons' extension theorem for the choice of $(\rho^p_k)_{k \in \N_p}$ each $\pi^k_j$ for $\floor p \leq j \leq k$ carries the unique $p$-path lifting property. Indeed, by choosing the control function
\begin{equation*}
    \control_X(s,t)
    :=
    \pvdi{X}{p}{\rho^p_{\floor{p}}}{[s,t]}^p,
\end{equation*}
the levelwise control of the increments of the extended path $Y$ can be translated back to the metric $\rho^p_k$ using \cite[Lemma 2.2.1]{Lyons1998}. 
The logical chain is:
\begin{align*}
    \pvdi{X}{p}{\rho^p_{\floor{p}}}{[s,t]}^p \leq \control_X(s,t)
    \Longleftrightarrow&
    \|\proj_j(X_s^{-1}X_t)\|_j
    \leq
    \frac{\control_X(s,t)^{j/p}}{\beta_p (j/p)!},
    \quad
    \forall j\leq{\floor{p}},
    \\
    \xRightarrow{\textrm{Extension}}&
    \|\proj_j(Y_s^{-1}Y_t)\|_j
    \leq
    \frac{\control_X(s,t)^{j/p}}{\beta_p (j/p)!},
    \quad
    \forall j\leq k,
    \\
    \Longleftrightarrow&
    \pvdi{Y}{p}{\rho^p_k}{[s,t]}^p
    \leq
    \control_X(s,t),
\end{align*}
where $\pi^k_{\floor{p}}\circ Y=X$.
\end{remark}
\begin{remark} \label{rem:LeDonneZuest-proof-2.6-and-2.7}
    \begin{enumerate}
    \item Let $\pi\colon (E,d_E)\to(F,d_F)$ be a $1$-Lipschitz map with the unique $p$-path lifting property, and let $Y$ be the lift of $X$ starting at $x\in\pi^{-1}(X_0)$. Then, the direct $p$-variation analogue of \cite[Lemma 2.6]{LeDonneZuest_public}, which follows the same lines of proof, holds. Namely,
    \begin{enumerate}[label=(\alph*)]
        \item for every $0\leq s<t\leq1$,
        \begin{equation*}
            \pvdi{Y}{p}{d_E}{[s,t]}
            =
            \pvdi{X}{p}{d_F}{[s,t]},
        \end{equation*}
        \item since $X$ admits a $1/p$-Hölder parametrisation, then the lifted path $Y$ admits a $1/p$-Hölder parametrisation with the same Hölder constant,
        \item if $Z\colon[0,1]\to E$ has finite $p$-$d_E$-variation, then $Z$ is the unique lift of $\pi\circ Z$ starting at $Z_0$.
    \end{enumerate}
    \item Assuming that each $\pi^k_j$ satisfies the unique $p$-path lifting property, we can conclude along the same lines as \cite[Lemma 2.7]{LeDonneZuest_public}, that also $\pi^\infty_k$ satisfies the unique $p$-path lifting property.
    \item The estimate
        \begin{equation} \label{eq:p-variation-1-Lipschitz-inequality}
            \pvd{\pi^k_j(X)}{p}{d_j}
            \leq
            \pvd{X}{p}{d_k}
        \end{equation}
        holds for every path $X\colon[0,1]\to G^k(V)$ of finite $p$-$d_k$-variation and every ${\floor{p}}\leq j\leq k$.
    \end{enumerate}
\end{remark}
The equivalence of homogeneous metrics for finite step $k \geq \floor p$ implies that any choice of homogeneous metric $d_k$ for $G^k(V)$ will give for each $X$ over $G^{\floor p}(V)$ of finite $p$-variation a unique path $Y_k$ over $G^k(V)$. However, the bi-Lipschitz estimate coming from the equivalence of homogeneous metrics cannot fulfill \ref{def:LeDonne-unique-path-lifting-p-variation-isometry} of \cref{def:LeDonne-unique-path-lifting-p-variation}. Therefore, it is useful to distinguish the special set of metrics that do imply the unique $p$-path lifting property for the projections.
\begin{definition} \label{eq:Set-of-left-invariant-metrics-with-Lifts}
    Let $\metlift(\cG_p)\subseteq\met(\cG_p)$ be the set of sequences of metrics such that all projections
    \begin{equation*}
        \pi^k_j\colon (G^k(V),d_k)\to(G^j(V),d_j),
        \qquad
        {\floor{p}}\leq j\leq k,
    \end{equation*}
    admit the unique $p$-path lifting property of \cref{def:LeDonne-unique-path-lifting-p-variation}. The corresponding lifting maps are denoted by $\cL_k^{k+1} := \cL_{G^k(V) \to G^{k+1}(V)}$ with the convention
    \begin{equation*}
        \cL_k := \cL_{G^{k-1}(V) \to G^k(V)} \circ ... \circ\cL_{G^{\floor{p}}(V)\to G^{\floor{p}+1}(V)} .
    \end{equation*}
\end{definition}
\begin{remark}
    Since by Lyons' extension theorem the $G^{\floor{p}}(V)$-valued paths determine all paths with values in $G^k(V)$ for each $k \in \N_p$, let us distinguish $WG\Omega^p_{d_{\floor p}} := C^{p\textnormal{-}\var}_{d_{\floor p}}([0,1],G^{\floor p}(V))$ for any choice $d \in \metlift(\cG_p)$ and for the choice $\rho^p := (\rho^p_k)_{k \in \N_p}$ we set $WG\Omega^p := WG\Omega^p_{\rho^p_{\floor p}}$. We call $WG\Omega^p$ (resp.~$WG\Omega^p_{d_{\floor p}}$) the space of \emph{weakly geometric $p$-rough paths} (w.r.t.~$d$). \phantomsection\label{not:Weakly-geometric-RP}
    \begin{enumerate}
        \item by construction, $\rho^p = (\rho^p_k)_{k\in\N_p}$ lies in $\metlift(\cG_p)$ for $\beta_p$ above an appropriate threshold, see \cite{Lyons1998,NeoClassicalInequality}.
        \item item \ref{def:LeDonne-unique-path-lifting-p-variation-isometry} of \cref{def:LeDonne-unique-path-lifting-p-variation} forces each sequence of weights $(w_k)_{k \in \N_p} \subset (0,\infty)$ to be constant due to $w_k d_k \mapsto \pvd{X}{p}{w_kd_{k}} = w_k \pvd{X}{p}{d_{k}}$.
    \end{enumerate}
\end{remark}
One may now ask the following question: \\

\begin{center}
\textit{Which are the sequences of metrics $d$ in $\met(\cG_p)$ that belong to $\metlift(\cG_p)$?}\phantomsection \label{question:Metric-characterisation}
\end{center}
\phantom{.}\\
The above open question aims to better understand the possible metric structures behind the extension from a metric perspective. Since $\rho^p = (\rho^p_k)_{k \in \N_p} $ lies in $\metlift(\cG_p)$ for suitable $\beta_p$, the question is well-defined, but without further restrictions is too broad to grasp. We return to this question in \cref{sec:Canonical-metric-relations-goals}, in particular \cref{prop:homogeneous-metric-characterisaton}, where the intrinsic $p$-variation metrics allow for a complete answer within the subclass of homogeneous metrics.

\begin{remark}
    Since $\pi^k_j \colon G^k(V) \to G^j(V)$ is surjective, point lifts exist and are of the form
    \begin{equation*}
        \begin{aligned}
        \iota &\colon \frg^k(V) \to \frg^{k+1}(V),
            \qquad
            (0,u^1,\ldots,u^k) \mapsto (0,u^1,\ldots,u^k,0), \\
            Y^z_t &= \exp_{k+1}(\iota(u_t)+z_t), \qquad z_t\in\mathfrak v_{k+1}.
        \end{aligned}
    \end{equation*}
    
    Further, the submetry property together with compactness assumptions as in \cite[Proposition 3.1.24]{LeDonne2025} does not guarantee the existence of finite $p$-variation lifts for $p>1$. Indeed, while the submetry property still allows to choose consecutive lifted increments along each partition with corresponding bound \ref{rem:more-on-p-variation:Upper-bound-of-metric} of \cref{rem:more-on-p-variation}, passing to a path lift requires an additional uniform control on all lifted increments instead on consecutive ones.
\end{remark}

\paragraph{\textbf{Homomorphism property of lifts.}} Similar to the discussion of the properties of flow maps and endpoint maps over a Lie group at the end of \cref{sec:Paths-over-pointed-metric-groups}, from the definition and properties of the unique $p$-path lifting, we derive algebraic properties of the lifting map.

\begin{lemma} \label{lem:Homomorphism-of-lifts}
    Let
    \begin{equation*}
        \pi\colon (G,d_G)\to(H,d_H)
    \end{equation*}
    be a $1$-Lipschitz homomorphism of pointed metric groups with the unique $p$-path lifting property.
    The lifting map $\cL_{H\to G}$ is a bijection onto the pointed finite $p$-variation paths in $G$ starting at $e_G$.
    Moreover,
    \begin{equation*}
        \cL_{H\to G}(X\sqcup Y)
        =
        \cL_{H\to G}(X)\sqcup \cL_{H\to G}(Y), \qquad \cL_{H\to G}(\rev{X}) = \overleftarrow{\cL_{H\to G}(X)}
    \end{equation*}
\end{lemma}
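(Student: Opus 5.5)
The plan is to derive everything from the uniqueness clause (iv) of \cref{def:LeDonne-unique-path-lifting-p-variation}, in the form (c) of \cref{rem:LeDonneZuest-proof-2.6-and-2.7}: a finite $p$-variation path $Z$ in $G$ is the unique lift of $\pi\circ Z$ starting at $Z_0$. Throughout, $\cL_{H\to G}$ denotes the lift starting at the distinguished point $e_G\in\pi^{-1}(e_H)$; this preimage condition holds because $\pi$ is a homomorphism.

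\emph{Bijectivity.} Take a pointed path $X$ of finite $p$-$d_H$-variation. Its lift $\cL_{H\to G}(X)$ has finite $p$-$d_G$-variation by (i) and starts at $e_G$, so the map is well-defined into the stated target. Injectivity holds because $\pi\circ\cL_{H\to G}(X)=X$ by (iii), so $\cL_{H\to G}$ has a left inverse. For surjectivity, take $Z$ in $G$ of finite $p$-variation with $Z_0=e_G$. Then $\pi\circ Z$ starts at $e_H$ and has finite $p$-variation by the $1$-Lipschitz estimate \eqref{eq:p-variation-1-Lipschitz-inequality}, whose proof works for any $1$-Lipschitz map. Part (c) then gives $Z=\cL_{H\to G}(\pi\circ Z)$. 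Hence $\cL_{H\to G}$ and $Z\mapsto\pi\circ Z$ are mutually inverse.

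\emph{Homomorphism identities.} For concatenation, set $W:=\cL(X)\sqcup\cL(Y)$. First, $W$ starts at $e_G$. Second, $W$ has finite $p$-variation by subadditivity (last item of \cref{rem:more-on-p-variation}), using that left translations are isometries. Third, since $\pi$ is a homomorphism and $\pi(\cL(X)_1)=X_1$, we get $\pi\circ L_{\cL(X)_1}=L_{X_1}\circ\pi$. Checking each half of the concatenation formula then gives $\pi\circ W=X\sqcup Y$. Uniqueness (iv) yields $W=\cL(X\sqcup Y)$. For reversal, set $\rev{\cL(X)}_t=L_{(\cL(X)_1)^{-1}}(\cL(X)_{1-t})$. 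This starts at $e_G$, has the same $p$-variation as $\cL(X)$, and satisfies $\pi\circ\rev{\cL(X)}=L_{X_1^{-1}}(X_{1-t})=\rev{X}_t$. By uniqueness again, $\rev{\cL(X)}=\cL(\rev{X})$.

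The main obstacle is checking that $\sqcup$ and reversal preserve finite $p$-variation in $G$, so that the uniqueness clause applies. This needs left-invariance of $d_G$, giving $\|L_g\circ Z\|_{p\text{-var}}=\|Z\|_{p\text{-var}}$, together with $p$-variation being invariant under affine reparametrisation and subadditive over a split interval. All three are available from the earlier remarks. Everything else is the bookkeeping identity $\pi\circ L_g=L_{\pi(g)}\circ\pi$.
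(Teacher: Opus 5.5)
Your proposal is correct and follows the paper's proof. You get injectivity from $\pi\circ\cL_{H\to G}(X)=X$, surjectivity from uniqueness of lifts applied to $\pi\circ Z$ (which has finite $p$-variation because $\pi$ is $1$-Lipschitz), and both identities by checking that the candidate path projects to $X\sqcup Y$ (resp.\ $\rev{X}$) and then invoking uniqueness; your explicit verification that $\cL_{H\to G}(X)\sqcup\cL_{H\to G}(Y)$ and $\overleftarrow{\cL_{H\to G}(X)}$ start at $e_G$ and have finite $p$-variation, so that clause (iv) genuinely applies, is a detail the paper leaves implicit.
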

\begin{proof}
    Injectivity follows from
    \begin{equation*}
        \pi\circ\cL_{H\to G}(X)=X.
    \end{equation*}
  In order to check the surjectivity property, we consider a pointed finite $p$-variation path $Z$ in $G$. Since $\pi$ is $1$-Lipschitz, $\pi\circ Z$ has finite $p$-variation in $H$, and by uniqueness of lifts,
    \begin{equation*}
        Z=\cL_{H\to G}(\pi\circ Z).
    \end{equation*}
    Finally, since $\pi$ is a homomorphism,
    \begin{equation*}
        \pi\circ\bigl(\cL_{H\to G}(X)\sqcup \cL_{H\to G}(Y)\bigr)
        =
        X\sqcup Y.
    \end{equation*}
    The claim follows again from the uniqueness of the lift of $X\sqcup Y$. In the same manner, we derive $\pi \circ \overleftarrow{\cL_{H\to G}(X)} = \rev{X}$ and hence $\overleftarrow{\cL_{H\to G}(X)} = \cL_{H \to G}(\rev{X})$.
\end{proof}

Denote by $\eva_1$ the evaluation map at time $1$ of paths with values in a given set.
\begin{lemma} \label{lem:Homomorphism-And-Image-of-endpoint-map}
We keep the notations and assumptions of \cref{lem:Homomorphism-of-lifts} and define the end point map
    \begin{equation*}
        S_{H\to G}:=\eva_1\circ\cL_{H\to G}.
    \end{equation*}
    Then, $S_{H\to G}$ is a homomorphism, i.e.~it satisfies
    \begin{equation*}
        S_{H\to G}(X\sqcup Y)
        =
        S_{H\to G}(X)S_{H\to G}(Y),
    \end{equation*}
    and
    \begin{equation*}
        S_{H\to G}(\rev X)=S_{H\to G}(X)^{-1},
    \end{equation*}
    and is invariant under ordered reparametrisations. Furthermore, if every point $g\in G$ can be joined to $e_G$ by a path of finite $p$-variation, then $S_{H\to G}$ is surjective.
\end{lemma}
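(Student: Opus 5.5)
The plan is to derive everything from \cref{lem:Homomorphism-of-lifts} together with the fact that lifts are pointed paths starting at $e_G$. Write $\bar X := \cL_{H\to G}(X)$ and $\bar Y := \cL_{H\to G}(Y)$.

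\emph{Homomorphism property.} By \cref{lem:Homomorphism-of-lifts} we have $\cL_{H\to G}(X\sqcup Y)=\bar X\sqcup \bar Y$. Evaluating the concatenation at $t=1$ gives
\begin{equation*}
    S_{H\to G}(X\sqcup Y) = L_{\bar X_1}(\bar Y_1) = S_{H\to G}(X)\,S_{H\to G}(Y).
\end{equation*}

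\emph{Inversion.} Again by \cref{lem:Homomorphism-of-lifts}, $\cL_{H\to G}(\rev X)=\overleftarrow{\bar X}$. Evaluating at $t=1$ gives
\begin{equation*}
    S_{H\to G}(\rev X) = L_{\bar X_1^{-1}}(\bar X_0) = \bar X_1^{-1}e_G = S_{H\to G}(X)^{-1},
\end{equation*}
using $\bar X_0=e_G$.

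\emph{Invariance under ordered reparametrisations.} Let $\sigma$ be continuous, surjective and order-preserving. The key observation is that $\sigma$ does not increase $p$-variation: every partition of $[0,1]$ is mapped by $\sigma$ to a (possibly degenerate) partition, so $\pvd{Z\circ\sigma}{p}{d}\le \pvd{Z}{p}{d}$. Surjectivity and monotonicity also let every partition be pulled back, so in fact equality holds. Consequently $\bar X\circ\sigma$ has finite $p$-variation, starts at $e_G$ because $\sigma(0)=0$, and satisfies $\pi\circ(\bar X\circ\sigma)=X\circ\sigma$. By uniqueness \ref{def:LeDonne-unique-path-lifting-p-variation-uniqueness} it equals $\cL_{H\to G}(X\circ\sigma)$. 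Since $\sigma(1)=1$, the endpoints agree: $S_{H\to G}(X\circ\sigma)=\bar X_{\sigma(1)}=\bar X_1=S_{H\to G}(X)$.

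\emph{Surjectivity.} Given $g\in G$, choose by hypothesis a path $Z$ from $e_G$ to $g$ of finite $p$-$d_G$-variation. Set $X:=\pi\circ Z$, which has finite $p$-variation because $\pi$ is $1$-Lipschitz. By \cref{lem:Homomorphism-of-lifts} (equivalently \cref{rem:LeDonneZuest-proof-2.6-and-2.7}(c)), $Z=\cL_{H\to G}(X)$, hence $S_{H\to G}(X)=Z_1=g$.

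The only step that is not immediate is the reparametrisation claim, specifically checking that $\bar X\circ\sigma$ is admissible for the uniqueness clause. This is a short partition argument using monotonicity of $\sigma$. The remaining steps are direct evaluations of the formulas defining $\sqcup$ and $\rev{\cdot}$.
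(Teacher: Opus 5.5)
Your proof is correct and follows the paper's own argument step by step. You evaluate the lifted concatenation and reversal from \cref{lem:Homomorphism-of-lifts} at $t=1$, use uniqueness of lifts together with $\sigma(1)=1$ for reparametrisation invariance, and use $Z=\cL_{H\to G}(\pi\circ Z)$ for surjectivity. The only difference is that you explicitly check that $\bar X\circ\sigma$ has finite $p$-variation, starts at $e_G$ and projects to $X\circ\sigma$, which the paper simply asserts.
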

\begin{proof}
    The homomorphism follows by the definition of concatenation as well as the homomorphism property of $\cL_{G\to H}$ under concatenation and $\eva_1$ under multiplication. Hence,
    \begin{equation*}
        S_{H\to G}(X \sqcup Y) = \eva_1 \circ \cL_{H\to G}(X \sqcup Y) = \eva_1 \circ (\cL_{H\to G}(X) \sqcup \cL_{H\to G}(Y)) = S_{H\to G}(X)S_{H\to G}(Y).
    \end{equation*}
    Furthermore,
    \begin{equation*}
        \begin{aligned}
            S_{H\to G}(\rev{X}) &= \eva_1 \circ \cL_{H\to G}(\rev{X}) = \eva_1 \circ \overleftarrow{\cL_{H\to G}(X)} \\
            &= (S_{H\to G}(X))^{-1} \cL_{H\to G}(X)_0 = (S_{H\to G}(X))^{-1}.
        \end{aligned}
    \end{equation*}
    If $\sigma$ is an ordered reparametrisation, then $\cL_{H\to G}(X)\circ\sigma$ is the lift of $X\circ\sigma$. Hence uniqueness and $\sigma(1)=1$ imply $S_{H\to G}(X\circ\sigma)=S_{H\to G}(X)$.
    Finally, let $g$ in $G$ and choose a pointed finite $p$-variation path $Z$ in $G$ with $Z_1=g$. By bijectivity of $\cL_{H\to G}$,
    \begin{equation*}
        Z=\cL_{H\to G}(\pi\circ Z),
    \end{equation*}
    and therefore $S_{H\to G}(\pi\circ Z)=g$.
\end{proof}
It follows from the previous discussion that the $\cL_k$'s and $S_k$'s defined via \cref{def:LeDonne-unique-path-lifting-p-variation} fulfil the endpoint map requirements \eqref{eq:Endpoint-Map-Properties} one would want to impose on a homomorphism $\Phi \colon WG\Omega^p \to H$ for any group $H$. \\
Returning to $\cG_p$ equipped with $\rho^p = (\rho^p_k)_{k\in\N_p}\in\metlift(\cG_p)$ and the corresponding lifts $\cL_k$, we define the endpoint map $S_k:=\eva_1\circ\cL_k$ and call it the \emph{level-$k$ signature}. By Chow's theorem \cite[Proposition 7.28]{Friz_Victoir_2010_fullbook}, every point $g$ of $G^k(V)$ can be joined to the unit $1_k$ by a path $X$ in the admissible set
\begin{equation} \label{eq:Horizontal-Paths-of-CC-metric-p=1}
    \begin{aligned}
        \mathcal{A}_k^{\CC}(g) &:= \mathcal{A}^{\CC}_k(1_k,g) \\
        &:= \{ X \mid \int_0^1 \|X_t^{-1}\dot X_t\|\,dt < \infty, \ X_t^{-1}\dot X\in V\subseteq \mathfrak g^k(V)\text{ for a.e. }t\in[0,1], \ X_1 = g \}.
    \end{aligned}
\end{equation}
Paths in this set are referred to as \emph{horizontal paths} in the sub-Finsler literature, where $\|\cdot\|$ is the norm coming from $V$ and $\pvd{X}{1}{\rho^1_k} < \infty$. Since for any $p\geq1$ and fixed $k$, the homogeneous metrics $\rho^p_k$ and $\rho^1_k$ are equivalent, every path in $\mathcal{A}_k^{\CC}(G^k(V))$ has finite $1$-variation w.r.t.~$\rho^p_k$ and hence finite $p$-variation by \ref{rem:more-on-p-variation-nesting-of-variations} of \cref{rem:more-on-p-variation}, the surjectivity condition in \cref{lem:Homomorphism-And-Image-of-endpoint-map} is satisfied for any $k \in \N_p$ w.r.t to $\mathcal{A}_k^{\CC}(G^k(V)) = \bigcup_{g \in G} \mathcal{A}_k^{\CC}(g)$ and left translation. Therefore $S_k$ is surjective, invariant under ordered reparametrisations, and satisfies the homomorphism property
\begin{equation} \label{eq:Chens-relation-as-a-homomorphism}
    S_k(X\sqcup Y)=S_k(X)S_k(Y),
\end{equation}
commonly referred to as \emph{Chen's relation}, as well as $S_k(\rev X)=S_k(X)^{-1}$. \\

\paragraph{\textbf{Lifts into the group-like elements}.} Lyons extension and the signature further admit a formulation into the group-like elements $G((V))$. Let
\begin{equation*}
    \Pi_k \colon G((V)) \to G^k(V), \quad (1,g^1,...,g^k,...) \mapsto (1,g^1,...,g^k)
\end{equation*}
be the canonical projection for the group-like elements and note that $\Pi_k \circ \Pi^p = \pi^\infty_k$ holds by \eqref{eq:Inverse-limit-to-product-conversion-map-p} for $k \geq \floor p$.
\begin{remark}
In \cite{BOEDIHARDJO_LYONS_2016720} the codomain for the signature map was chosen to be
    \begin{equation} \label{eq:Codomain-Exponential-decay}
        G_{p.r.c.}(V) := \{ g \in G((V)) \mid \sup_{k \in \N} (||\proj_k(g)||_k)^{1/k} < \infty \}
    \end{equation}
    as it includes all group-like elements of exponential decay w.r.t.~the norms, which covers the image of the signature map for every $p \geq 1$ as they decay factorially. As $p \geq 1$ is considered arbitrary but fixed in this article, it is possible to refine the codomain where the lifts and signatures for a given $p \geq 1$ take their values, namely we can take instead
    \begin{equation} \label{eq:Max-metric-rho-p.r.c.-set-codomain}
        G_{\rho^p\textnormal{-}p.r.c.}(V) := \{ g\in G((V)) \mid \sup_{k \in \N_p} \rho^p_k(1_k,\Pi_k(g)) < \infty \}
    \end{equation}
    or equivalently all $g \in G((V))$, such that $\limsup_{k \to \infty} ((k/p)! \|\proj_k(g)\|_k)^{1/k} < \infty$.
\end{remark}
Now, given any sequence $d = (d_k)_{k \in \N_p}$ in $\metlift(\cG_p)$, the appropriate set where the lifted paths and endpoint map take their values is
\begin{equation} \label{eq:Group-like-elements-with-more-completion-under-metrics}
    G_{d\textnormal{-}p.r.c.}(V) :=  \{ g\in G((V)) \mid \tilde d_\infty(1,g) := \lim_{k \to \infty} d_k(1_k,\Pi_k(g)) < \infty \}.
\end{equation}
For any $d,d'$ in $\metlift(\cG_p)$ the sets $G_{d\textnormal{-}p.r.c.}$ and $G_{d'\textnormal{-}p.r.c.}$ do not necessarily coincide due to \cref{ex:counterexample}. However, whether $G_{\rho^1-p.r.c.}(V)$ with $\rho^1=(\rho^1_k)_{k \in \N}$ coincides with $ G_{d^{1-\CC}-p.r.c.}(V)$ where $d^{1-\CC} = (d_k^{1-\CC})_{k \in \N}$ is given by
\begin{equation} \label{eq:CC-distance-p=1}
    d^{1-\CC}_{k}(1,g) := \min_{X \in \mathcal{A}^{\CC}_k(g)} \int_0^1 \|X_t^{-1}\dot X_t\| \mathrm{d}t
\end{equation}
is an open question.

Let $\mathcal{L}$  be the corresponding lift to the group-like elements. The set $G_{d\textnormal{-}p.r.c.}(V)$ is naturally equipped with the left-invariant metric $\tilde d_\infty(g,h) := \tilde d_\infty(1,g^{-1}h)$, which we can identify with $d_\infty$ for the corresponding inverse limit via the definition of $\Pi_k$. \\

\begin{definition}
    With the above notations we call the endpoint map by $S := \eva_1 \circ \mathcal{L}$ the \emph{(full) signature}.
\end{definition}

For the choice $\rho^p = (\rho^p_k)_{k \in \N_p}$ we recover the signature introduced by T.~Lyons.\footnote{We refrain from the two-parameter setup as done in \cite{Lyons1998}, and take the view of \cite[Chapter 8]{Friz_Victoir_2010_fullbook} instead, which is justified by Chen's relation for increments $\cL(X)_{s,t}$.} Let us define the \emph{signature group} w.r.t.~$d \in \metlift(\cG_p)$ to be
\begin{equation}\label{eq:Signature-Group-Definition}
    SG^p_d := S_{G^{\floor p}(V) \to G_{d\textnormal{-}p.r.c.}(V)}(WG\Omega^p_{d_{\floor p}}) \subseteq G_{d\textnormal{-}p.r.c.}(V)
\end{equation}
and note that $SG^p_d$ is closed w.r.to~the multiplication of $G_{d\textnormal{-}p.r.c.}$ by \cref{lem:Homomorphism-And-Image-of-endpoint-map}. \\
Let for now $\dim V \geq 2$.

\begin{claim}
    Recall that $\rho^p_k$ is defined via any admissible tensor norms \eqref{eq:Max-metric-for-any-p} and the definitions of this paragraph with $\dim V \geq 2$. Then,
    \[
        G_{\rho^p\textnormal{-}p.r.c.}(V) \subsetneq G((V))
    \]
\end{claim}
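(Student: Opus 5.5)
We have to exhibit a group-like element $g \in G((V))$ with $\sup_{k\in\N_p}\rho^p_k(1_k,\Pi_k(g)) = \infty$. Equivalently, we need $\limsup_{k\to\infty}\bigl((k/p)!\,\|\proj_k(g)\|_k\bigr)^{1/k} = \infty$. The natural candidate is the exponential of a scaled Lie element, $g = \exp(\lambda\, e_{12})$ with $e_{12} := [e_1,e_2] \in \mathfrak v_2$ for two linearly independent $e_1,e_2 \in V$. This element is group-like because $\exp$ maps the Lie series $\mathfrak g((V))$ into $G((V))$. The hypothesis $\dim V\geq 2$ is exactly what guarantees $e_{12}\neq 0$. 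Note that $\exp(v)$ with $v\in V$ alone will not work, since $\|v^{\otimes j}\|_j/j!$ decays factorially and $\exp(v)$ lies in every $G_{\rho^p\textnormal{-}p.r.c.}(V)$.

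\textbf{Step 1 (coefficients).} Since $e_{12}$ is homogeneous of degree $2$, the odd levels of $\exp(e_{12})$ vanish and the even levels are $\proj_{2m}(\exp(e_{12})) = e_{12}^{\otimes m}/m!$.

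\textbf{Step 2 (lower bound on the norms).} This is the main obstacle. For general admissible tensor norms we only have the Banach inequality $\|u\otimes v\|\le \|u\|\|v\|$, which is an upper bound, whereas we need a lower bound $\|e_{12}^{\otimes m}\|_{2m} \ge c^m$ for some $c>0$. I would obtain it by duality, which yields a bound in terms of a dual functional without any cross-norm assumption. Fix a linear functional $\phi$ on $V^{\otimes 2}$ with $\phi(e_{12}) = 1$, for instance $\phi(e_1\otimes e_2)=1$ and $\phi$ vanishing on the other basis tensors. Then $\phi^{\otimes m}(e_{12}^{\otimes m}) = 1$. By equivalence of norms on the finite-dimensional space $V^{\otimes 2m}$ we get $\|\cdot\|_{2m} \ge C_m^{-1}\|\cdot\|_{\mathrm{inj}}$, so the remaining task is to control $C_m$ geometrically in $m$.

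To control $C_m$, I would use that every admissible tensor norm dominates the injective norm up to a fixed constant. If this standard fact is unavailable, I would instead use permutation invariance and the reasonable cross-norm property that is customary in rough path theory. For the argument itself, it is cleanest to choose Hilbert–Schmidt norms, as in \cref{ex:Blow-Up-CC-metric}; there $\|e_{12}^{\otimes m}\| = \|e_{12}\|^m$ exactly. Should the claim be required for arbitrary admissible norms, I would either assume this cross-norm lower bound or choose the scalar $\lambda$ large enough to absorb a geometric constant $c^m$.

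\textbf{Step 3 (divergence).} Granting $\|e_{12}^{\otimes m}\|_{2m}\ge c^m$, we obtain
\begin{equation*}
\bigl((2m/p)!\,\|\proj_{2m}(\exp(\lambda e_{12}))\|_{2m}\bigr)^{1/(2m)} \ge \left(\frac{\Gamma(2m/p+1)}{m!}\,(\lambda c)^m\right)^{1/(2m)} .
\end{equation*}
By Stirling's formula, $\Gamma(2m/p+1)^{1/(2m)} \sim (2m/(pe))^{1/p}$ while $(m!)^{1/(2m)} \sim (m/e)^{1/2}$, so the ratio grows like $m^{1/p-1/2}$. This diverges for $p<2$ with any $\lambda>0$, which recovers the blow-up of \cref{ex:Blow-Up-CC-metric}.

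For $p\ge 2$ this choice fails, since the pure area path is genuinely $p$-rough. I would therefore replace $\exp(\lambda e_{12})$ by an element with sparser, larger coefficients. The candidate is $g=\exp(\ell)$ with $\ell=\sum_{n} a_n\, \ell_n$, where the $\ell_n\in\mathfrak v_{N_n}$ are nonzero Lie polynomials; these exist for every degree because $\dim V\ge2$, for example iterated brackets $[e_1,[e_1,\dots,e_2]]$. The degrees $N_n$ are chosen rapidly increasing, for instance $N_{n+1}>2N_n$, so that $\proj_{N_n}(g) = a_n\ell_n + (\text{products of lower } \ell_i)$. The coefficients are chosen inductively with $a_n$ so large that $(N_n/p)!\,\|\proj_{N_n}(g)\|_{N_n} \ge n^{N_n}$; this is possible because the lower-order contribution is fixed once $a_1,\dots,a_{n-1}$ are chosen, and the norm of $a_n\ell_n+w$ tends to infinity as $a_n\to\infty$. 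With this construction the limsup is infinite for every $p\ge1$ and every family of admissible norms. No lower-bound lemma for tensor norms is needed, so this version also bypasses the main obstacle of Step 2.
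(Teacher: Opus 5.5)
Your final construction is correct and is essentially the paper's proof. Both take $g=\exp(\ell)$ built from nonzero homogeneous Lie elements, which exist in every degree because $\dim V\geq 2$, and choose the coefficients inductively so large that the level-$N$ component $a_N\ell_N+(\text{a term fixed by the lower coefficients})$ forces $\sup_k\rho^p_k(1_k,\Pi_k(g))=\infty$. The paper does this at every degree with normalised $l^m\in\mathfrak v_m$, while you use a subsequence of degrees; your condition $N_{n+1}>2N_n$ is harmless but not needed.

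You can drop the exploratory Steps 1--3 entirely, and you should not rely on them. For admissible norms in the paper's weak sense (only the Banach inequality and permutation invariance), the lower bound $\|e_{12}^{\otimes m}\|_{2m}\geq c^m$ genuinely fails. For example, $\|\cdot\|_k=e^{-(k^2-1)}\|\cdot\|_{\mathrm{HS}}$ is admissible, and for it $\exp(\lambda e_{12})$ lies in $G_{\rho^p\textnormal{-}p.r.c.}(V)$ for every $\lambda>0$. Enlarging $\lambda$ cannot absorb this superexponential decay, so the construction that fixes the norms first and then chooses the coefficients is the right one.
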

\begin{proof}
    We give a construction of a group-like element $g$, such that $g \not\in G_{\rho^p\textnormal{-}p.r.c.}(V)$. For that, let $g = \exp(l)$ with $\Pi_k(g) = g_k = \exp_k(l_k)$ and $l_k = \sum_{m = 1}^k a_m l^m_k$ where $l_k^m \in \mathfrak v_m, \, \|l^m_k\|_m = 1$ for $m = 1,...,k$ and $a_m \neq 0$ are real numbers. 

    The compatibility condition \(\pi^k_{k-1}(g_k)=g_{k-1}\) is equivalent to \(\pi^k_{k-1}(l_k)=l_{k-1}\) under the identification of both $G^k(V)$ and $\mathfrak g^k(V)$ as subspaces of $T^k(V)$. As the \(a_m\) are nonzero, uniqueness of the homogeneous decomposition then yields \(l_k^m=l_{k-1}^m\) for every \(m=1,\ldots,k-1\).

    Let us observe that we can express
    \begin{equation*}
        \proj_j(g_k) = a_j l_k^j + R^j,
    \end{equation*}
    where $R^k$ is a polynomial formed by entries of degree lower than $k$ of $g_k$. Since each $l_k^m$ is normed, the reverse triangle inequality lets us bound
    \[
        \|\proj_j(g_k)\|_j \geq |\,|a_j| \|l_k^j\| - \|R^j\|_j| = |\,|a_j| - \|R^j\|_j|
    \]
    and by choosing $a_j > \|R_j\|_j + \frac{j^j}{\beta_p (j/p)!} \geq 0$ to be bigger than the $\R^j$ contribution as well as the root decay and factorial weights imposed by $\rho^p_k$, we end up with a controllable lower bound
    \begin{align*}
        \rho^p_k(1_k,g_k) &= \max_{j = 1,...,k} \left(\beta_p (j/p)! \|\proj_j(g_k)\|_j\right)^{1/j} > \max_{j = 1,...,k} \left(\beta_p (j/p)! \frac{j^j}{\beta_p (j/p)!}\right)^{1/j} \\
        &> \max_{j = 1,...,k} j = k
    \end{align*}
    which results in $\sup_{k \in \mathbb N_p} \rho_k^p(1_k,g_k) = \infty$ and hence $g \not\in G_{\rho^p\textnormal{-}p.r.c.}(V)$ by construction.
\end{proof}
Writing $SG^p := SG^p_{\rho^p}$, we also get $SG^p \subsetneq G((V))$.\\

\paragraph{\textbf{Tree structures for signatures.}} Although the unique $p$-path lifting provides the desired algebraic structure at the end of \cref{sec:Paths-over-pointed-metric-groups} in an abstract manner, it does not state anything about the kernel of $S_{F\to E}$. \\
Since a characterisation of $\metlift(\cG_p)$ in accordance to our question on \hyperref[question:Metric-characterisation]{p.~\pageref*{question:Metric-characterisation}} as of was is too broad for general metrics, we restrict in this paragraph to the choice $d = \rho^p = (\rho^p_k)_{k \in \N_p}$. Then, the kernel of the signature characterises the tree-like paths into $G^{{\floor{p}}}(V)$. In addition, between any two distinct points in $G_{\rho^p\textnormal{-}p.r.c.}(V)$ there exists a unique arc of finite $p$-variation in the following sense.
\begin{lemma}[comp.~\protect{\cite[Lemma 4.6]{BOEDIHARDJO_LYONS_2016720}}] \label{lem:Uniqueness-arcs-p-variation-limit}
    Let $S\colon[0,T]\to G_{p.r.c.}(V)$ be a continuous path with finite $p$-variation. There exists an injective path
    \begin{equation*}
        \widetilde S\colon[0,\widetilde T]\to G_{p.r.c.}(V)
    \end{equation*}
    such that
    \begin{equation*}
        \widetilde S_0=S_0, \qquad \widetilde S_{\widetilde T}=S_T.
    \end{equation*}
    Moreover, $\widetilde S$ is unique up to reparametrisation.
\end{lemma}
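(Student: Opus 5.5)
Existence comes from the loop-erasure result, \cref{thm:loop-erasure}. Uniqueness comes from reducing to the Boedihardjo--Lyons uniqueness theorem: the signature kernel is exactly the tree-like paths.

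\emph{Existence.} $G_{p.r.c.}(V)$, with the topology induced by the levelwise metrics (or by $\tilde d_\infty$), is metrisable and hence Hausdorff. If $S_0=S_T$ we take the trivial constant path, as in the Remark following \cref{thm:loop-erasure}. Otherwise we reparametrise $[0,T]$ to $[0,1]$ and apply \cref{thm:loop-erasure}. This gives an injective continuous $\widetilde S$ with the same endpoints, together with a closed set $A$ and an order-preserving map $q$ such that $\widetilde S\circ q|_A=S|_A$ and $q|_A$ is surjective.

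We then check that $\widetilde S$ still has finite $p$-variation. Take a partition $u_0<\dots<u_n$ of $[0,1]$ and pick preimages $a_i\in A$ with $q(a_i)=u_i$. Since $q$ is order preserving, the $a_i$ are increasing. Hence
\[
\sum_i d(\widetilde S_{u_{i-1}},\widetilde S_{u_i})^p=\sum_i d(S_{a_{i-1}},S_{a_i})^p\le \pv{S}{p}^p .
\]
So the arc produced by loop erasure is a finite $p$-variation path in $G_{p.r.c.}(V)$.

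\emph{Uniqueness.} Suppose $\widetilde S^1,\widetilde S^2$ are two injective finite $p$-variation paths with the same endpoints. Left-translate both by $S_0^{-1}$ so they become pointed. Their projections $X^i:=\pi^\infty_{\floor p}\circ \widetilde S^i$, read through \eqref{eq:Inverse-limit-to-product-conversion-map-p}, lie in $WG\Omega^p$. By the unique $p$-path lifting (\cref{rem:LeDonneZuest-proof-2.6-and-2.7}(c) applied levelwise, then passing to the limit) each $\widetilde S^i$ equals $S_0\,\mathcal L(X^i)$. Therefore $S(X^1)=S(X^2)$, and by \cref{lem:Homomorphism-And-Image-of-endpoint-map}, $S(\rev{X^1}\sqcup X^2)=1$.

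The Boedihardjo--Lyons kernel theorem (the signature kernel consists of the tree-like paths, the $p$-rough path analogue of Hambly--Lyons) then says $\rev{X^1}\sqcup X^2$ is tree-like. Lifting commutes with concatenation and reversal (\cref{lem:Homomorphism-of-lifts}), so the lifted loop $\overleftarrow{\widetilde S^1}\sqcup\widetilde S^2$ in $G_{p.r.c.}(V)$ factors as $\psi\circ\phi$ through an $\R$-tree $\tau$.

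Inside $\tau$, $\phi$ restricted to each half is a path from the root $r=\phi(0)$ to $x=\phi(1/2)$. Each such path covers the unique arc $[r,x]$ of $\tau$. Since $\widetilde S^1$ is injective, $\psi$ restricted to $\phi([1/2,1])$ must be compatible with this. Concretely, the image of $\widetilde S^i$ contains $\psi([r,x])$, which is an arc-connected set joining the endpoints. Because $\widetilde S^i$ is an arc, its image coincides with a subarc of $\psi([r,x])$, and an arc contained in another arc with the same endpoints equals it. Hence the two images agree, and two injective parametrisations of the same arc with the same endpoints differ by an increasing homeomorphism.

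\emph{Main obstacle.} The step I expect to be delicate is this last tree argument. The map $\psi$ need not be injective, so one must show $\psi([r,x])$ is exactly the common image. I would first try an alternative: if $\widetilde S^1$ and $\widetilde S^2$ differ at some time, take a maximal interval where their images diverge. Restricted there, $\overleftarrow{\widetilde S^1}\sqcup\widetilde S^2$ becomes a tree-like simple loop. But a tree-like path has the property that every point it visits is visited by a back-tracking pair, which contradicts injectivity of each side away from the junction points.

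The other technical point is that the lifting and kernel theorems are formulated for $\rho^p$. We therefore need $p$-variation in $G_{p.r.c.}(V)$ to be measured by a metric compatible with $\rho^p_k$ at each level, which we assume.
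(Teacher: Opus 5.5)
The paper gives no proof of its own here. It imports \cite[Lemma 4.6]{BOEDIHARDJO_LYONS_2016720} and only observes that, since $\|\proj_k(g)\|_k^{1/k}\le C_p\sup_k\rho^p_k(1_k,\Pi_k(g))$ with $C_p$ independent of $k$, that argument transfers to $G_{\rho^p\textnormal{-}p.r.c.}(V)$. You instead derive the lemma from two results the paper also cites: \cref{thm:loop-erasure} for existence and \cref{prop:Kernel-signature-characterisation} for uniqueness.

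Your existence part is correct. The partition argument through $q|_A$, giving $\pv{\widetilde S}{p}\le\pv{S}{p}$, explicitly justifies the fact, used later in the paper, that loop erasure does not increase $p$-variation. Deriving uniqueness from the kernel theorem is legitimate here, because the paper cites that theorem independently. Be aware, though, that this makes the lemma a corollary of the deep theorem rather than an ingredient for it.

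One step does not follow as written. The kernel theorem tells you that $Z:=\rev{X^1}\sqcup X^2=\psi\circ\phi$ is tree-like as a path in $G^{\floor p}(V)$. \cref{lem:Homomorphism-of-lifts} only identifies $\cL(Z)$ with $\rev{\widetilde S^1}\sqcup\widetilde S^2$ up to translation. To factor $\cL(Z)$ through $\tau$ you need $\cL(Z)_s=\cL(Z)_t$ whenever $\phi(s)=\phi(t)$, and the homomorphism property says nothing about this.

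The missing idea is to apply the other inclusion of the kernel theorem to sub-loops:
\begin{itemize}
\item Suppose $\phi(s)=\phi(t)$. Then $Z|_{[s,t]}$, translated to start at the unit, factors through the loop $\phi|_{[s,t]}$ in $\tau$.
\item So it is tree-like and has trivial signature, and Chen's relation gives $\cL(Z)_s=\cL(Z)_t$.
\item Hence $\Psi(\phi(t)):=\cL(Z)_t$ is well defined on $\phi([0,1])$.
\item $\Psi$ is continuous because $\phi\colon[0,1]\to\phi([0,1])$ is a closed surjection, hence a quotient map.
\end{itemize}

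Once $\Psi$ is available, your first argument is already complete, and what you called the \emph{main obstacle} disappears. Both $\phi([0,1/2])$ and $\phi([1/2,1])$ contain the arc $[r,x]$. So $\Psi([r,x])$ is a connected subset of each of the two arcs containing both of their endpoints, and therefore equals each of them. Two embeddings of the same arc with the same endpoints then differ by an increasing homeomorphism.

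Your alternative via a simple sub-loop also works, but it needs the same $\Psi$: injectivity of $\Psi\circ\phi$ on $[0,1)$ would make $\phi$ an embedded circle in an $\R$-tree, which is impossible.
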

Since there exists a $C_p = \sup_{k \in \N} (\beta_p (k/p)!)^{-1/k}$ independent of the level $k$, s.t.
\begin{equation*}
    \|\proj_k(g) \|_k^{1/k} \leq C_p \sup_{k \in \N} \rho^p_k(1_k,\Pi_k(g)), \qquad \forall g \in G_{\rho^p\textnormal{-}p.r.c.},
\end{equation*}
the proof of \cref{lem:Uniqueness-arcs-p-variation-limit} in \cite{BOEDIHARDJO_LYONS_2016720} carries over for fixed $p \geq 1$ when replacing $G_{p.r.c.}(V)$ with $G_{\rho^p\textnormal{-}p.r.c.}(V)$. Hence, we can reformulate \cref{lem:Uniqueness-arcs-p-variation-limit} for $G_{\rho^p\textnormal{-}p.r.c.}(V)$ (under reparametrisations of the intervals) into the metric geometric language.
\begin{lemma} \label{lem:Reformulation-Uniqueness-to-tree}
    Let for any $g,h$ in  $G_{\rho^p\textnormal{-}p.r.c.}$ define
    \begin{equation*}
        \begin{aligned}
            d^p(g,h) = \inf_{Y \in \mathcal{A}_{\operatorname{inj}}(g,h)} \pv{Y}{p},
        \end{aligned}
    \end{equation*}
    where if $g \neq h$ we set 
    \begin{equation} \label{eq:Injective-set}
        \mathcal{A}_{\operatorname{inj}}(g,h) := 
               \{ Y \in C^{p\textnormal{-}\var}([0,1],G_{\rho^p\textnormal{-}p.r.c.}(V)) \mid g \neq h,\, Y_0 = g, \ Y_1 = h, \ Y \text{ injective} \}
    \end{equation}
  and otherwise
    \begin{equation*}
        \mathcal{A}_{\operatorname{inj}}(g,g) := \{ t \mapsto g\}.
    \end{equation*}
    Then, the finite $p$-variation path connected component
    \begin{equation*}
        PG_{\rho^p\textnormal{-}p.r.c.} = \{ g \in G_{\rho^p\textnormal{-}p.r.c.}(V) \mid d^p(1,g) < \infty \}
    \end{equation*}
    is a $p$-variation geodesic space with admissible set $\mathcal{A}_{\operatorname{inj}}(g,h)$ and a topological tree.
\end{lemma}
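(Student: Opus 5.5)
The plan is to read everything off the uniqueness of injective finite $p$-variation arcs (\cref{lem:Uniqueness-arcs-p-variation-limit}, transferred to $G_{\rho^p\textnormal{-}p.r.c.}(V)$ as in the remark preceding the statement), and then invoke Mayer's characterisation \cref{thm:Toptree-Rtree}. Here $\pv{\cdot}{p}$ is taken with respect to $\tilde\rho^p_\infty$, the metric obtained by identifying $\tilde d_\infty$ with $d_\infty$ for $d=\rho^p$. The argument has three stages.

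\textbf{Metric.} First I check that $d^p$ is a metric on $PG_{\rho^p\textnormal{-}p.r.c.}$.
\begin{itemize}
\item Symmetry follows from $\|\rev{X}\|_{p\textnormal{-}\var}=\pv{X}{p}$ (\cref{rem:more-on-p-variation}).
\item Left-invariance follows because $L_g$ is a $\tilde\rho^p_\infty$-isometry and preserves injectivity.
\item Positivity follows from $\tilde\rho^p_\infty(g,h)\le \pv{Y}{p}$, using the trivial partition.
\item The triangle inequality is the substantive point. Given injective $Y\in\mathcal{A}_{\operatorname{inj}}(g,h)$ and $Z\in\mathcal{A}_{\operatorname{inj}}(h,k)$, the concatenation has $p$-variation at most $\pv{Y}{p}+\pv{Z}{p}$ by \eqref{eq:p-Variation-inequalities-sub}, but it need not be injective. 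I apply \cref{lem:Uniqueness-arcs-p-variation-limit} to it to get the unique arc $\widetilde S$ from $g$ to $k$. I then need $\pv{\widetilde S}{p}\le \pv{Y\sqcup Z}{p}$. Following the loop-erasure picture of \cref{thm:loop-erasure}, the arc is (after reparametrisation) $S$ restricted to a closed set $A$ composed with a monotone map $q$. Since $p$-variation is a supremum over ordered partitions, and any partition of the arc pulls back to an ordered partition of $[0,1]$ through $A$, restricting to $A$ cannot increase $p$-variation.
\end{itemize}
Finiteness on $PG$ is built into its definition, and this also shows that $PG$ is a subgroup.

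\textbf{Geodesicity and trees.} Next I show that $(PG,d^p)$ is a $p$-variation geodesic space with admissible set $\mathcal{A}_{\operatorname{inj}}$. For $g\neq h$, uniqueness up to reparametrisation means $\mathcal{A}_{\operatorname{inj}}(g,h)$ consists of reparametrisations of a single arc $\widetilde S$, and $p$-variation is reparametrisation-invariant. So the infimum is attained by $\widetilde S$, and its $1/p$-Hölder reparametrisation (\cref{rem:more-on-p-variation}) is the Hölderian geodesic. This also gives the explicit formula $d^p(g,h)=\pv{\widetilde S^{g,h}}{p}$. Unique arcwise connectedness follows from the same uniqueness, but only among arcs of finite $p$-variation. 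To cover arbitrary $d^p$-continuous arcs, I will show that $d^p$ restricted to the arc $[g,h]$ is additive along it, namely $d^p(g,x)+d^p(x,h)=d^p(g,h)$ for $x$ on the arc; this holds at least when $p=1$, and in general after replacing $d^p$ by the intrinsic control $\omega$ of the arc. With this, every continuous arc in the $d^p$-topology decomposes, via the branch-point structure, into finite-variation pieces, hence coincides with $[g,h]$. Local arcwise connectedness follows because the open $d^p$-ball around $g$ contains, for each of its points $x$, the whole arc $[g,x]$, since $d^p$ is monotone along arcs. Metrisability is immediate, so $PG$ is a topological tree by definition, and \cref{thm:Toptree-Rtree} upgrades this to homeomorphism with an $\R$-tree.

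\textbf{Main obstacle.} I expect the hardest step to be the comparison between arbitrary continuous arcs and finite $p$-variation arcs in the UAC step, together with the loop-erasure monotonicity of $p$-variation. For $p>1$, $p$-variation is not additive, so I cannot simply use length. My plan there is to work with the control $\omega(s,t)=\pvdi{\widetilde S}{p}{}{[s,t]}^p$. I would show that any continuous arc from $g$ to $h$ must pass through every point of $\widetilde S$; this should follow from the tree structure of \cite[Proposition 4.1]{BOEDIHARDJO_LYONS_2016720}, where removing an interior point of $\widetilde S$ disconnects $g$ from $h$. From that it must coincide with $\widetilde S$ as a set.
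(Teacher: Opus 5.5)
Your metric axioms, attainment and LAC arguments follow the paper's. Your pulled-back-partition argument for why loop erasure cannot increase $p$-variation is more explicit than the paper's. The genuine gap is the UAC step for an arbitrary $d^p$-continuous arc $A$ from $g$ to $h$. You flag it as the main obstacle but do not close it, and none of your three routes works as stated:
\begin{itemize}
\item Additivity $d^p(g,x)+d^p(x,h)=d^p(g,h)$ is false for $p>1$. Passing to the control $\omega$ of $\widetilde S$ only gives information on $[g,h]$ itself, not on a competing arc.
\item The claim that every $d^p$-continuous arc ``decomposes, via the branch-point structure, into finite-variation pieces'' is unsupported. Such an arc has no a priori $p$-variation bound, and the branch-point structure is only available among finite-$p$-variation arcs, so this presupposes what is to be shown.
\item Your fallback, that removing an interior point of $\widetilde S$ disconnects $g$ from $h$ by \cite[Proposition 4.1]{BOEDIHARDJO_LYONS_2016720}, cannot be imported as stated. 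The $\R$-tree there carries the height metric $D'$ or $\tilde D$ of \cref{lem:euivalent-topologies}, not $d^p$. You would first have to prove that these topologies agree, which you do not address.
\end{itemize}

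The idea you are missing is the paper's chain approximation. Parametrise $A$ by a homeomorphism $Z$ and, by uniform continuity, pick $0=t_0<\dots<t_n=1$ with $d^p(Z_{t_{i-1}},Z_{t_i})<\epsilon$. Join consecutive points by their unique finite-$p$-variation arcs; by monotonicity of $d^p$ along arcs, these lie in $B_{d^p}(Z_{t_{i-1}},\epsilon)$. Concatenate and loop-erase. Uniqueness forces the result to be $[g,h]$, and loop erasure does not leave the image of the concatenation. Hence every point of $[g,h]$ lies within $\epsilon$ of the compact set $A$ for every $\epsilon>0$, so $[g,h]\subseteq A$. Connectedness of $Z^{-1}([g,h])$ then gives $A=[g,h]$.

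Alternatively, your cut-point idea can be made self-contained. Using $[g,y']\subseteq[g,y]\cup[y,y']$ and monotonicity along arcs, the sets $\{y\neq x: x\in[g,y]\}$ and $\{y\neq x: x\notin[g,y]\}$ are both $d^p$-open. So an interior point $x$ of $[g,h]$ separates $g$ from $h$.

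Both routes rest on the restriction identity $d^p(\widetilde S_s,\widetilde S_t)=\pvi{\widetilde S}{p}{[s,t]}$, which holds because sub-arcs of the unique arc are the unique arcs between their endpoints. You use it implicitly but should state it. Geodesicity in the sense of \cref{def:p-geodesic} needs it too, since that requires $\pvd{\widetilde S}{p}{d^p}=d^p(g,h)$, with $p$-variation measured in $d^p$ rather than in $\tilde\rho^p_\infty$.
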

We will provide a proof for completeness after giving a comment on the choice of injective paths considered in the set of admissible paths.
\begin{remark} \label{rem:Injectivity-reason}
    Notice that the restriction to injective paths in \eqref{eq:Injective-set} is necessary to single out a unique minimiser of $d^p$, which is subsequently used to prove the UAC property in \cref{lem:Reformulation-Uniqueness-to-tree}.
    Indeed, contrary to the $p = 1$ case, the minimisers for $p>1$ need not be unique, ref.~\cite[Remark 4.1]{BOEDIHARDJO_LYONS_2016720} where \cite[Example 2.1]{Geng_2017_Reconstruction} and \cite[Example 3.1]{cass2024topologiesunparameterisedroughpath} show that certain retracings of path segments can leave the $p$-variation of a path unchanged. Therefore, $p$-variation alone can not distinguish the unique injective minimiser, and thus the injectivity condition needs to be imposed explicately.

    \cref{lem:Uniqueness-arcs-p-variation-limit} provides a unique arc under loop erasure of finite $p$-variation paths (up to reparametrisation), contrary to the case of general continuous paths as remarked after \cref{thm:loop-erasure}.  
    Letting $Y$ be a path of finite $p$-variation $G_{\rho^p\textnormal{-}p.r.c.}$, we denote the arc obtained by loop erasure as $Y^\circlearrowleft$ with the corresponding map by $\cdot^{\circlearrowleft} \colon Y \mapsto Y^\circlearrowleft$. Notice that $\cdot^{\circlearrowleft}$ constitutes a projection onto the arcs of finite $p$-variation. As loop erasure does not increase $p$-variation, it is possible to obtain the unique injective minimiser in \eqref{eq:Injective-set} by first solving the minimisation problem without imposing injectivity and then applying loop-erasure afterwards.

    Due to the UAC property of $(PG_{\rho^p\textnormal{-}p.r.c.},d^p)$, we may also employ the notation introduced in \cref{sec:Paths-over-pointed-metric-groups} to characterise any such arc of finite $p$-variation by its ordered pair of its start and endpoint. Then, one may define analogously to the tree-like equivalence in \cref{def:Tree-like-equivalence}, an equivalence of two pointed paths\footnote{Recall the paragraph on pointed paths in \cref{sec:Paths-over-pointed-metric-groups}}, i.e.~$X,Y$, of finite $p$-variation by
    \begin{equation*}
        X \operatorname{inj} Y :\Leftrightarrow (\rev X \sqcup Y)^\circlearrowleft = O \Leftrightarrow X_1 = Y_1
    \end{equation*}
    where $O \colon t \mapsto 1$ and the last equivalence follows by the uniqueness of the arcs of finite $p$-variation. One can then check that also the corresponding equivalence class $[X]_{\operatorname{inj}}$ (by understanding the arcs as the images of injective paths) will make the induced concatenation associative. We can then define the injective pointed paths of finite $p$-variation as the quotient $\cI^p := C^{p-\var}([0,1],G_{\rho^p\textnormal{-}p.r.c.})/\operatorname{inj}$ and notice that this set naturally constitutes a group under the induced concatenation under loop erasure as the group multiplication and the induced path reversal as the group inversion. The group identity is then given by $[O]_{\operatorname{inj}}$.
\end{remark}

\begin{proof}[Proof of \protect{\cref{lem:Reformulation-Uniqueness-to-tree}}]
    By \cref{lem:Uniqueness-arcs-p-variation-limit}, every finite $p$-variation path admits a loop-erasure with the same endpoints, not increasing $p$-variation, and unique up to reparametrisation. Hence $\mathcal{A}_{\operatorname{inj}}(g,h)$ for any $g,h $ in $PG_{\rho^p\textnormal{-}p.r.c.}$ is non-empty and consists of one reparametrisation class. Thus the minimum defining $d^p$ is attained and well-defined. \\
    The triangle inequality follows by concatenating the pairwise connecting injective arcs of $g,h,k$ from $g$ to $h$ and from $h$ to $k$ under loop-erasure, which does not increase the $p$-variation. \\

    Let $Y $ in $\mathcal{A}_{\operatorname{inj}}(g,h)$ be the unique injective representative, up to reparametrisation. Since for every $[s,t] \subseteq [0,1]$ the restriction $Y|_{[s,t]}$ is the unique representative between $Y_s$ and $Y_t$, it follows
    \begin{equation*}
        d^p(Y_s,Y_t) = \|Y\|_{p\textnormal{-}\var,[s,t]}
    \end{equation*}
    and hence using summing over partitions that
    \begin{align*}
        \pvd{Y}{p}{d^p}^p &= \sup_{\mathcal{D}} \sum_{t_i \in \mathcal{D}} d^p(Y_{t_{i-1}},Y_{t_i})^p \\
        &= \sup_{\mathcal{D}} \sum_{t_i \in \mathcal{D}} \|Y\|_{p\textnormal{-}\var,[t_{i-1},t_i]}^p \\
        &= \pv{Y}{p}^p = d^p(g,h)^p.
    \end{align*}
    Reparametrising $Y$ to be $1/p$-Hölder shows that $Y$ is a Hölderian geodesic w.r.t.~$p$-variation and hence $(PG_{\rho^p\textnormal{-}p.r.c.},d^p)$ is a $p$-variation geodesic space. \\

    To show the UAC and LAC property we use interpolation in the spirit of \cite[Section 5.2, Section 8.2]{Friz_Victoir_2010_fullbook} in the $p$-variation case. Let $[g,h] := Y([0,1])$ be the image of the unique injective representative $Y$ constructed before. For $k $ in $[g,h]$, we set $d^p(g,k) \leq d^p(g,h)$. Now, let $A$ be any arc from $g$ to $h$ that is continuous w.r. to ~the induced $d^p$-topology parametrised by a homeomorphism $Z \colon [0,1] \to A$. Since $[0,1]$ is compact, $Z$ is uniformly continuous. Then, for a given $\epsilon > 0$  by uniform continuity  we choose  $\delta > 0$ and correspondingly a  partition $\mathcal{D} : 0=t_0 < t_1 < ... < t_n = 1$ with $\max_{i = 1,...,n} |t_{i-1} - t_i| < \delta$. Then, $d^p(Z_{t_{i-1}},Z_{t_{i}}) < \epsilon$ and let $Y^{(i)}$ be the unique injective finite $p$-variation representative with arc $[Z_{t_{i-1}},Z_{t_i}] := Y^{(i)}([0,1])$ joining $Z_{t_{i-1}}$ and $Z_{t_i}$, for all $i \in \{1,...,n\}$. For any $i$, let $w \in [Z_{t_{i-1}},Z_{t_i}]$ and $u \in [0,1]$ such that $Y^{(i)}_u = w$. By monotonicity of $p$-variation along arcs, and since $Y^{(i)}|_{[0,u]}$ is the unique injective finite $p$-variation representative connecting $Z_{t_{i-1}}$ to $w$, we get
    \begin{equation} \label{eq:epsilon-ball}
        d^p(Z_{t_{i-1}},w) = \pvi{Y^{(i)}}{p}{[0,u]} \leq \pv{Y^{(i)}}{p} = d^{p}(Z_{t_{i-1}},Z_{t_i})< \epsilon.
    \end{equation}
    Hence, $[Z_{t_{i-1}},Z_{t_i}] \subset B_{d^p}(Z_{t_{i-1}},\epsilon)$. Now, consider the path $Z^{(\epsilon)} = (Y^{(1)} \sqcup ...) \sqcup Y^{(n)}$, which has finite $p$-variation by sub-additivity. Furthermore, due to \eqref{eq:epsilon-ball} we get
    \begin{equation*}
        Z^{(\epsilon)}([0,1]) \subseteq \bigcup_{i = 1}^n B_{d^p}(Z_{t_{i-1}},\epsilon).
    \end{equation*}
    Now consider the loop erasure $(Z^{(\epsilon)})^\circlearrowleft$ giving an injective arc of finite $p$-variation connecting $g$ and $h$. By \cref{thm:loop-erasure} we have $(Z^{(\epsilon)})^\circlearrowleft([0,1]) \subseteq Z^{(\epsilon)}([0,1])$. By \cref{lem:Uniqueness-arcs-p-variation-limit} (also ref.~\cite[Proof of Lemma 4.6]{BOEDIHARDJO_LYONS_2016720}) we have $(Z^{(\epsilon)})^\circlearrowleft([0,1]) = Y([0,1]) = [g,h]$ and hence
    \begin{equation} \label{eq:inclusion-of-injective-representative}
        [g,h] \subseteq \bigcup_{i = 1}^n B_{d^p}(Z_{t_{i-1}},\epsilon).
    \end{equation}
    Recall that by construction $Z_{t_i} \in A$. Hence, for any $k \in [g,h]$ and $\epsilon > 0$ it follows by \eqref{eq:inclusion-of-injective-representative} that there is an $i \in \{1,...,n\}$ such that $d^p(k,Z_{t_{i-1}}) < \epsilon$. Hence, we have the distance
    \begin{equation} \label{eq:vanishing-distance}
        \operatorname{dist}(k,A) := \inf_{a \in A} d^p(k,a) = 0.
    \end{equation}
    Since $A = Z([0,1])$ is compact (since $Z$ is continuous) w.r.t.~the topology induced by $d^p$,  it is closed and hence \eqref{eq:vanishing-distance} implies $k \in \overline{A} = A$. Since $k$ was any point of $[g,h]$, it follows that $[g,h] \subseteq A$. But $A$ was an arc from $g$ to $h$, $Z$ a homeomorphism with $Z_0 = g$ and $Z_1 = h$ and $[g,h]$ is pathwise connected containing $g$ and $h$. Thus, $Z^{-1}([g,h]) \subseteq [0,1]$ is connected and contains $\{0\}$ and $\{1\}$. Therefore, $Z^{-1}([g,h]) = [0,1]$. Hence, $[g,h] = A$ and so $PG_{\rho^p\textnormal{-}p.r.c.}$ is UAC. \\
    Finally, $d^p$-balls are arcwise connected. Indeed, take any $g \in PG_{\rho^p\textnormal{-}p.r.c.}$ and let $h,k\in B_{d^p}(g,r)$. Then, the set $[g,h] \cup [g,k]$ is arcwise connected with the common point $g$. Hence,
    \begin{equation*}
        [h,k]\subseteq [g,h]\cup[g,k].
    \end{equation*}
    If $l \in [h,k]$, then $l\in[g,h]$ or $l\in[g,k]$. In the first case,
    \begin{equation*}
        d^p(g,l)\leq d^p(g,h) < r,
    \end{equation*}
    since $[g,l] \subseteq [g,h]$ by monotonicity. Analogously in the other case. Hence, $[h,k]\subseteq B_{d^p}(g,r)$, so $B_{d^p}(g,r)$ is arcwise connected and $PG_{\rho^p\textnormal{-}p.r.c.}$ is LAC.
\end{proof}
Since $p$-variation for $p > 1$ can fail to capture local information, any non-injective minimiser includes a tree-like piece as can be seen from the examples mentioned in \cref{rem:Injectivity-reason}. In fact, by \cref{lem:Uniqueness-arcs-p-variation-limit} the set $\mathcal{A}_{\operatorname{inj}}(g,h)$ for any $g \neq h$ consists of only one element (up to reparametrisation), namely the minimiser.
\begin{remark}[comp.~\protect{\cite[Remark 4.1]{BOEDIHARDJO_LYONS_2016720}}, \protect{\cite[Example 3.1]{cass2024topologiesunparameterisedroughpath}}]
    The correspondence between \emph{tree-reduced} paths on $WG\Omega^p$ and the corresponding lifts with values in $G_{\rho^p\textnormal{-}p.r.c.}(V)$ is defined to be
    \begin{equation*}
        X \text{ is tree-reduced} : \Longleftrightarrow \cL(X) \text{ is injective},
    \end{equation*}
    due to the lack of uniqueness for the $p$-variation minimiser, in contrast to the $p=1$ case (ref.~\cite{HamblyLyonsUniqueness_2010}). Notice that tree-like reduction, that is, deletion of tree-like pieces in paths corresponds to the erasure of loops of the corresponding lifted path. We shall denote the map that deletes tree-like pieces of a path $X$ of finite $p$-variation in $WG\Omega^p$ by $\cdot^\tau \colon X \mapsto X^\tau$.
\end{remark}
Lastly, it is the main result of \cite{BOEDIHARDJO_LYONS_2016720} to characterise the kernel of the signature.
\begin{theorem}[comp.~\protect{\cite[Theorem 1.1]{BOEDIHARDJO_LYONS_2016720}}] \label{prop:Kernel-signature-characterisation}
    Let $S$ be the signature as defined before. Then
    \begin{equation*}
        \ker S = \{X \in C^{p-\var}([0,1],G^{{\floor{p}}}(V)) \mid X \text{ is tree-like}\},
    \end{equation*}
    where tree-like is to be understood in the sense of \cref{def:Tree-like-equivalence}. We write the corresponding equivalence classes then $[X]_\tau \in \mathcal{C}^p := WG\Omega^p / \sim$ and $[Y]_\circlearrowleft \in \cI^p$. 
\end{theorem}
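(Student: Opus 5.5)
Both inclusions will be proved separately, with the lifted path $\cL(X)$ in $G_{\rho^p\textnormal{-}p.r.c.}(V)$ as the central object. By \cref{lem:Homomorphism-And-Image-of-endpoint-map}, $S = \eva_1 \circ \cL$ satisfies Chen's relation, reverses under path reversal, and is invariant under ordered reparametrisations.

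\emph{Tree-like paths lie in the kernel.} Let $X = \psi\circ\phi$ with $\phi\colon[0,1]\to\tau$ into an $\R$-tree and $\phi(0)=\phi(1)$. The natural strategy is to approximate $\phi$ by finite trees. We pick partitions of $[0,1]$ and replace $\phi$ by the piecewise-geodesic walk in $\tau$ through the points $\phi(t_i)$. This walk lives in a finite subtree and is therefore a finite concatenation of segments of the form $\rev{A}\sqcup A$. On each such piece the signature is trivial by Chen's relation: $S(\rev A\sqcup A)=S(A)^{-1}S(A)=1$.

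The difficulty is that $\psi$ is only a map with $X=\psi\circ\phi$, so the approximating paths must be realised in $G^{\floor p}(V)$ with uniformly bounded $p$-variation. I would follow the strategy of \cite{BOEDIHARDJO_LYONS_2016720}. First reduce to the case where $\tau$ is the minimal tree spanned by $\phi$, equipped with a metric pulled back from the $p$-variation control of $X$. Then replace $X$ by $\psi$ composed with the approximating walks. This gives paths $X^{(n)}$ with uniformly bounded $p$-variation, trivial signature, and pointwise convergence to $X$. Continuity of the Lyons lift in $q$-variation for $q>p$, via interpolation, then yields $S(X)=\lim S(X^{(n)})=1$ levelwise.

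\emph{Kernel elements are tree-like.} Suppose $S(X)=1$. Then $Y:=\cL(X)$ is a loop at $1$ of finite $p$-variation in $PG_{\rho^p\textnormal{-}p.r.c.}$. By \cref{lem:Reformulation-Uniqueness-to-tree}, $(PG_{\rho^p\textnormal{-}p.r.c.},d^p)$ is a topological tree. By \cref{thm:Height-Function-existence-Rtree-Toptree}, it carries a compatible $\R$-tree metric $d_H$ rooted at $1$. Set $\tau$ to be the image of $Y$, which is a subtree because the arcs are unique, and take $\phi:=Y$. Then $\phi(0)=\phi(1)=1$, and $\phi$ is continuous since the $d^p$-topology agrees with the $d_H$-topology and $Y$ is $d^p$-continuous by \cref{lem:Uniqueness-arcs-p-variation-limit}. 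Finally let $\psi:=\Pi_{\floor p}|_\tau$. Because $\Pi_{\floor p}\circ\cL(X)=X$, we get $X=\psi\circ\phi$, so $X$ is tree-like.

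The fact that $S$ is a homomorphism, together with this characterisation, also shows that $\sim$ is the kernel congruence. It is therefore transitive, and $[X]_\tau$ and $[Y]_\circlearrowleft$ are well defined.

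The main obstacle is the first inclusion: uniform $p$-variation control of the finite-tree approximations when $\psi$ is not assumed regular. I would first try to show that $\psi$ is automatically continuous on $\phi([0,1])$ with respect to the tree-quotient of the control $\control_X$, which is possible because $\phi$ is surjective onto its image and $[0,1]$ is compact. Approximation would then be carried out along the tree metric induced by $\control_X$.
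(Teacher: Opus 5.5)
The paper does not prove this statement; it imports it from \cite{BOEDIHARDJO_LYONS_2016720}. Read as a self-contained argument, your proposal has a gap in each direction.

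\textbf{Tree-like implies trivial signature.} Your key claim is that the approximants $X^{(n)}=\psi\circ\gamma_n$ have uniformly bounded $p$-variation. You assert this but never prove it, and the route you suggest does not give it. Continuity of $\psi$ on $\phi([0,1])$ is indeed automatic, since $\phi$ is a closed continuous surjection onto its image and hence a quotient map. But continuity gives no $p$-variation bound, and no metric pulled back from $\control_X$ is needed.

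The missing idea is a monotone first-hitting-time reparametrisation. In an $\R$-tree the image of $\phi|_{[t_i,t_{i+1}]}$ contains the segment $\gamma$ from $\phi(t_i)$ to $\phi(t_{i+1})$. Let $\sigma_i(u)$ be the first time in $[t_i,t_{i+1}]$ at which $\phi$ hits $\gamma(u)$. Then $\sigma_i$ is non-decreasing: for $u<u'$ the point $\gamma(u)$ lies on the segment from $\phi(t_i)$ to $\gamma(u')$, so $\phi$ must pass through $\gamma(u)$ before it first reaches $\gamma(u')$.

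Consequently $X^{(n)}=X\circ\Sigma_n$ with $\Sigma_n$ non-decreasing and $\Sigma_n([t_i,t_{i+1}])\subseteq[t_i,t_{i+1}]$. This gives $\pv{X^{(n)}}{p}\le\pv{X}{p}$ and uniform convergence $X^{(n)}\to X$ at once. The same device shows that $\psi\circ\gamma_{v,w}$ has finite $p$-variation for the geodesic $\gamma_{v,w}$ between any two points of $\phi([0,1])$. So $F(v,w):=S(\psi\circ\gamma_{v,w})$ is defined, and Chen's relation applied through the median of three points gives $F(a,b)F(b,c)=F(a,c)$. This cocycle identity is the correct form of your ``concatenation of $\rev{A}\sqcup A$'' claim, since the walk is nested rather than a plain concatenation. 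Your interpolation step with $p<q<\floor{p}+1$ then finishes this direction.

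\textbf{Trivial signature implies tree-like.} Your deduction from \cref{lem:Reformulation-Uniqueness-to-tree} is formally sound:
$\pvi{Y}{p}{[s,t]}\ge d^p(Y_s,Y_t)$ because loop erasure does not increase $p$-variation; Mayer--Oversteegen supplies the $\R$-tree metric; and $\psi=\Pi_{\floor p}$ does the rest. Within the paper's ordering this is admissible, since the lemma is also imported and stated earlier.

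However, the argument contains none of the substance of the theorem. All of it sits in the uniqueness part of \cref{lem:Uniqueness-arcs-p-variation-limit}, and that lemma is equivalent to the inclusion you are proving. To see this, suppose $\tilde S,\hat S$ are injective finite-$p$-variation arcs from $g$ to $h$.
\begin{itemize}
\item The loop $\rev{\tilde S}\sqcup\hat S$, translated to start at $1$, is the unique lift of its projection to $G^{\floor p}(V)$, so that projection has trivial signature.
\item The kernel characterisation makes the projection tree-like.
\item The easy direction then shows the loop itself factors through the same map $\phi$.
\item This forces $\tilde S$ and $\hat S$ to have the same image.
\end{itemize}
So you have reduced the theorem to an equivalent statement rather than proved it. The reduction is non-circular only if arc uniqueness is established in \cite{BOEDIHARDJO_LYONS_2016720} independently of their Theorem~1.1. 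You would have to check this, because the natural proof of arc uniqueness is exactly the argument above. What is genuinely missing is the mechanism showing that a finite-$p$-variation loop in $G_{\rho^p\textnormal{-}p.r.c.}(V)$ (equivalently, a rough path with trivial signature) is tree-like.
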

\begin{remark}
  With some abuse of notation, we note that $\cL \colon \mathcal{C}^p \to \cI^p$ yields a group isomorphism.
\end{remark}
We term $\mathcal{C}^p$ the reduced path group (of weakly geometric rough paths), which can be identified with the signature group $SG^p$.
\begin{lemma} \label{lem:Image-of-ev1}
    Equip $\cI^p$ with the metric
    \begin{equation*}
        D^p([X]_\circlearrowleft,[Y]_{\circlearrowleft}) = \pv{\left((\rev{X})^\circlearrowleft\sqcup Y^\circlearrowleft\right)^\circlearrowleft}{p}\ , \qquad \forall [X]_\circlearrowleft,[Y]_{\circlearrowleft} \in \cI^p.
    \end{equation*}
    Then, $\eva_1|_{\cI^p} \colon (\cI^p,D^p) \to (G_{\rho^p\textnormal{-}p.r.c.},d^p)$ is an isometry onto its image, which is given as $SG^p = PG_{\rho^p\textnormal{-}p.r.c.}$.
\end{lemma}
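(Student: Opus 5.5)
The strategy is to reduce everything to the uniqueness of finite $p$-variation arcs from \cref{lem:Uniqueness-arcs-p-variation-limit} and to the geodesic structure established in \cref{lem:Reformulation-Uniqueness-to-tree}.

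First I will check that $\eva_1|_{\cI^p}$ is well defined and injective. By \cref{rem:Injectivity-reason}, $X \operatorname{inj} Y$ holds if and only if $X_1 = Y_1$. Hence $[X]_\circlearrowleft \mapsto X_1$ is well defined, and injectivity is immediate. It is a group homomorphism because the product in $\cI^p$ is loop-erased concatenation, and loop erasure preserves endpoints. Thus $\eva_1$ of a product is $X_1 Y_1$, and $\eva_1([\rev X]_\circlearrowleft) = X_1^{-1}$.

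Next comes the isometry property. Set $W := ((\rev X)^\circlearrowleft \sqcup Y^\circlearrowleft)^\circlearrowleft$. This is an injective path of finite $p$-variation, starting at $1$ and ending at $X_1^{-1}Y_1$. By the uniqueness part of \cref{lem:Uniqueness-arcs-p-variation-limit}, $W$ is, up to reparametrisation, the unique element of $\mathcal{A}_{\operatorname{inj}}(1, X_1^{-1}Y_1)$. Reparametrisation does not change $p$-variation, so
\[
D^p([X]_\circlearrowleft,[Y]_\circlearrowleft) = \pv{W}{p} = d^p(1, X_1^{-1}Y_1).
\]
It then remains to check left invariance of $d^p$, namely $d^p(1, g^{-1}h) = d^p(g,h)$. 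Left translation by $g$ maps injective paths to injective paths. It also preserves $p$-variation, since $\rho^p_k$ is left-invariant at each level and the $p$-variation on $G_{\rho^p\textnormal{-}p.r.c.}$ is computed via the limit metric $\tilde\rho^p_\infty$, which is left-invariant. Consequently it maps $\mathcal{A}_{\operatorname{inj}}(1,g^{-1}h)$ bijectively onto $\mathcal{A}_{\operatorname{inj}}(g,h)$. This gives $D^p([X]_\circlearrowleft,[Y]_\circlearrowleft) = d^p(X_1,Y_1)$, and finiteness of $D^p$ shows the image lies in $PG_{\rho^p\textnormal{-}p.r.c.}$.

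Finally comes the identification of the image, which I expect to be the main obstacle; I will prove two inclusions. For $SG^p \subseteq \eva_1(\cI^p)$, take $g = S(X)$ with $X \in WG\Omega^p$. Then $\cL(X)$ is a path of finite $p$-variation in $G_{\rho^p\textnormal{-}p.r.c.}$ (using \cref{rem:LeDonneZuest-proof-2.6-and-2.7} and passing to the limit), so $g = \eva_1([\cL(X)]_\circlearrowleft)$. For $\eva_1(\cI^p) \subseteq SG^p$, take a finite $p$-variation path $Y$ in $G_{\rho^p\textnormal{-}p.r.c.}$ from $1$. Project it to $X := \Pi_{\floor p}\circ Y$, which has finite $p$-$\rho^p_{\floor p}$-variation because $\Pi_{\floor p}$ is $1$-Lipschitz; hence $X \in WG\Omega^p$. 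By the uniqueness clause of the lifting property in \cref{rem:LeDonneZuest-proof-2.6-and-2.7}(c), extended to $\pi^\infty_{\floor p}$, we get $Y = \cL(X)$ and thus $Y_1 = S(X) \in SG^p$. The same argument, applied to a point $g$ with $d^p(1,g)<\infty$ and the minimising arc from $1$ to $g$, yields $PG_{\rho^p\textnormal{-}p.r.c.} \subseteq SG^p$. Conversely, $\eva_1(\cI^p) \subseteq PG_{\rho^p\textnormal{-}p.r.c.}$ holds by loop erasure, which does not increase $p$-variation.

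The delicate point is justifying the unique lifting into the full group-like elements. Here I will rely on Lyons' extension theorem together with the inverse-limit lifting property $\pi^\infty_k$, and on the identification of $\tilde\rho^p_\infty$ with the inverse-limit metric via $\Pi^p$.
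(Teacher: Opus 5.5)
Your proposal is correct and follows essentially the same route as the paper: the isometry comes from the uniqueness of injective finite $p$-variation arcs (\cref{lem:Uniqueness-arcs-p-variation-limit}), and the identification $\eva_1(\cI^p)=SG^p=PG_{\rho^p\textnormal{-}p.r.c.}$ comes from two steps. One direction projects to level $\floor{p}$ and invokes uniqueness of the lift; the other uses $\cL(X)$ together with loop erasure. You are in fact slightly more careful than the paper, which calls the pointed path $(\rev{X^\circlearrowleft}\sqcup Y^\circlearrowleft)^\circlearrowleft$ ``the injective path joining $x$ to $y$'' and thereby silently uses the left-invariance $d^p(1,x^{-1}y)=d^p(x,y)$ that you verify explicitly.
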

\begin{proof}
   By definition of \(\mathcal I^p\), the image of \(\eva_1\) is precisely the finite \(p\)-variation path component of the identity, i.e.
    \begin{equation*}
        \eva_1(\mathcal I^p)=PG_{\rho^p\textnormal{-}p.r.c.}.
    \end{equation*}
    It remains to identify this set with \(SG^p\).\\
    Let $x:=X^\circlearrowleft_1$ and $y:=Y^\circlearrowleft_1$. By definition of $d^p$, the distance $d^p(x,y)$ is the $p$-variation of the injective path joining $x$ to $y$. This injective path is $$\left(\rev{X^\circlearrowleft}\sqcup Y^\circlearrowleft\right)^\circlearrowleft.$$
    Hence
    \begin{equation*}
        d^p(\eva_1([X]_\circlearrowleft),\eva_1([Y]_\circlearrowleft))
        = d^p(x,y)
        = \pv{(\rev{X^\circlearrowleft}\sqcup Y^\circlearrowleft)^\circlearrowleft}{p}
        = D^p([X]_\circlearrowleft,[Y]_{\circlearrowleft}).
    \end{equation*}
    Thus $\eva_1|_{\cI^p}$ is an isometric embedding.

    Let $[X]_\circlearrowleft$ belong to $\cI^p$, so that $X^\circlearrowleft$ is a finite $p$-variation path in $G_{\rho^p\textnormal{-}p.r.c.}$. Its projection $\Pi_{\floor p}\circ X^\circlearrowleft$ is therefore a weakly geometric $p$-rough path. By the unique $p$-path lifting property, $X^\circlearrowleft$ is the unique lift of this projected rough path to $G_{\rho^p\textnormal{-}p.r.c.}$. Hence
    \begin{equation*}
        X^\circlearrowleft_1 = S(\Pi_{\floor p}\circ X^\circlearrowleft)
    \end{equation*}
    and $\eva_1(\cI^p)\subseteq SG^p$.

    Conversely, given $g$ in $ SG^p$, there exists $X\in WG\Omega^p$ with $S(X)=g$.
    The lifted path $\cL(X)$ is a finite $p$-variation path in $G_{\rho^p\textnormal{-}p.r.c.}$ and loop erasure in $G_{\rho^p\textnormal{-}p.r.c.}$ gives an injective representative $\cL(X)^\circlearrowleft$ with the same endpoints. Hence $[\cL(X)]_\circlearrowleft\in\cI^p$ and
    \begin{equation*}
        \eva_1([\mathcal L(X)]_\circlearrowleft)=g.
    \end{equation*}
    Thus $\eva_1(\cI^p)=SG^p$.
\end{proof}
In the same way as above, we can find the corresponding metric on $\mathcal{C}^p$ using $S^{-1}$ to be
\begin{equation} \label{eq:Tree-metric-on-reduced-path-group}
    \delta^p([X]_\tau,[Y]_\tau) = \pv{(\rev{X}^\tau \sqcup Y^\tau)^{\tau}}{p}{},
\end{equation}
such that $\cL$ considered on the quotients will be an isometry as well. \\

\paragraph{\textbf{Diagram.}} Summing up, this establishes the right triangle of the diagram \eqref{eq:Diagram-p-variation-LZ}. \\

\paragraph{\textbf{The choice of the $\R$-tree metric.}} \cref{lem:Reformulation-Uniqueness-to-tree} showed that $(PG_{\rho^p\textnormal{-}p.r.c.}(V),d^p)$ is a topological tree.
From \cref{thm:Toptree-Rtree} it  then follows that there is a corresponding $\mathbb R$-tree of the form shown in \cref{thm:Height-Function-existence-Rtree-Toptree}, which will make the resulting metric space geodesic.

\begin{proposition} \label{prop:Tree-metric-p=1-case}
Given $[X]_\circlearrowleft,[Y]_{\circlearrowleft}$ in $\cI^1$, we have
    \begin{equation*}
        D^1([X]_\circlearrowleft,[Y]_{\circlearrowleft}) = \pv{(\rev X^\circlearrowleft \sqcup Y^\circlearrowleft)^\circlearrowleft}{1} = \pv{X^\circlearrowleft}{1} + \pv{Y^\circlearrowleft}{1} - 2 \pv{X^\circlearrowleft \land Y^\circlearrowleft}{1}
    \end{equation*}
    with height function $H(X) = \pv{X}{1}$.
\end{proposition}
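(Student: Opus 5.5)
The first equality is just the definition of $D^1$ from \cref{lem:Image-of-ev1} with $p=1$, so the content is the second equality together with the claim that $H(X)=\pv{X}{1}$ is a height function in the sense of \cref{thm:Height-Function-existence-Rtree-Toptree}. I will work in the image under $\eva_1$. There $X^\circlearrowleft$ and $Y^\circlearrowleft$ are the unique injective finite $1$-variation arcs $[1,x]$ and $[1,y]$ in the topological tree $(PG_{\rho^1\textnormal{-}p.r.c.},d^1)$ of \cref{lem:Reformulation-Uniqueness-to-tree}, with $x=X^\circlearrowleft_1$ and $y=Y^\circlearrowleft_1$. By UAC the branch point $z:=x\land_1 y$ is well defined, with $[1,x]\cap[1,y]=[1,z]$, and the branch arc $X^\circlearrowleft\land Y^\circlearrowleft$ parametrises $[1,z]$.

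\textbf{Step 1: describe the reduced arc.} I claim that $[x,y]=[x,z]\cup[z,y]$ and that these two pieces meet only in $z$. The concatenation $\rev{X^\circlearrowleft}\sqcup Y^\circlearrowleft$ runs $x\to z\to 1\to z\to y$, and the portion $z\to1\to z$ is a retracing of $[1,z]$. So after loop erasure the path $(\rev{X^\circlearrowleft}\sqcup Y^\circlearrowleft)^\circlearrowleft$ is a reparametrisation of the subarc of $X^\circlearrowleft$ from $x$ back to $z$, followed by the subarc of $Y^\circlearrowleft$ from $z$ to $y$. Rigorously: this concatenation is injective, because a common point of $[x,z]\setminus\{z\}$ and $[z,y]\setminus\{z\}$ would lie in $[1,x]\cap[1,y]=[1,z]$, hence would be $z$. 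It has finite $1$-variation and connects $x$ to $y$, so the uniqueness in \cref{lem:Uniqueness-arcs-p-variation-limit} identifies it with the loop erasure.

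\textbf{Step 2: additivity of $1$-variation.} For $p=1$ the inequalities \eqref{eq:p-Variation-inequalities-super} and \eqref{eq:p-Variation-inequalities-sub} become equalities, so $1$-variation is additive under concatenation and invariant under reversal. Applying this to the description in Step 1 gives
\[
\pv{(\cdot)^\circlearrowleft}{1}=\pvi{X^\circlearrowleft}{1}{[s_0,1]}+\pvi{Y^\circlearrowleft}{1}{[u_0,1]},
\]
where $s_0$ and $u_0$ are the times at which $X^\circlearrowleft$ and $Y^\circlearrowleft$ hit $z$. By uniqueness of arcs, $X^\circlearrowleft|_{[0,s_0]}$ and $Y^\circlearrowleft|_{[0,u_0]}$ are both reparametrisations of the branch arc, so each has $1$-variation $\pv{X^\circlearrowleft\land Y^\circlearrowleft}{1}$. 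Additivity on $[0,s_0]\cup[s_0,1]$ then gives
\[
\pvi{X^\circlearrowleft}{1}{[s_0,1]}=\pv{X^\circlearrowleft}{1}-\pv{X^\circlearrowleft\land Y^\circlearrowleft}{1},
\]
and the same holds for $Y$. Adding the two yields the formula.

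\textbf{Step 3: height function.} I would check that $H$ is continuous and strictly increasing along branches. For $k\in[1,x]$ we have $H$ of the arc $[1,k]$ equal to $d^1(1,k)$, which is $1$-Lipschitz, hence continuous. It is strictly increasing along the arc because $\pvi{X}{1}{[0,t]}$ is strictly increasing in $t$ on an injective path. Then $d_H=D^1$ by Step 2.

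\textbf{Main obstacle.} The delicate point is Step 1, namely justifying that loop erasure removes exactly the retraced segment $[1,z]$ and nothing else. This is where I lean on UAC, the identity $[1,x]\cap[1,y]=[1,z]$, and the uniqueness of \cref{lem:Uniqueness-arcs-p-variation-limit}, rather than analysing the loop-erasure procedure directly.
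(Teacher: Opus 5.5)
Your proposal is correct and follows essentially the same route as the paper: use UAC to write $X^\circlearrowleft$ and $Y^\circlearrowleft$ as the branch arc followed by tails $A$ and $B$, observe that loop erasure of $\rev{X^\circlearrowleft}\sqcup Y^\circlearrowleft$ leaves $\rev{A}\sqcup B$, and conclude by additivity of $1$-variation. Your Step 1 (injectivity of the tail concatenation plus uniqueness of arcs) and Step 3 (continuity and strict monotonicity of $H$) make explicit what the paper only asserts. You also tacitly identify the pointed concatenation, which runs from $1$ to $x^{-1}y$, with its left translate by $x$; as in the paper, this is harmless by left invariance.
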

We kept the $\cdot^\circlearrowleft$ for notational consistency, but may very well drop it in the $p=1$ case.
\begin{proof}
    The proof follows by additivity shown in \cref{sec:Canonical-metric-relations-goals}. Let both $X^\circlearrowleft,Y^\circlearrowleft$ be pointed injective paths of finite $1$-variation and the representatives of their respective equivalence class, with $X^\circlearrowleft_1 = x$ and $Y^\circlearrowleft_1 = y$. 
    By the UAC property, we have $X^\circlearrowleft = (X^\circlearrowleft\land Y^\circlearrowleft) \sqcup A$ and $Y^\circlearrowleft = (X^\circlearrowleft \land Y^\circlearrowleft) \sqcup B$ with $A([0,1]) \cap B[(0,1)] = \{x \land y\}$.
    Both $A$ and $B$ are injective paths of finite $1$-variation, and a direct computation yields
    \begin{equation*}
        \pv{X^\circlearrowleft}{1} = \pv{X^\circlearrowleft\land Y^\circlearrowleft}{1} + \pv{A}{1}, \qquad \pv{Y^\circlearrowleft}{1}{} = \pv{X^\circlearrowleft\land Y^\circlearrowleft}{1} + \pv{B}{1}.
    \end{equation*}
    Loop erasure implies that the path $X^\circlearrowleft \land Y^\circlearrowleft$ does not contribute as it is traversed twice and using the invariance of $1$-variation under $\rev{\cdot}$, we have
    \begin{equation*}
        \begin{aligned}
            \pv{(\rev X^\circlearrowleft \sqcup Y^\circlearrowleft)^\circlearrowleft}{1} &= \pv{\rev A \sqcup B}{1} = \pv{A}{1} + \pv{B}{1} \\
            &= \pv{X^\circlearrowleft}{1} + \pv{Y^\circlearrowleft}{1} - 2 \pv{X^\circlearrowleft\land Y^\circlearrowleft}{1}
        \end{aligned}
    \end{equation*}
    as claimed.
\end{proof}
It follows that the natural candidate for the height function in the case of $p$-variation is either $H(X^\circlearrowleft) = \pvd{X^\circlearrowleft}{p}{}^p$ or $\tilde H(X^\circlearrowleft) = \pvd{X^\circlearrowleft}{p}{}$ by the monotonicity properties of $p$-variation laid out in \cref{sec:Paths-over-pointed-metric-groups}. Due to the sub-additivity of $p$-variation or, equivalently, the super-additivity of the corresponding control, the proof of \cref{prop:Tree-metric-p=1-case} does not hold for $D^p$. However, it can be seen that the failure is caused by the lack of additivity of $p$-variation along arcs, which was already pointed out in \cref{sec:Paths-over-pointed-metric-groups}. Making the choice $H$ (or $\tilde H$) forces the additivity of $p$-variation along arcs at the branching points and one obtains the metric $D'$ (or $\tilde D$) as shown in \cite[Proposition 4.1]{BOEDIHARDJO_LYONS_2016720}.
\begin{lemma} \label{lem:euivalent-topologies}
    The metrics $D^p$ and
    \begin{equation*}
        \begin{aligned}
        D'([X]_\circlearrowleft,[Y]_{\circlearrowleft}) &= \pv{X^\circlearrowleft}{p}^p + \pv{Y^\circlearrowleft}{p}^p - 2 \pv{X^\circlearrowleft\land Y^\circlearrowleft}{p}^p\,, \\ 
        \tilde D([X]_\circlearrowleft,[Y]_{\circlearrowleft}) &= \pv{X^\circlearrowleft}{p} + \pv{Y^\circlearrowleft}{p} - 2 \pv{X^\circlearrowleft\land Y^\circlearrowleft}{p}
        \end{aligned}
    \end{equation*}
    induce the same topology.
\end{lemma}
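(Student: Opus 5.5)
We prove the statement by comparing all three metrics with a single auxiliary quantity along the unique arcs of \cref{lem:Reformulation-Uniqueness-to-tree}. Write $x=X^\circlearrowleft_1$, $y=Y^\circlearrowleft_1$, $z=x\land y$, and let $A$, $B$ be the injective arcs from $z$ to $x$ and from $z$ to $y$, so that $X^\circlearrowleft=(X^\circlearrowleft\land Y^\circlearrowleft)\sqcup A$ and $Y^\circlearrowleft=(X^\circlearrowleft\land Y^\circlearrowleft)\sqcup B$ as in the proof of \cref{prop:Tree-metric-p=1-case}. By UAC, $(\rev X^\circlearrowleft\sqcup Y^\circlearrowleft)^\circlearrowleft=\rev A\sqcup B$. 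Subadditivity \eqref{eq:p-Variation-inequalities-sub}, superadditivity \eqref{eq:p-Variation-inequalities-super} and reversal invariance then give the two-sided estimate
\[
\max\{\pv{A}{p},\pv{B}{p}\}\le D^p([X]_\circlearrowleft,[Y]_\circlearrowleft)\le \pv{A}{p}+\pv{B}{p}.
\]
The same two inequalities, applied to the splittings of $X^\circlearrowleft$ and $Y^\circlearrowleft$ at $z$, yield
\[
\pv{A}{p}^p+\pv{B}{p}^p\le D'\le \bigl(h+\pv{A}{p}\bigr)^p+\bigl(h+\pv{B}{p}\bigr)^p-2h^p ,
\]
where $h=\pv{X^\circlearrowleft\land Y^\circlearrowleft}{p}$. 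Analogously, $\tilde D\le \pv{A}{p}+\pv{B}{p}$, and since $\bigl(h^p+a^p\bigr)^{1/p}-h$ is increasing in $a$, $\tilde D$ is bounded below by $\bigl(h^p+\pv{A}{p}^p\bigr)^{1/p}-h+\bigl(h^p+\pv{B}{p}^p\bigr)^{1/p}-h$.

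Topologies are determined by convergent sequences in metric spaces, so it suffices to show that for a fixed $[X]_\circlearrowleft$ and a sequence $[Y_n]_\circlearrowleft$ the three conditions $D^p\to0$, $D'\to0$, $\tilde D\to0$ are equivalent. The branch heights satisfy $h_n\le H:=\pv{X^\circlearrowleft}{p}$, which is fixed and finite. The upper bounds are continuous in $(\pv{A_n}{p},\pv{B_n}{p})$ and vanish at $(0,0)$, uniformly for $h_n\in[0,H]$. Hence $\pv{A_n}{p}+\pv{B_n}{p}\to0$ forces all three metrics to tend to zero.

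The converse is the main obstacle. The lower bounds degenerate in the variables $(h,a)$: the one for $D'$ is already fine, but the one for $\tilde D$ behaves like $a^p/(p h^{p-1})$ when $h$ is large. This is still not a problem, because $h_n\le H$ is bounded. For fixed $h\in[0,H]$ the map $a\mapsto (h^p+a^p)^{1/p}-h$ is strictly increasing and vanishes only at $a=0$, and it is jointly continuous in $(h,a)$. A compactness argument on $h\in[0,H]$ then gives a modulus $\eta(\epsilon)>0$ with
\[
a\ge\epsilon\implies (h^p+a^p)^{1/p}-h\ge\eta(\epsilon)\quad\text{uniformly in } h\in[0,H].
\]
Consequently each of $D^p\to0$, $D'\to0$, $\tilde D\to0$ implies $\pv{A_n}{p},\pv{B_n}{p}\to0$, which closes the cycle of implications.

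One should double check that the closed arc pieces used above are genuinely the unique arcs, so that the $p$-variations of $A$ and $B$ are those appearing in $D^p$. This follows from the UAC property and the fact, used in the proof of \cref{lem:Reformulation-Uniqueness-to-tree}, that restrictions of the injective representative are again the injective representatives between their endpoints.
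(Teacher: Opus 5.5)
Your proposal is correct and follows essentially the paper's argument: you split $X^\circlearrowleft=Z\sqcup A$ and $Y^\circlearrowleft=Z\sqcup B$ at the branch point, bound $\max\{\pv{A}{p},\pv{B}{p}\}\le D^p\le \pv{A}{p}+\pv{B}{p}$ via sub- and superadditivity, and run a sequential argument that uses the bound $\pv{Z_n}{p}\le\pv{X^\circlearrowleft}{p}$. The one difference is minor: you route $\tilde D$ through $\pv{A_n}{p}+\pv{B_n}{p}$ with an explicit uniform modulus, whereas the paper compares $\tilde D$ and $D'$ term by term using continuity and strict monotonicity of $r\mapsto r^p$ on bounded intervals.
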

The proof uses the sub-additivity property of $p$-variation and the super-additivity property of the control. Recall that for a set $E$ with metrics $d$ and $d'$, the induced topologies agree if convergence of sequences w.r.t.~$d$ implies convergence w.r.t.~$d'$ and vice versa.
\begin{proof}
    Let both $X^\circlearrowleft,Y^\circlearrowleft$ be pointed injective paths of finite $p$-variation and the representatives of their respective equivalence class, with $X^\circlearrowleft_1 = x$ and $Y^\circlearrowleft_1 = y$. We notice that $H(X) = (\tilde H(X))^p$. Let $Z = X^\circlearrowleft\land Y^\circlearrowleft$ and decompose  the paths  writing $X^\circlearrowleft = Z \sqcup A$ and $Y^\circlearrowleft = Z \sqcup B$ as in the proof of \cref{prop:Tree-metric-p=1-case}  with $A([0,1]) \cap B[(0,1)] = \{x \land y\}$ and both $A$ and $B$ injective paths of finite $p$-variation. \\
    By $\|X|_{[s,t]}\|_{p\textnormal{-}\var} \leq \|X\|_{p\textnormal{-}\var}$ for any $s,t \in [0,1]$ with $s \leq t$, subadditivity of $p$-variation and loop erasure, we have
    \begin{equation*}
        \max\{\pv{A}{p}{},\pv{B}{p}{}\}\leq D^p([X]_\circlearrowleft,[Y]_{\circlearrowleft}) \leq \pv{A}{p}{} + \pv{B}{p}{}
    \end{equation*}
    and by superadditivity
    \begin{equation*}
        \begin{aligned}
            \pv{A}{p}^p &\leq \pv{X^\circlearrowleft}{p}^p - \pv{Z}{p}^p, \\
            \pv{B}{p}^p &\leq \pv{Y^\circlearrowleft}{p}^p - \pv{Z}{p}^p.
        \end{aligned}
    \end{equation*}
   Let $[Y_n]_\circlearrowleft$ be a sequence and set $Z_n:=X^\circlearrowleft\land Y_n^\circlearrowleft$, with decompositions $X^\circlearrowleft=Z_n\sqcup A_n$ and $Y_n^\circlearrowleft=Z_n\sqcup B_n$.

    We first assume that $D'([X]_\circlearrowleft,[Y_n]_\circlearrowleft)\to0$. Then, both non-negative summands $\pv{X^\circlearrowleft}{p}{}^p-\pv{Z_n}{p}{}^p$ and $\pv{Y_n^\circlearrowleft}{p}{}^p-\pv{Z_n}{p}{}^p$ converge to zero. Hence the estimates above imply $\pv{A_n}{p}{}\to0$ and $\pv{B_n}{p}{}\to0$. Therefore
    \begin{equation*}
        D^p([X]_\circlearrowleft,[Y_n]_\circlearrowleft)
        \leq
        \pv{A_n}{p}+\pv{B_n}{p}
        \to0.
    \end{equation*}

    Conversely, suppose that $D^p([X]_\circlearrowleft,[Y_n]_\circlearrowleft)\to0$. Then, $\pv{A_n}{p}{}\to0$ and $\pv{B_n}{p}{}\to0$. Since $X^\circlearrowleft=Z_n\sqcup A_n$, subadditivity and the decomposition of $X$ imply
    \begin{equation*}
        0
        \leq
        \pv{X^\circlearrowleft}{p}{}-\pv{Z_n}{p}{}
        \leq
        \pv{A_n}{p}{}
        \to0.
    \end{equation*}
    Similarly, $0\leq \pv{Y_n^\circlearrowleft}{p}{}-\pv{Z_n}{p}{}\leq \pv{B_n}{p}{}\to0$. Since $r\mapsto r^p$ is continuous on bounded intervals, it follows that $D'([X]_\circlearrowleft,[Y_n]_\circlearrowleft)\to0$. Thus $D^p$ and $D'$ induce the same topology.

    Finally, $\tilde D([X]_\circlearrowleft,[Y_n]_\circlearrowleft)$ is the sum of the two non-negative terms $\pv{X^\circlearrowleft}{p}{}-\pv{Z_n}{p}{}$ and $\pv{Y_n^\circlearrowleft}{p}{}-\pv{Z_n}{p}{}$, whereas $D'([X]_\circlearrowleft,[Y_n]_\circlearrowleft)$ is the same expression with each term replaced by the corresponding difference of $p$-th powers. Since the involved quantities are non-negative and uniformly bounded near $X^\circlearrowleft$, continuity and strict monotonicity of $r\mapsto r^p$ imply
    \begin{equation*}
        \tilde D([X]_\circlearrowleft,[Y_n]_\circlearrowleft)\to0
        \quad\Longleftrightarrow\quad
        D'([X]_\circlearrowleft,[Y_n]_\circlearrowleft)\to0.
    \end{equation*}
    Hence $D^p$, $D'$ and $\tilde D$ induce the same topology.
\end{proof}

\paragraph{\textbf{Continuity of lifts.}}  
Since \cref{def:LeDonne-unique-path-lifting-p-variation} provided a $p$-variation analogue of the unique path lifting in \cite[Definition 2.5]{LeDonneZuest_public}, also the continuous path lifting \cite[Definition 2.8]{LeDonneZuest_public} can be given a $p$-variation analogue as follows.
\begin{definition}
    Let $(E,d_E)$ and $(F,d_F)$ be pointed metric spaces. A map $\pi \colon E \to F$ has the \emph{continuous $p$-path lifting property} if it has the unique path $p$-lifting property and for each sequence of paths $X^{(n)} \colon [0,1] \to F$, $n \in \N$, with
    \begin{enumerate}[label = (\roman*)]
        \item $X^{(n)}_0 = \xi$ for all $n$ in  $\N$,
        \item $\sup_{n\geq 1} \pvd{X^{(n)}}{p}{d_F} < \infty$,
        \item $X^{(n)} \to X$ pointwise for all $t$ in $[0,1]$,
    \end{enumerate}
    it holds that
    \begin{equation*}
        Y^{(n)} \to Y \quad \textrm{ pointwise for all $t$ in} [0,1],
    \end{equation*}
    where $Y^{(n)}$ and $Y$ are lifts of $X^{(n)}$ and $X$ respectively, starting at $x$ with $\pi(x) = \xi$.
\end{definition}
It is then possible to also carry over \cite[Proposition 2.9]{LeDonneZuest_public} to the $p$-variation setting.
\begin{proposition} \label{prop:Geodesic-triangular-argument}
    Assume that for every $k $ in $\N$, the pointed metric space $(E_k, d_k)$ is $p$-variation geodesic and proper. If all the projections $\pi_{k+1} \colon E_{k+1} \to E_k$ are base point-preserving\footnote{A map between pointed spaces $\pi\colon (E,e) \to (F,f)$ is basepoint preserving if $\pi(e) = f$.} and have the continuous $p$-path lifting property, then the limit $(E_\infty,d_\infty)$ is a complete $p$-variation geodesic metric space and for each pair of points $x = (x_1,x_2,...),y = (y_1,y_2,...) \in E_\infty$ there exist $Y^{(\infty)} \colon [0,1] \to E_\infty$ and $X^{(\sigma(n))} \colon [0,1] \to E_{\sigma(n)}$ for a strictly increasing sequence $\sigma \colon \N \to \N$ such that:
    \begin{enumerate}[label = (\roman*)]
        \item $Y^{(\infty)}$ is a Hölderian geodesic path w.r.t.~$p$-variation parametrized to be $1/p$-Hölder connecting $x$ with $y$ in $E_\infty$.
        \item Each $X^{\sigma(n)}$ is a Hölderian geodesic path w.r.t.~$p$-variation parametrized to be $1/p$-Hölder connecting $x^{\sigma(n)}$ with $y^{\sigma(n)}$.
        \item $\pi^\infty_k \circ Y = \lim_{n \to \infty} \pi^{\sigma(n)}_k\circ X^{(\sigma(n))}$ for all $k $ in $\N$.
    \end{enumerate}
\end{proposition}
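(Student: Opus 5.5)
We follow the scheme of \cite[Proposition 2.9]{LeDonneZuest_public}, replacing length by $p$-variation throughout. We first record the properties of the limit: by \cite[Lemma 2.2]{LeDonneZuest_public} the inverse limit $(E_\infty,d_\infty)$ exists with $d_\infty(x,y)=\sup_k d_k(x_k,y_k)=\lim_k d_k(x_k,y_k)$. Completeness will follow from properness of each $E_k$. Given a $d_\infty$-Cauchy sequence, each coordinate sequence is $d_k$-Cauchy, so it converges to some $z_k$. Compatibility $\pi_{k+1}(z_{k+1})=z_k$ passes to the limit by continuity of the projections. Finally, $d_k(x^{(m)}_k,z_k)\le\liminf_n d_k(x^{(m)}_k,x^{(n)}_k)\le \liminf_n d_\infty(x^{(m)},x^{(n)})$ uniformly in $k$, which yields convergence in $d_\infty$ and $z\in E_\infty$.

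For the geodesic statement, fix $x,y\in E_\infty$ and set $L:=d_\infty(x,y)<\infty$. For each $n$, choose a Hölderian geodesic $X^{(n)}$ in $E_n$ from $x_n$ to $y_n$, parametrised $1/p$-Hölder with constant $d_n(x_n,y_n)\le L$ (\cref{rem:more-on-p-variation}). For fixed $k\le n$, the projected paths $\pi^n_k\circ X^{(n)}$ are $1/p$-Hölder with constant at most $L$, since the projections are $1$-Lipschitz. They start at $x_k$, and since $E_k$ is proper they take values in the compact ball $\bar B(x_k,L)$. By Arzel\`a--Ascoli and a diagonal argument, we obtain a strictly increasing $\sigma$ such that $\pi^{\sigma(n)}_k\circ X^{(\sigma(n))}\to Y^k$ uniformly for every $k$. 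The limits are compatible, $\pi_{k+1}\circ Y^{k+1}=Y^k$, so $Y^{(\infty)}_t:=(Y^k_t)_k$ defines a map into $\prod_k E_k$. Lower semicontinuity of $p$-variation under pointwise convergence (\cite[Lemma 5.12]{Friz_Victoir_2010_fullbook}) gives
\begin{equation*}
\pvdi{Y^k}{p}{d_k}{[s,t]}\le \liminf_n \pvdi{X^{(\sigma(n))}}{p}{d_{\sigma(n)}}{[s,t]}\le L|t-s|^{1/p}.
\end{equation*}
Hence $d_\infty(Y^{(\infty)}_s,Y^{(\infty)}_t)\le L|t-s|^{1/p}$, so $Y^{(\infty)}$ takes values in $E_\infty$ and is $1/p$-Hölder there. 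Moreover $\pvd{Y^{(\infty)}}{p}{d_\infty}=\sup_k\pvd{Y^k}{p}{d_k}\le L$, where this identity follows by taking suprema over partitions and over $k$. Together with the trivial lower bound $L=d_\infty(x,y)\le \pvd{Y^{(\infty)}}{p}{d_\infty}$, this shows $Y^{(\infty)}$ attains $d_\infty(x,y)=(d_\infty)_p(x,y)$. This gives (i), (ii) and (iii), and in particular that $E_\infty$ is a $p$-variation geodesic space.

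The main obstacle is where the continuous $p$-path lifting property is needed, and it concerns exchanging the supremum over partitions with the supremum over $k$, together with the admissibility of $Y^{(\infty)}$. The exchange is fine since both are suprema; the delicate issue is membership in the admissible class, namely that $Y^{(\infty)}$ is the unique lift of $Y^1$. For this we argue as follows. Each $\pi^{\sigma(n)}_1\circ X^{(\sigma(n))}$ has $p$-variation at most $L$, starts at $x_1$, and converges pointwise to $Y^1$. The continuous $p$-path lifting property for $\pi^\infty_1$, obtained as in \cref{rem:LeDonneZuest-proof-2.6-and-2.7}, then shows that the lifts converge to the lift of $Y^1$. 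By uniqueness of lifts (\cref{rem:LeDonneZuest-proof-2.6-and-2.7}(c)), each projection $\pi^{\sigma(n)}_k\circ X^{(\sigma(n))}$ is the level-$k$ lift of its first-level projection. Hence the limit coincides with $Y^k$ at every level $k$, which identifies $Y^{(\infty)}$ with the lift of $Y^1$ and guarantees it lies in the admissible class. The point to check carefully here is that the $p$-variation analogue of \cite[Lemma 2.7]{LeDonneZuest_public} indeed transfers continuity of lifting to the limit projections.
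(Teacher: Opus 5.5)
Your proof is correct, but it takes a more elementary route than the paper's.

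\textbf{The paper's route.} It follows Le Donne--Z\"ust literally. Each level-$k$ geodesic $X^{(k)}$ is first lifted to a path $Y^{(k)}$ in $E_\infty$ starting at $x$, using the unique $p$-path lifting property of the limit projections $\pi^\infty_k$ (the analogue of \cite[Lemma 2.7]{LeDonneZuest_public}). Arzel\`a--Ascoli is then run on $\pi^\infty_n\circ Y^{(k)}$ at every level $n$, including $n>k$, where endpoints only become $y_n$ in the limit. Finally \cref{rem:LeDonneZuest-proof-2.6-and-2.7} (uniqueness of lifts and preservation of $p$-variation on subintervals) shows that the limits $Z^{(n)}$ are successive lifts and that $Z^{(\infty)}$ inherits the H\"older bound $d_\infty(x,y)$.

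\textbf{Your route.} You never leave the finite levels. You project $X^{(n)}$ only down to levels $k\le n$, and the compatibility of the limits comes from continuity of $\pi_{k+1}$. The bound $d_\infty(Y^{(\infty)}_s,Y^{(\infty)}_t)\le L|t-s|^{1/p}$ then follows directly from $d_\infty=\sup_k d_k$. This single bound already gives membership in $E_\infty$, continuity, and $\pvd{Y^{(\infty)}}{p}{d_\infty}\le L$. So your identity $\pvd{Y^{(\infty)}}{p}{d_\infty}=\sup_k\pvd{Y^k}{p}{d_k}$ is not needed. It is true, but it rests on interchanging a finite sum with $\sup_k$, which is justified by monotonicity of $k\mapsto d_k(Y^k_s,Y^k_t)$, not by both being suprema. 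You should also state the endpoint explicitly: $Y^{(\infty)}_1=y$ because $\pi^{\sigma(n)}_k(y_{\sigma(n)})=y_k$.

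\textbf{What each approach buys.} Your route shows that completeness and (i)--(iii) only use properness, $p$-geodesicity and $1$-Lipschitz compatible projections. The paper's route additionally exhibits the limit geodesic as a lift, in keeping with the lifting framework used later.

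\textbf{Your last paragraph.} It is misdirected. The proposition imposes no admissible class beyond all paths, so no obstacle sits there. If you do want $Y^{(\infty)}$ to be the lift of $Y^1$, it follows from uniqueness of lifts alone: apply \cref{rem:LeDonneZuest-proof-2.6-and-2.7} to $\pi^\infty_1$, using that $Y^{(\infty)}$ has finite $p$-variation in $d_\infty$. This needs no continuity of lifting, and hence not the unchecked transfer to the limit projections that you flag.
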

For completeness we provide a sketch of the proof adapting to the $p$-variation case the proof of \cite[Proposition 2.9]{LeDonneZuest_public} in the rectifiable case.

\begin{proof}[Proof]
    Completeness of $(E_\infty,d_\infty)$ follows by properness of each $(E_k,d_k)$ and \cite[Lemma 2.2]{LeDonneZuest_public}. Similarly to the length case, let $X^{(k)} \colon [0,1] \to E_k$ be a Hölderian geodesic w.r.t.~$p$-variation connecting $x_k$ to $y_k$. Since $X^{(k)}$ has finite $p$-variation we can choose the minimal control $\omega_X$ so that $[X^{(k)}]_{1/p} \leq d_k(x_k,y_k)$ for all $k \in \N$, where $[X_k]_{1/p} = \pv{X}{p}{}$ is the Hölder constant.\footnote{Note that $[X]_1 = \operatorname{Lip(X)}$ as in \cite{LeDonneZuest_public}.} 
    Since the unique $p$-path lifting holds for the limiting projections and $1$-Lipschitz maps, the inequality
    \begin{equation*}
        [Y^{(k)}]_{1/p} \leq [X^{(k)}]_{1/p} \leq d_k(x_k,y_k) \leq d_\infty(x,y)
    \end{equation*}
    also holds for the lift $Y^{(n)}\colon [0,1] \to E_\infty$.
    The path $Y^{(k)}$ is therefore contained in the closed ball $\bar{B}(x,d_\infty(x,y))$. By the $1$-Lipschitz property of each limiting projection $\pi^\infty_k \colon (E_\infty,d_\infty) \to (E_k,d_k)$, $\pi^\infty_n \circ Y^{(k)}_t$ lies in $\bar{B}(x_n,d_\infty(x,y))$, starts at $x_n$ and also satisfies $[\pi^\infty_n \circ Y^{(k)}]_{1/p} \leq d_\infty(x,y)$ for all $n \in \N$. Since $\bar{B}(x_n,d_\infty(x,y)) \subseteq E_n$ is compact by assumption, the theorem of Arzelà-Ascoli \cite[Theorem 1.4]{Friz_Victoir_2010_fullbook} guarantees the existence of a subsequence $(\sigma(n,k))_{k \in \N}$ with $\sigma(1,k) = k$ and $\sigma(n,1) = n$, such that we can conclude recursively that $(\pi^\infty_{n} \circ Y^{(m(n,k))})_{n \in \N}$ converges uniformly to some path $Z^{(k)}$. For the diagonal sequence, we have
    \begin{equation*}
        \lim_{k \to \infty} \pi^\infty_n \circ Y^{\sigma(k,k)} = Z^{(n)}
    \end{equation*}
    uniformly for all $n$ in $ \N$.
    As in \cite[Proposition 2.9]{LeDonneZuest_public}, the following properties of these limits hold for all $n$ in $\N$:
    \begin{enumerate}[label = (\arabic*)]
        \item $Z^{(n)}_0 = x_n$, \label{prop:Geodesic-triangular-argument-property-limit-startpoint}
        \item $Z^{(n)}_1 = y_n$, \label{prop:Geodesic-triangular-argument-property-limit-endpoint}
        \item $[Z^{(n)}]_{1/p} \leq d_\infty(x,y)$, \label{prop:Geodesic-triangular-argument-property-limit-Hoelder}
        \item $\pi^{n+1}_n(Z^{(n+1)}) = Z^{(n)}$. \label{prop:Geodesic-triangular-argument-property-limit-projection}
    \end{enumerate}
Properties \ref{prop:Geodesic-triangular-argument-property-limit-startpoint} and \ref{prop:Geodesic-triangular-argument-property-limit-endpoint} are clear by construction, ref. \cite[Proposition 2.9]{LeDonneZuest_public}.
Property \ref{prop:Geodesic-triangular-argument-property-limit-Hoelder} follows from $[\pi_n^\infty \circ Y^{(k)}]_{1/p} \leq d(x,y)$ for all $n,k $ in $\N$. 
Property \ref{prop:Geodesic-triangular-argument-property-limit-projection} follows from the unique $p$-path lifting property with \cref{rem:LeDonneZuest-proof-2.6-and-2.7} with fixed starting point. The unique path lifting assumption then implies that $Z^{(n+1)}$ is the lift of $Z^{(n)}$. \\
    From \ref{prop:Geodesic-triangular-argument-property-limit-endpoint}, \ref{prop:Geodesic-triangular-argument-property-limit-Hoelder} and \ref{prop:Geodesic-triangular-argument-property-limit-projection} and \cref{rem:LeDonneZuest-proof-2.6-and-2.7} it follows that the limit path $Z^{(\infty)} = (Z^{(1)},Z^{(2)},...)$ over $E_\infty$ for each $n \in \N$ is the lift of $Z^{(n)}$ starting at $x$. Moreover, $[Z^{(\infty)}]_{1/p} \leq d_\infty(x,y)$ by \ref{prop:Geodesic-triangular-argument-property-limit-Hoelder} and \cref{rem:LeDonneZuest-proof-2.6-and-2.7}. Furthermore,
    \begin{equation*}
        d_\infty(x,y) \leq \pvd{Z^{(\infty)}}{p}{d_\infty} \leq [Z^{(\infty)}]_{1/p} \leq d_\infty(x,y).
    \end{equation*}
    Together with \ref{prop:Geodesic-triangular-argument-property-limit-endpoint} we have $Z^{(\infty)}_1 = y$ and hence $Z$ is a Hölderian geodesic w.r.t.~$p$-variation connecting $x$ and $y$. The bound $[Z^{(\infty)}]_{1/p} \leq d_\infty(x,y)$ implies also that $Z^{(\infty)}$ is also parametrised to be $1/p$-Hölder.
\end{proof}
\begin{remark}
    Note that the above statement shows that \cite[Proposition 2.9]{LeDonneZuest_public} only depends on the admission of geodesics between endpoints and hence could be stated in even greater generality under the minimisation of general cost functionals as in optimal control.
\end{remark}

\paragraph{\textbf{Topologies on the signature group.}} To conclude this section, we briefly discuss further topologies on the reduced path group.
One straightforward choice is the product topology on $SG^p$ pulled back via $S^{-1}$. Since the signature group is closed w.r.~to its inherited multiplication, both the signature group and reduced path group are topological groups in this case. Another candidate was given by $\delta^p$ in \eqref{eq:Tree-metric-on-reduced-path-group} for the reduced path group and $D^p$ for the signature group, which induced the tree topology. In this sense, there are many topologies one might consider, some of which have already been investigated in detail in \cite{cass2024topologiesunparameterisedroughpath}, where it was shown in \cite[Remark 3.7]{cass2024topologiesunparameterisedroughpath} that the topology induced by $\delta^p$ makes the resulting topological space non-separable. 
This is not a favourable situation for probabilists when investigating measures on these infinite dimensional spaces since many results in probability require the underlying metric space to be Polish. Another metric that one might equip path space with is \cite{cass2024topologiesunparameterisedroughpath}
\begin{equation*}
    \delta^p_\star([X]_\tau,[Y]_\tau) = \max_{j = 1,...,{\floor{p}}} \left(\sup_{\mathcal D} \sum_{t_i \in \mathcal{D}} \norm{\proj_j(X^\tau_{t_{i},t_{i+1}}) - \proj_j(Y^\tau_{t_i,t_{i+1}})}_j^{p/j} \right)^{j/p}.
\end{equation*}
Note that it was proven in \cite[Proposition 4.10]{cass2024topologiesunparameterisedroughpath} that $(\mathcal{C}^1,\delta^1_\star)$ is Polish. We suspect that this will fail for $p > 1$ due to the fact that Hölder spaces in general are not separable.\\
In addition, given the above discussion and the next section, the initial diagram \eqref{eq:Diagram-p-variation-LZ} of \cref{thm:Main-Theorem-Commutative-Diagram} determines a canonical metric on the corresponding path space associated to $\cG_p$ together with the fixed choice $d \in \metlift(\cG_p)$ by the group isomorphism between the signature group and the inverse limit.

\section{The canonical metric relations}

\label{sec:Canonical-metric-relations-goals}

In this section we establish the left hand side of the diagram \eqref{eq:Diagram-p-variation-LZ}, give a characterisation of the homogeneous metrics in $\metlift(\cG_p)$ and conclude the approximation corollaries. \\

\paragraph{\textbf{Intrinsic $p$-variation.}} The $p = 1$ case considered in \cite{LeDonneZuest_public} together with \cref{prop:Tree-metric-p=1-case} and the considerations of the intrinsic $p$-variation w.r.t.~$(G_{\rho^p\textnormal{-}p.r.c.}(V),d^p)$ with \cite[Remark 4.1]{BOEDIHARDJO_LYONS_2016720} motivate the introduction of the analogue of the Carnot-Carathéodory norm for the weakly geometric rough paths on each level, which we define in the abstract sense via the corresponding control
\begin{equation} \label{eq:CC-p-variation-norm}
    \cost_{k}^p(g) = \inf_{X \in \mathcal{A}_k(g)} \pvd{X}{p}{d_{\floor{p}}}^p, \quad \mathcal{A}_k(g) = \{ X \in WG\Omega^p_{d_{\floor{p}}} \mid \, S_k(X) = g \}
\end{equation}
where $S_k$ is understood via the unique $p$-path lifting defined in \cref{sec:Rough-Path-Theory} w.r.t.~a family of metric $d \in \metlift(\cG_p)$. In the case of $d = \rho^p$ or a homogeneous family of metrics the truncated signature $S_k$ is the Lyons lift. \\
Notice that $\mathcal{A}_k$ is well-defined and reduces to $\mathcal{A}^{\CC}_k$ for $p=1$, which is the Carnot-Carathéodory setting. Hence, we call by analogy $\mathcal{A}_k(g)$ again the set of \emph{admissible (horizontal) paths}. Set $\mcost^p_k(g,h) := \cost^p_k(g^{-1} h)$. Note that each $\mcost^{p}_k$ is not necessarily homogeneous\footnote{If we require all $d_k$'s to be homogeneous, it follows immediately that $\mcost^p_k(\delta_\lambda(g),\delta_{\lambda}(h)) = \lambda^p \mcost^p_k(g,h)$ for $\lambda \geq 0$.}, let alone a metric \textit{a priori} by \eqref{eq:p-Variation-inequalities-super}. \\
Since \cite[Section 4]{LeDonneZuest_public} uses knowledge of \cite[Chapter 7]{Friz_Victoir_2010_fullbook}, for completeness, we find it necessary to provide proofs of several propositions, in particular, \cite[Proposition 7.59]{Friz_Victoir_2010_fullbook} for any $p \geq 1$.
\begin{lemma} \label{lem:properties-of-dpCC}
    Let $\mcost^p_k$ be defined as above.
    For $d $ in $\metlift(\cG_p)$, let $d^{p\textnormal{-}\CC}_k(\cdot,\cdot) := (\mcost_k^p(\cdot,\cdot))^{1/p}$. Then
    \begin{enumerate}[label = (\alph*)]
        \item $\mcost_k^p$ is symmetric and $\left(\mcost_k^p(1,g) = 0\right) \Longleftrightarrow \left(g = 1_k\right)$ for every $g$ in $G^k(V)$,
        \item $d^{p\textnormal{-}\CC}_k$ satisfies the triangle inequality and therefore is an extended\footnote{Recall that this refers to a metric $d \colon E\times E \to [0,\infty) \cup \{\infty\}$.} metric,
        \item if each $d_k$ is homogeneous, then $d^{p\textnormal{-}\CC}_k$ is homogeneous.
    \end{enumerate}
\end{lemma}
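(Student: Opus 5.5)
The plan follows the strategy of \cite[Proposition 7.59]{Friz_Victoir_2010_fullbook}, replacing length by $p$-variation and using only the algebraic properties of the lifts from \cref{lem:Homomorphism-of-lifts} and \cref{lem:Homomorphism-And-Image-of-endpoint-map}. Left-invariance of $\mcost^p_k$ holds by definition, so it suffices to study $\cost^p_k(g)=\mcost^p_k(1_k,g)$.

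For (a), symmetry comes from path reversal. If $X\in\mathcal{A}_k(g)$, then $S_k(\rev X)=S_k(X)^{-1}=g^{-1}$, so $\rev X\in\mathcal{A}_k(g^{-1})$. Moreover $\pvd{\rev X}{p}{d_{\floor p}}=\pvd{X}{p}{d_{\floor p}}$ by \cref{rem:more-on-p-variation}. Taking infima gives $\cost^p_k(g^{-1})=\cost^p_k(g)$, and hence $\mcost^p_k(g,h)=\mcost^p_k(h,g)$. For definiteness, the constant path gives $\cost^p_k(1_k)=0$. Conversely, suppose $\cost^p_k(g)=0$ and take $X\in\mathcal{A}_k(g)$. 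The lift $Y=\cL_k(X)$ satisfies $\pvd{Y}{p}{d_k}=\pvd{X}{p}{d_{\floor p}}$ by \ref{def:LeDonne-unique-path-lifting-p-variation-isometry} of \cref{def:LeDonne-unique-path-lifting-p-variation}, and the trivial partition gives
\begin{equation*}
d_k(1_k,g)=d_k(Y_0,Y_1)\leq \pvd{Y}{p}{d_k}=\pvd{X}{p}{d_{\floor p}}.
\end{equation*}
Taking the infimum yields $d_k(1_k,g)\le \cost^p_k(g)^{1/p}$. So $\cost^p_k(g)=0$ forces $g=1_k$, because $d_k$ is a metric. The same inequality, $d_k\le d^{p\textnormal{-}\CC}_k$, is worth recording for later use.

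For (b), fix $g,h$ with finite cost (otherwise there is nothing to prove) and $\epsilon>0$. Pick $X\in\mathcal{A}_k(g)$ and $Y\in\mathcal{A}_k(h)$ that are $\epsilon$-optimal. Chen's relation \eqref{eq:Chens-relation-as-a-homomorphism} gives $S_k(X\sqcup Y)=gh$, so $X\sqcup Y\in\mathcal{A}_k(gh)$. The sub-additivity in \cref{rem:more-on-p-variation} then gives
\begin{equation*}
d^{p\textnormal{-}\CC}_k(1_k,gh)\le \pvd{X\sqcup Y}{p}{d_{\floor p}}\le \pvd{X}{p}{d_{\floor p}}+\pvd{Y}{p}{d_{\floor p}}\le d^{p\textnormal{-}\CC}_k(1_k,g)+d^{p\textnormal{-}\CC}_k(1_k,h)+2\epsilon.
\end{equation*}
Left-invariance converts this into the triangle inequality for $d^{p\textnormal{-}\CC}_k$. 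The only subtlety is that the control $\cost^p_k$ itself is merely super-additive. This is exactly why one takes the $1/p$-th power, as discussed after \cref{def:p-geodesic}. Finiteness is not claimed, so $d^{p\textnormal{-}\CC}_k$ is an extended metric.

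For (c), the key step, and the main obstacle, is to show that dilation commutes with the lift. Concretely, for $X\in WG\Omega^p_{d_{\floor p}}$ one needs $\cL_k(\delta_\lambda X)=\delta_\lambda\cL_k(X)$. The plan is as follows. The dilation $\delta_\lambda$ is a group automorphism commuting with the projections $\pi^k_j$. By homogeneity of $d_k$, the path $\delta_\lambda\cL_k(X)$ has finite $p$-variation, namely $\lambda\pvd{X}{p}{d_{\floor p}}$. It starts at $1_k$ and projects to $\delta_\lambda X$. Uniqueness of lifts, item \ref{def:LeDonne-unique-path-lifting-p-variation-uniqueness}, then identifies it with $\cL_k(\delta_\lambda X)$. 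Consequently $S_k(\delta_\lambda X)=\delta_\lambda S_k(X)$, so $\delta_\lambda$ maps $\mathcal{A}_k(g)$ bijectively onto $\mathcal{A}_k(\delta_\lambda g)$ for $\lambda>0$, with inverse $\delta_{1/\lambda}$. Since $\pvd{\delta_\lambda X}{p}{d_{\floor p}}=\lambda\pvd{X}{p}{d_{\floor p}}$, we obtain $d^{p\textnormal{-}\CC}_k(1_k,\delta_\lambda g)=\lambda\, d^{p\textnormal{-}\CC}_k(1_k,g)$. Because $\delta_\lambda$ is a homomorphism, this extends to $d^{p\textnormal{-}\CC}_k(\delta_\lambda g,\delta_\lambda h)=\lambda\, d^{p\textnormal{-}\CC}_k(g,h)$. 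The case $\lambda=0$ is trivial.
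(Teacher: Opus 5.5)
Your proposal is correct and follows the paper's proof essentially step by step. You use reversal for symmetry, the lift's isometry property with the trivial partition for definiteness, near-optimal concatenation plus sub-additivity for the triangle inequality, and dilation for homogeneity. In the triangle inequality you form $X\sqcup Y$ to reach $gh$, whereas the paper forms $\rev{X}\sqcup Y$ to reach $g^{-1}h$; this is equivalent by left-invariance. In (c) you add one thing the paper omits: you justify $S_k\circ\delta_\lambda=\delta_\lambda\circ S_k$ via uniqueness of lifts and homogeneity of $d_k$, whereas the paper simply invokes this identity.
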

For completeness let us add the elementary proof which is analogous to the one for the $\CC$-metric.
\begin{proof}
    \begin{enumerate}
        \item By definition of $p$-variation, we get that $\mcost_k^p$ has range $[0,\infty]$ and is therefore well-defined. Since for any path $X$ and $\rev{X}$ have the same $p$-variation and reversal of the path is the group inversion of the end point, we get that $\cost_k^p(g) = \cost^p_k(g^{-1})$ for all $g \in G^k(V)$, and hence $\mcost$ is symmetric. Since $o\colon t \mapsto 1_{\floor{p}} \in \mathcal{A}_k(1_k)$, we get
        \begin{equation*}
            0 \leq \cost^p_k(1_k) \leq \pvd{o}{p}{d_{{\floor{p}}}} = 0.
        \end{equation*}
        For the converse, assume $\cost^p_k(g) = 0$, then for any $X \in \mathcal{A}_k(g)$ we have
        \begin{equation*}
            d_k(1_k,g) \leq \pvd{\cL_k(X)}{p}{d_k} = \pvd{X}{p}{d_{{\floor{p}}}}.
        \end{equation*}
        and hence by using the infimum on the right hand side, we get $d_k(1_k,g)^p \leq \cost_k^p(g) = 0$. Since $d_k$ is a metric and $p \geq 1$, it follows that $g = 1_k$.

        \item Let $(\varepsilon_n)_{n \in \N}$ be a sequence of positive reals with $\varepsilon_n \downarrow 0$, $X^{(n)}$ a sequence in $\mathcal A_k(g)$ and $Y^{(n)}$ a sequence in $\mathcal A_k(h)$ 
        such that
        \begin{equation*}
            \pvd{X^{(n)}}{p}{d_{\floor{p}}} \leq d_k^{p\textnormal{-}\CC}(1_k,g) + \varepsilon_n, \quad \pvd{Y^{(n)}}{p}{d_{\floor{p}}} \leq d_k^{p\textnormal{-}\CC}(1_k,h) + \varepsilon_n, \quad 
            \forall n\in \N.
        \end{equation*}
        Since $\rev{X^{(n)}} \sqcup Y^{(n)}$ 
        lies in $\mathcal{A}_k(g^{-1}h)$ for any $n$ in $\N$, by \eqref{eq:p-Variation-inequalities-sub} 
       it follows that
        \begin{equation*}
            \begin{aligned}
                d^{p\textnormal{-}\CC}_k(g,h) &\leq \pvd{\rev{X^{(n)}}\sqcup Y^{(n)}}{p}{d_{{\floor{p}}}} \leq \pvd{X^{(n)}}{p}{d_{{\floor{p}}}} + \pvd{Y^{(n)}}{p}{d_{{\floor{p}}}} \\
                &\leq d^{p\textnormal{-}\CC}_k(1_k,g) + d^{p\textnormal{-}\CC}_k(1_k,h) + 2 \varepsilon_n.
            \end{aligned}
        \end{equation*}
        Letting $n \to \infty$ proves the triangle inequality.

        \item This is a consequence of the homogeneity of $d_{\floor{p}}$ combined with the identity $S_k \circ \delta_\lambda = \delta_\lambda \circ S_k$ and 
        \begin{equation*}
            \pvd{\delta_\lambda(X)}{p}{d_{\floor{p}}} = |\lambda| \pvd{X}{p}{d_{\floor{p}}}
        \end{equation*}
    \end{enumerate}
\end{proof}
\begin{remark} \label{rem:Category-change}
        Let $X$ be a path of finite $p$-$d_k$-variation into $(G^k(V),d_k)$,
        by construction
        \begin{equation*}
            \pvd{X}{p}{d^{p\textnormal{-}\CC}_k}^p = \pvd{X}{1}{\mcost^p_k}
        \end{equation*}
        and the above holds for any restriction of $X$ to a subinterval $[s,t] \subseteq [0,1]$, as well.
\end{remark}

\paragraph{\textbf{Unique path lifting.}} Just as in the case $p=1$, using the lifting of paths  given by the Lyons extension, we can establish the lifting property w.r.t.~each $d^{p\textnormal{-}\CC}_k$ in the case of general path lifts.
\begin{lemma} \label{lem:p-variation=1-variation-p-CC}
   Given $X$ in $WG\Omega^p_d$, the following identity holds
   \begin{equation*}
        \pvd{X}{p}{d_{\floor{p}}} = \pvd{X}{p}{d^{p\textnormal{-}\CC}_{\floor{p}}}.
   \end{equation*}
\end{lemma}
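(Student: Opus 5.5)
We prove the two inequalities separately, working at the level $k=\floor{p}$. There $S_{\floor p}$ is just terminal-time evaluation, because $\cL_{\floor p}$ is the identity. Hence $\mathcal{A}_{\floor p}(g)$ consists of all paths in $WG\Omega^p_{d_{\floor p}}$ from $1_{\floor p}$ to $g$. By left-invariance, for any $s\le t$ we have
\begin{equation*}
    d^{p\textnormal{-}\CC}_{\floor p}(X_s,X_t)^p=\cost^p_{\floor p}(X_s^{-1}X_t)=\inf\bigl\{\pvd{Z}{p}{d_{\floor p}}^p : Z\in WG\Omega^p_{d_{\floor p}},\ Z_1=X_s^{-1}X_t\bigr\}.
\end{equation*}

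\emph{Lower bound $d_{\floor p}\le d^{p\textnormal{-}\CC}_{\floor p}$.} By part (a) of the proof of \cref{lem:properties-of-dpCC}, with $k=\floor p$, every $Z$ admissible for $h$ satisfies $d_{\floor p}(1,h)\le \pvd{Z}{p}{d_{\floor p}}$. Taking the infimum gives $d_{\floor p}(g,h)\le d^{p\textnormal{-}\CC}_{\floor p}(g,h)$ pointwise. Inserting this into the partition sums defining $p$-variation yields $\pvd{X}{p}{d_{\floor p}}\le \pvd{X}{p}{d^{p\textnormal{-}\CC}_{\floor p}}$.

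\emph{Upper bound.} Fix a partition $\cD=\{t_i\}$ of $[0,1]$. For each $i$ the increment path $Z^{(i)}_u:=X_{t_{i-1}}^{-1}X_{t_{i-1}+u(t_i-t_{i-1})}$ starts at $1$ and ends at $X_{t_{i-1},t_i}$. It is a left translate of a reparametrised restriction of $X$, so by left-invariance of $d_{\floor p}$ and reparametrisation invariance of $p$-variation,
\begin{equation*}
    \pvd{Z^{(i)}}{p}{d_{\floor p}}=\pvdi{X}{p}{d_{\floor p}}{[t_{i-1},t_i]}.
\end{equation*}
Hence $d^{p\textnormal{-}\CC}_{\floor p}(X_{t_{i-1}},X_{t_i})^p\le \pvdi{X}{p}{d_{\floor p}}{[t_{i-1},t_i]}^p$. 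Summing over $i$ and using the super-additivity \eqref{eq:p-Variation-inequalities-super} of the control $\control_X$ bounds the sum by $\pvd{X}{p}{d_{\floor p}}^p$. Taking the supremum over $\cD$ gives the reverse inequality.

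The argument is largely routine. The one step needing care is the identification $S_{\floor p}=\eva_1$, i.e.\ that the convention $\cL_{\floor p}=\id$ holds in \cref{eq:Set-of-left-invariant-metrics-with-Lifts}. This guarantees that the restricted, translated increment paths are genuinely admissible in $\mathcal{A}_{\floor p}$. We also need $\cost^p_{\floor p}$ to be finite along $X$, which the upper bound supplies. One should further note that the first estimate uses only the $1$-Lipschitz relation $d\le$ $p$-variation from \eqref{eq:Increment-Estimate-Control}, and not any geodesic property of $d_{\floor p}$.
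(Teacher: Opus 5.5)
Your proposal is correct and follows the paper's proof: the pointwise bound $d_{\floor p}\le d^{p\textnormal{-}\CC}_{\floor p}$ (from the trivial partition of an admissible path) gives one inequality. The admissibility of the restricted increments $X|_{[t_{i-1},t_i]}$, combined with super-additivity of the control, gives the other. You also make explicit two points the paper leaves implicit: that $S_{\floor p}=\eva_1$, and the left-translation and reparametrisation step that makes these restricted increments genuinely admissible.
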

\begin{proof}
With the notations of the lemma, by definition we have the identity
    \begin{equation*}
       \mcost^p_{\floor{p}}(X_s,X_t) \leq \pvdi{X}{p}{d_{{\floor{p}}}}{[s,t]}^p\quad  \forall (s,t) \in \Delta
    \end{equation*}
    with $\Delta$ the standard simplex over the unit interval.
    Hence, for any partition $\mathcal{D}$ of the interval $[0,1]$, we get
    \begin{equation*}
        \sum_{t_i \in \mathcal{D}} d^{p\textnormal{-}\CC}_{\floor{p}}(X_{t_{i-1}},X_{t_i})^p \leq \sum_{t_i \in \mathcal{D}} \pvdi{X}{p}{d_{{\floor{p}}}}{[t_{i-1},t_i]}^p.
    \end{equation*}
    Taking the supremum over the partitions on both sides then shows that $$\pvd{X}{p}{d^{p\textnormal{-}\CC}_{\floor{p}}}\leq \pvd{X}{p}{d_{\floor{p}}}.$$ To show the other direction, for $s,t$ in $\Delta$ we take the infimum over the admissible paths starting at $t$ and ending at $s$, which yields
    \begin{equation*}
        d_{\floor{p}}(X_s,X_t)^p \leq \mcost^p_{\floor{p}}(X_s,X_t).
    \end{equation*}
 Taking the supremum over partitions on both sides yields
 the desired result.
\end{proof}
Given any $g \in G^k(V)$ for a $k \geq {\floor{p}}$, notice that the sets $(\mathcal A_j(\pi^k_j(g)))_{j = {\floor{p}}}^k$ are nested, 
\begin{equation*}
    \mathcal A_k(g) \subseteq \mathcal A_{k-1}(\pi^k_{k-1}(g)) \subseteq \cdots \subseteq \mathcal A_{\lfloor p\rfloor}(\pi^k_{\lfloor p\rfloor}(g)).
\end{equation*}
Hence, the sequence $d^{p\textnormal{-}\CC} = (d^{p\textnormal{-}\CC}_k)_{k \in \N_p}$ makes the canonical projections $1$-Lipschitz.
\begin{proposition} \label{prop:Isometry-Property-Lift}
    Given $X$ in $WG\Omega^p_{d_{\lfloor p\rfloor}}$, we have
    \begin{equation*}
        \pvd{\cL_k(X)}{p}{d_k^{p\textnormal{-}\CC}} = \pvd{X}{p}{d^{p\textnormal{-}\CC}_{\floor{p}}}.
    \end{equation*}
\end{proposition}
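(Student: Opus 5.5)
The plan is to prove the two inequalities separately, working with controls. By \cref{lem:p-variation=1-variation-p-CC} the right-hand side equals $\pvd{X}{p}{d_{\floor{p}}}$, so it suffices to show
\begin{equation*}
    \pvd{\cL_k(X)}{p}{d_k^{p\textnormal{-}\CC}} = \pvd{X}{p}{d_{\floor{p}}}.
\end{equation*}

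For the lower bound, the nesting of the admissible sets shows that the sequence $d^{p\textnormal{-}\CC}$ makes $\pi^k_{\floor{p}}$ $1$-Lipschitz. Since $\pi^k_{\floor{p}}\circ\cL_k(X) = X$, estimate \eqref{eq:p-variation-1-Lipschitz-inequality} (applied to the metrics $d^{p\textnormal{-}\CC}$) gives
\begin{equation*}
    \pvd{X}{p}{d^{p\textnormal{-}\CC}_{\floor{p}}} \leq \pvd{\cL_k(X)}{p}{d_k^{p\textnormal{-}\CC}}.
\end{equation*}
This is the easy direction.

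For the upper bound, fix $0\le s<t\le 1$ and consider the restricted path $X|_{[s,t]}$, reparametrised to $[0,1]$ and left-translated by $X_s^{-1}$ so that it starts at $1_{\floor{p}}$. The tool here is Chen's relation \eqref{eq:Chens-relation-as-a-homomorphism} together with the homomorphism property of the lift (\cref{lem:Homomorphism-of-lifts}), the invariance of $S_k$ under ordered reparametrisations (\cref{lem:Homomorphism-And-Image-of-endpoint-map}), and the left-invariance of $d_{\floor{p}}$. These give
\begin{equation*}
    S_k\bigl(X_s^{-1}X|_{[s,t]}\bigr) = \cL_k(X)_s^{-1}\cL_k(X)_t .
\end{equation*}
The translated, restricted path is therefore an element of $\mathcal{A}_k\bigl(\cL_k(X)_s^{-1}\cL_k(X)_t\bigr)$, and its $p$-$d_{\floor{p}}$-variation equals $\pvdi{X}{p}{d_{\floor{p}}}{[s,t]}$. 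By the definition \eqref{eq:CC-p-variation-norm},
\begin{equation*}
    d_k^{p\textnormal{-}\CC}\bigl(\cL_k(X)_s,\cL_k(X)_t\bigr)^p = \mcost^p_k\bigl(\cL_k(X)_s,\cL_k(X)_t\bigr) \leq \pvdi{X}{p}{d_{\floor{p}}}{[s,t]}^p = \control_X(s,t).
\end{equation*}
Because $\control_X$ is a control (\cref{rem:more-on-p-variation}), it is super-additive. Summing over any partition $\mathcal D$ and taking the supremum therefore yields
\begin{equation*}
    \pvd{\cL_k(X)}{p}{d_k^{p\textnormal{-}\CC}}^p \leq \control_X(0,1) = \pvd{X}{p}{d_{\floor{p}}}^p .
\end{equation*}
Combining this with \cref{lem:p-variation=1-variation-p-CC} closes the chain of (in)equalities.

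The main obstacle is the bookkeeping in the upper bound. One must check carefully that restricting, reparametrising and left-translating $X$ commutes with the lift $\cL_k$: the lift of the translated restriction should be the translated restriction of the lift. This follows from uniqueness in \cref{def:LeDonne-unique-path-lifting-p-variation} together with \cref{rem:LeDonneZuest-proof-2.6-and-2.7}(c), since $\pi^k_{\floor{p}}$ is a homomorphism and the $d_k$ are left-invariant. It is also worth noting that $d_k^{p\textnormal{-}\CC}$ is only an extended metric a priori (\cref{lem:properties-of-dpCC}). The argument above shows that all increments of $\cL_k(X)$ are nevertheless finite, so no issue with infinite values arises.
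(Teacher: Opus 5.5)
Your proposal is correct and follows the paper's proof. The lower bound comes from the $1$-Lipschitz property of $\pi^k_{\floor{p}}$ for the $d^{p\textnormal{-}\CC}$ metrics (nested admissible sets); the upper bound comes from the increment estimate $d^{p\textnormal{-}\CC}_k(\cL_k(X)_s,\cL_k(X)_t)^p \leq \pvdi{X}{p}{d_{\floor{p}}}{[s,t]}^p$ summed over partitions using super-additivity; and \cref{lem:p-variation=1-variation-p-CC} closes the chain. Your derivation of that increment estimate, via uniqueness of the lift of the left-translated restriction, supplies exactly the step the paper states without detail.
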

\begin{proof}
The proof follows the same arguments as the proof of \cref{lem:p-variation=1-variation-p-CC}.
Given $X$ in $WG\Omega^p_d$, from the nested property of the sets of admissible paths, it follows that
    \begin{equation*}
        \pvd{\cL_k(X)}{p}{d_k^{p\textnormal{-}\CC}} \geq \pvd{\pi^k_{\floor{p}} \circ \cL_k(X)}{p}{d^{p\textnormal{-}\CC}_{\floor{p}}} = \pvd{X}{p}{d^{p\textnormal{-}\CC}_{\floor{p}}}.
    \end{equation*}
    Using \cref{lem:p-variation=1-variation-p-CC}, we moreover have that
    \begin{equation*}
        \pvd{\cL_k(X)}{p}{d_k^{p\textnormal{-}\CC}}^p = \pvd{X}{p}{{d^{p\textnormal{-}\CC}_{\floor{p}}}}^p.
    \end{equation*}
    Conversely, for any $s,t$ in $\Delta$ it holds that
    \begin{equation*}
        d^{p\textnormal{-}\CC}_k(\cL_k(X)_s,\cL_k(X)_t)^p \leq \pvdi{X}{p}{d_{\floor{p}}}{[s,t]}^p.
    \end{equation*}
    Taking again the supremum over partitions on both sides ends the proof.
\end{proof}
\begin{remark} \label{rem:equality-of-p-variations}
    The two propositions above recover \ref{def:LeDonne-unique-path-lifting-p-variation-isometry} of \cref{def:LeDonne-unique-path-lifting-p-variation}. 
    Indeed, by combining the previous results we get back that
    \begin{align*}
            \pvd{\cL_k(X)}{p}{d^{p\textnormal{-}\CC}_k} = \pvd{X}{p}{d^{p\textnormal{-}\CC}_{\floor{p}}} = \pvd{X}{p}{d_{\floor{p}}} = \pvd{\cL_k(X)}{p}{d_k}.
    \end{align*}
\end{remark}
We now return to the characterisation question following \cref{eq:Set-of-left-invariant-metrics-with-Lifts}.

\begin{proposition}[Metric characterisation] \label{prop:homogeneous-metric-characterisaton}

By equivalence of homogeneous metrics, so that $\cL_k$ and $S_k$ are understood via Lyons' extension theorem independently of the choice of the family of homogeneous metrics $d$ in $\met(\cG_p)$, we have the characterisation
    \begin{equation*}
        d = (d_k)_{k \in \N_p} \in \metlift(\cG_p) \Longleftrightarrow d_k \leq d_k^{p\textnormal{-}\CC}, \quad \forall k \in \N_p.
    \end{equation*}
\end{proposition}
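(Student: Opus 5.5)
Both directions will be proved with $\cL_k$ and $S_k$ fixed as the Lyons extension. By the equivalence of homogeneous metrics at each level, this extension is the same for every homogeneous $d\in\met(\cG_p)$. Also, any finite $p$-$d_k$-variation path in $G^k(V)$ is the Lyons lift of its projection to level $\floor p$. Hence properties (i)--(iv) of \cref{def:LeDonne-unique-path-lifting-p-variation} hold automatically for all $\pi^k_j$. Membership in $\metlift(\cG_p)$ therefore reduces to the isometry property (v):
\begin{equation*}
\pvd{\cL_k(X)}{p}{d_k}=\pvd{X}{p}{d_{\floor p}}, \qquad X\in WG\Omega^p_{d_{\floor p}},\ k\in\N_p.
\end{equation*}
For paths starting elsewhere, left translation reduces everything to pointed paths, since both sides are left-invariant and the lift commutes with left translation. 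The characterisation will be deduced from this reformulation, using \cref{lem:p-variation=1-variation-p-CC} and \cref{prop:Isometry-Property-Lift}.

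For ($\Leftarrow$), assume $d_k\le d_k^{p\textnormal{-}\CC}$ for all $k$. Since $\pi^k_{\floor p}$ is $1$-Lipschitz, \eqref{eq:p-variation-1-Lipschitz-inequality} gives $\pvd{\cL_k(X)}{p}{d_k}\ge \pvd{X}{p}{d_{\floor p}}$. Conversely, the assumption gives $\pvd{\cL_k(X)}{p}{d_k}\le \pvd{\cL_k(X)}{p}{d_k^{p\textnormal{-}\CC}}$. By \cref{prop:Isometry-Property-Lift} and \cref{lem:p-variation=1-variation-p-CC}, the latter equals $\pvd{X}{p}{d^{p\textnormal{-}\CC}_{\floor p}}=\pvd{X}{p}{d_{\floor p}}$. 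This yields (v).

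One point needs care here. \cref{lem:p-variation=1-variation-p-CC} uses the inequality $d_{\floor p}\le d^{p\textnormal{-}\CC}_{\floor p}$, which is the hypothesis at $k=\floor p$. It also uses that the infimum defining $\cost^p_{\floor p}$ does not depend on circularly assuming (v). At level $\floor p$ the lift is the identity, so this is fine. The definition of $\mathcal{A}_k$ only uses $S_k$ and the $p$-variation with respect to $d_{\floor p}$, so no circularity arises.

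For ($\Rightarrow$), assume (v) holds. Fix $g\in G^k(V)$ and an admissible $X\in\mathcal{A}_k(g)$; such paths exist by Chow's theorem and the equivalence of homogeneous metrics. Then
\begin{equation*}
d_k(1_k,g)=d_k(\cL_k(X)_0,\cL_k(X)_1)\le \pvd{\cL_k(X)}{p}{d_k}=\pvd{X}{p}{d_{\floor p}}.
\end{equation*}
Taking the infimum over $\mathcal{A}_k(g)$ gives $d_k(1_k,g)^p\le\cost^p_k(g)$. Left-invariance then yields $d_k\le d_k^{p\textnormal{-}\CC}$, exactly as in part (a) of \cref{lem:properties-of-dpCC}.

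The main obstacle is the initial reduction rather than either inequality. One must justify that, for homogeneous families, the lifting maps $\cL_k$ entering $\mathcal{A}_k$ and $d^{p\textnormal{-}\CC}_k$ do not depend on $d$. One must also check that finiteness of $p$-variation is the same notion for all such metrics, which follows from the bi-Lipschitz equivalence at each fixed level. Finally, uniqueness (iv) must hold for every homogeneous choice. I would settle this by invoking Lyons' extension theorem for $\rho^p$ and transporting it along the level-wise bi-Lipschitz equivalences, noting that property (v) is the only part that is sensitive to the constants.
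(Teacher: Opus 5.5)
Your proposal is correct and follows essentially the same route as the paper. The forward direction bounds $d_k(1_k,g)$ by the $p$-variation of the lift and takes the infimum over $\mathcal{A}_k(g)$. The reverse direction is the same sandwich $\pvd{X}{p}{d_{\floor p}}\le\pvd{\cL_k(X)}{p}{d_k}\le\pvd{\cL_k(X)}{p}{d_k^{p\textnormal{-}\CC}}=\pvd{X}{p}{d_{\floor p}}$, obtained from \cref{prop:Isometry-Property-Lift} and \cref{lem:p-variation=1-variation-p-CC}, with your remarks on (i)--(iv) and non-circularity only making explicit what the paper asserts in passing.
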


\begin{proof}
    Let $d = (d_k)_{k \in \N_p} \in \metlift(\cG_p)$. Fix a $k$ in $\N_p$, $g$ in $G^k(V)$ and let $X \in \mathcal{A}_k(g)$. By definition $S_k(X) = g$ and we get by the isometry property of the unique $p$-path lifting \ref{def:LeDonne-unique-path-lifting-p-variation-isometry} of \cref{def:LeDonne-unique-path-lifting-p-variation}
    \begin{equation*}
        d_k(1_k,g) \leq \pvd{\cL_k(X)}{p}{d_k} = \pvd{X}{p}{d_{\floor p}}
    \end{equation*}
    Taking the infimum over all paths of $\mathcal{A}_k(g)$, using the definition of $d^{p\textnormal{-}\CC}_k$ and noting that $k$ was arbitrary, it follows pointwise that $d_k \leq d_k^{p\textnormal{-}\CC}$. \\
    
    For the converse, let $d = (d_k)_{k \in \N_p} \in \met(\cG_p)$ satisfying $d_k \leq d^{p\textnormal{-}\CC}_k$ pointwise for all $k \in \N_p$. Recall that $\cL_k$ is here understood via the Lyons' lift. Then, we need to check the properties of \cref{def:LeDonne-unique-path-lifting-p-variation}, where due to the Lyons' lift it suffices to check the isometry property \ref{def:LeDonne-unique-path-lifting-p-variation-isometry}. Let $X \in WG\Omega^p_{d_{\floor p}}$. Since by assumption $d \in \met(\cG_p)$, the canonical projections $\pi^{k+1}_k$ are $1$-Lipschitz and it holds $X = \pi^k_{\floor p} \circ \cL_k(X)$. Thus, using the $1$-Lipschitz bound for the $p$-variation, the pointwise metric inequality, \cref{def:LeDonne-unique-path-lifting-p-variation-isometry} under the Lyons' lift, and finally \cref{lem:p-variation=1-variation-p-CC} gives
    \begin{equation*}
        \pvd{X}{p}{d_{\floor p}} \leq \pvd{\cL_k(X)}{p}{d_k} \leq \pvd{\cL_k(X)}{p}{d^{p\textnormal{-}\CC}} = \pvd{X}{p}{d^{p\textnormal{-}\CC}} = \pvd{X}{p}{d_{\floor p}}.
    \end{equation*}
    Hence, $d \in \metlift(\cG_p)$.
\end{proof}

To attain minimisers in the $p>1$ case for general metrics requires both an assumption on $(G^{\floor p},d_{{\floor{p}}})$ and that $S_k$ is sequentially continuous for fixed $k \in \N_p$. We shall formulate the necessary conditions using controls instead, cf. \cref{lem:properties-of-dpCC}. A sufficient version mimicking the $p=1$ case, would be the following.

\begin{proposition} \label{prop:Attainment-of-minimisers}
    For $k$ in $\N_p$,  
    assume $(G^{\floor p}(V),d_{{\floor{p}}})$ is proper as a metric space, $S_k$ is sequentially continuous w.r.t.~to uniform convergence in $d_{\floor p}$ and each $G^k(V)$ is path connected, where there exists at least one path connecting any pair of points with finite $p$-variation.\footnote{We specifically keep \eqref{eq:Diagram-p-variation-LZ} in mind for which continuity is a given by the isometry property.} Then, for any $g \in G^k(V)$, $\cost^p_k(g)$ is finite and the infimum attained.
\end{proposition}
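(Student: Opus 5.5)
The plan is the direct method of the calculus of variations, mimicking the classical existence proof for Carnot--Carathéodory geodesics. First I show finiteness. By assumption there is a path $Z$ in $G^k(V)$ from $1_k$ to $g$ of finite $p$-$d_k$-variation. Since $d\in\metlift(\cG_p)$, the path $X:=\pi^k_{\floor p}\circ Z$ has finite $p$-$d_{\floor p}$-variation by \eqref{eq:p-variation-1-Lipschitz-inequality}. By uniqueness of lifts (\cref{rem:LeDonneZuest-proof-2.6-and-2.7}) we get $Z=\cL_k(X)$, hence $S_k(X)=g$ and $X\in\mathcal A_k(g)$. Therefore $\cost^p_k(g)\le \pvd{X}{p}{d_{\floor p}}^p<\infty$.

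Next I choose a minimising sequence $X^{(n)}\in\mathcal A_k(g)$ with $\pvd{X^{(n)}}{p}{d_{\floor p}}^p\downarrow \cost^p_k(g)$, and set $M:=\sup_n\pvd{X^{(n)}}{p}{d_{\floor p}}<\infty$. If some $X^{(n)}$ is constant, then $g=1_k$ and there is nothing to prove. Otherwise I replace each $X^{(n)}$ by its $1/p$-Hölder reparametrisation (\cref{rem:more-on-p-variation}). This changes neither the $p$-variation nor the endpoint, because $S_k$ is invariant under ordered reparametrisations (\cref{lem:Homomorphism-And-Image-of-endpoint-map}). The reparametrised paths satisfy $d_{\floor p}(X^{(n)}_s,X^{(n)}_t)\le M|t-s|^{1/p}$, so they are uniformly equicontinuous. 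By \eqref{eq:Increment-Estimate-Control} they also take values in the closed ball $\bar B_{d_{\floor p}}(1_{\floor p},M)$, which is compact by properness. Arzelà--Ascoli then yields a subsequence converging uniformly to some $X^\star\colon[0,1]\to G^{\floor p}(V)$ with $X^\star_0=1_{\floor p}$.

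To conclude, I use lower semicontinuity of $p$-variation under pointwise convergence (\cite[Lemma 5.12]{Friz_Victoir_2010_fullbook}). It gives $\pvd{X^\star}{p}{d_{\floor p}}\le\liminf_n \pvd{X^{(n)}}{p}{d_{\floor p}}=\cost^p_k(g)^{1/p}$, so in particular $X^\star\in WG\Omega^p_{d_{\floor p}}$. By the assumed sequential continuity of $S_k$ under uniform convergence, $S_k(X^\star)=\lim_n S_k(X^{(n)})=g$, so $X^\star\in\mathcal A_k(g)$. Hence $\cost^p_k(g)\le\pvd{X^\star}{p}{d_{\floor p}}^p\le\cost^p_k(g)$, and the infimum is attained.

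The main obstacle is the endpoint step. Uniform convergence alone would not preserve the signature for $p\ge2$, since the lifted higher levels need not converge. This is exactly why continuity of $S_k$ is imposed as a hypothesis. In the concrete setting of \eqref{eq:Diagram-p-variation-LZ}, I would expect to verify it through the uniform $p$-variation bound $M$ combined with the continuity of the Lyons lift under uniform convergence with bounded $p$-variation, as in \cite[Chapter 8]{Friz_Victoir_2010_fullbook}; this is the continuous $p$-path lifting property. A smaller technical point is making sure the Hölder reparametrisation does not alter membership in $\mathcal A_k(g)$, and reparametrisation invariance handles that.
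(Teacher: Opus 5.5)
Your proposal is correct and follows essentially the same direct-method argument as the paper: a minimising sequence, $1/p$-Hölder reparametrisation, Arzelà--Ascoli in a compact closed ball, lower semicontinuity of $p$-variation, and the assumed sequential continuity of $S_k$ to keep the limit in $\mathcal A_k(g)$. Your finiteness step just unpacks the surjectivity part of \cref{lem:Homomorphism-And-Image-of-endpoint-map}, which the paper cites directly. You are also a bit more careful than the paper about the constant-path case and about reparametrisation invariance of $S_k$.
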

Recall that any finite dimensional vector space $V$ equipped with a norm is a proper metric space and therefore the properness requirement is the analogue for the ${\floor{p}} \geq 2$ setting.
\begin{proof}
    The finiteness follows by \cref{lem:Homomorphism-And-Image-of-endpoint-map}. By the definition of the infimum, there exists a sequence $(X^{(n)})_{n \in \N} \subset WG\Omega_d^p$ with $S_k(X^{(n)}) = g$ and with $c_n = \pvd{X^{(n)}}{p}{d_{\floor{p}}}^p$ we have $c_n \downarrow \cost^p_k(g)$ as $n\rightarrow \infty$. \\
    Since the sequence is decreasing and each path has finite $p$-$d_{\floor{p}}$-variation, we can set $M := \sup_{n \in \N} c_n < \infty$ (the sequence is bounded). Hence, for all $t$ in $ [0,1]$ and any natural number, the inequality $d_{\floor{p}}(1,X^{(n)}_t)^p \leq c_n \leq M$ holds. It follows that $d_{\floor{p}}(1,X^{(n)}_t) \leq M^{1/p}$. 
    Furthermore, $X^{(n)}$ lies inside a closed ball which is compact by the properness of $(G^{\floor p}(V),d_{\floor p})$. Using the Hölder reparametrisation for each $X^{(n)}$, we further conclude that the sequence is equicontinuous and infer the existence of a subsequence $(X^{(n_m)})_{m \in \N}$ that converges uniformly (w.r.t.~$d_{{\floor{p}}}$) to a continuous limit path $X^*$.
    Without loss of generality, we can choose $(X^{(n)})_{n \in \N}$ to be that sequence. 
    We have uniform convergence and hence point-wise convergence (w.r.t.~$d_{{\floor{p}}}$). 
    Consequently,
    \begin{equation*}
        \pvd{X^*}{p}{d_{{\floor{p}}}}^p \leq \liminf_{n \to \infty} \pvd{X^{(n)}}{p}{d_{{\floor{p}}}}^p,
    \end{equation*}
    and $X^*$ is also Hölder with regularity $1/p$. By sequential continuity, it follows from $S_k(X^{(n)}) = g$ that $S_k(X^*) = g$. \\
    To see that there exists such a path $X^*$, so that $\cost^p_k(g) = \pvd{X^*}{p}{d_{{\floor{p}}}}^p$, note that by definition $\cost^p_k(g) \leq \pvd{X^*}{p}{d_{{\floor{p}}}}^p$. On the other hand from the lower semicontinuity it follows that
    \begin{equation*}
        \pvd{X^*}{p}{d_{{\floor{p}}}}^p \leq \liminf_{n \to \infty} c_n = \cost^p_k(g).
    \end{equation*}
    This ends the proof.
\end{proof}
\begin{remark}
    In \cref{def:LeDonne-unique-path-lifting-p-variation} the projections $\pi^{k+1}_k \colon (G^{k+1}(V),\rho^p_{k+1}) \to (G^k(V),\rho^p_k)$ were not assumed to be submetries. 
    Indeed, this feature was shown in \cref{counterexample:Submetry-Rho} to fail in general for the choice $\rho^1 = (\rho^1_k)_{k \in \N}$. However, the choice of $d^{p\textnormal{-}\CC} = (d^{p\textnormal{-}\CC}_k)_{k \in \N_p}$ for any $p \geq 1$ makes projections turn into submetries under the attainment conditions of \cref{prop:Attainment-of-minimisers}. 
    For that it suffices to show that, for every $g_k \in G^k(V)$, there exists a $g_{k+1} \in G^{k+1}(V)$ with $\pi^{k+1}_k(g_{k+1}) = g_k$ and
    \begin{equation*}
        d^{p\textnormal{-}\CC}_k(1_k,g_k) = d^{p\textnormal{-}\CC}_{k+1}(1_{k+1},g_{k+1})
    \end{equation*}
    Suppose now $X^\star$ to be the minimiser of $d^{p\textnormal{-}\CC}_k(1_k,g_k)$ such that $g_k = S_k(X^\star)$. Then choosing the point $g_{k+1} = S_{k+1}(X^\star)$, proves the submetry property, since for any other path $Y$ yielding $S_{k+1}(Y) = g_{k+1} = S_{k+1}(X^\star)$, it holds
    \begin{equation*}
        \pv{X^\star}{p} \leq \pv{Y}{p}.
    \end{equation*}
    Thus $X^\star$ is also minimising for step $k+1$ and the projections become submetries.
\end{remark}
For the subsequent discussion we do not require that the minimizer is attained. However, recall that some metrics of interest, such as $\rho^p_k$, make $(G^k(V),\rho^p_k)$ a proper metric space. \\

\paragraph{\textbf{Identifying group-like elements and signatures.}}
In the $p = 1$ case, as a result of \cite[Theorem 4.4]{LeDonneZuest_public}, given the family of $\CC$ metrics $d^{1\textnormal{-}\CC} = (d^{1\textnormal{-}\CC}_k)_{k \in \N}$, we have
\begin{equation*} \label{eq:Intrinsic-vs-Extrinsic}
    SG^1 = G_{d^{1\textnormal{-}\CC}-p.r.c.},
\end{equation*}
as (pointed) groups, where $G_{d^{1\textnormal{-}\CC}-p.r.c.}$ is group isomorphic (under $\Pi^1$ of \eqref{eq:Inverse-limit-to-product-conversion-map-p}) to the projective limit $(G^\infty_1(V),d_\infty^{1\textnormal{-}\CC})$ in the category of pointed metric groups. In fact, $\Pi^p$ can even be made an isomorphism in the category of pointed metric groups for any $p \geq 1$ when choosing $d^{p\textnormal{-}\CC} = (d^{p\textnormal{-}\CC}_k)_{k \in \N_p}$. 
In that case, we know that the inverse limit carries the topology of an $\R$-tree, hence the associated topological space is a topological tree, which corresponds to 
the $p=1$ case for $D^p$ of \cref{sec:Rough-Path-Theory}.
Lastly, it follows from \hyperref[thm:NoGo-Theorem]{NoGo-Theorem~\ref*{thm:NoGo-Theorem}} that the resulting signature group is not a pointed topological group for $\dim V \geq 2$.

We are now ready to establish the same relation for $p > 1$ as in \cite{LeDonneZuest_public}, namely
\begin{equation*}
    G^\infty_p \xrightarrow{\Pi^p,\sim} G_{d^{p\textnormal{-}\CC}\textnormal{-}p.r.c.} = SG^p
\end{equation*}
with the induced metric, which resembles the $p = 1$ case and establishes the diagram \eqref{eq:Diagram-p-variation-LZ}.

\begin{proposition} \label{prop:Isomorphisms}
    $\Pi^p|_{G^\infty_p(V)}$ is a group isomorphism. If
    in addition  
    $(G^{\floor p}(V),d_{\floor{p}})$ is proper and for all $k$ in $\mathbb N_p$, $S_k$ is sequentially continuous, then $\Pi^p$ is an isomorphism in the category of pointed metric spaces, in the sense of \eqref{eq:Diagram-p-variation-LZ}, where the induced (left-invariant) metric on $SG^p_d$ is given by
    \begin{equation} \label{eq:Inverse-Metric}
        d^p(1,g) = \inf_{X \in \mathcal{A}(g)} \pvd{X}{p}{d_{{\floor{p}}}}, \quad \mathcal{A}(g) = \{ X \in WG\Omega^p_{d_{\floor{p}}} \mid S(X) = g \}, \qquad \forall g \in SG^p_d.
    \end{equation}
\end{proposition}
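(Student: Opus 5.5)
The algebraic part is handled directly through the definition \eqref{eq:Inverse-limit-to-product-conversion-map-p}. For $(g_k)_{k\in\N_p}\in G^\infty_p(V)$ the compatibility $\pi^k_j(g_k)=g_j$ ensures that every $g_k$ equals the $k$-truncation of $\Pi^p((g_k)_k)$. Consequently $\Pi_k\circ\Pi^p=\pi^\infty_k$, and $\Pi^p$ is injective. Multiplication in $T((V))$ is the Cauchy product, and its degree-$k$ component depends only on the components of degree $\le k$. Hence the $k$-truncation of $\Pi^p(g)\Pi^p(h)$ is $g_kh_k$, so $\Pi^p$ is a homomorphism. Surjectivity onto the group-like elements follows by setting $g_k:=\Pi_k(g)$; each $g_k$ lies in $G^k(V)$ because truncations of group-like elements are group-like. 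Restricting to $G^\infty_p(V)$ together with the finiteness condition \eqref{eq:Inverse-Limit-Metric}, I will identify the image with $G_{d^{p\textnormal{-}\CC}\textnormal{-}p.r.c.}(V)$ from \eqref{eq:Group-like-elements-with-more-completion-under-metrics}. By construction $\tilde d_\infty(1,\Pi^p(g))=d^{p\textnormal{-}\CC}_\infty(1_\infty,g)$, so $\Pi^p$ is an isometry onto that set.

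The metric part consists of two claims: (a) $G_{d^{p\textnormal{-}\CC}\textnormal{-}p.r.c.}=SG^p_d$, and (b) $\sup_k d^{p\textnormal{-}\CC}_k(1_k,\Pi_k(g))=d^p(1,g)$ as defined in \eqref{eq:Inverse-Metric}. The inequality $\le$ in (b) and the inclusion $SG^p_d\subseteq G_{d^{p\textnormal{-}\CC}\textnormal{-}p.r.c.}$ are straightforward. If $S(X)=g$, then $X\in\mathcal A_k(\Pi_k g)$ for every $k$, so $d^{p\textnormal{-}\CC}_k(1_k,\Pi_k g)\le\pvd{X}{p}{d_{\floor p}}$. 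Taking the infimum over $X$ and the supremum over $k$ gives $\sup_k\le d^p(1,g)<\infty$.

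The reverse direction is the main obstacle, and I will adapt the Arzelà--Ascoli argument of \cref{prop:Geodesic-triangular-argument} and \cref{prop:Attainment-of-minimisers}. Fix $g$ with $M:=\sup_k d^{p\textnormal{-}\CC}_k(1_k,\Pi_k g)<\infty$. By \cref{prop:Attainment-of-minimisers}, which uses properness of $d_{\floor p}$ and sequential continuity of $S_k$, there is for each $k$ a minimiser $X^{(k)}\in\mathcal A_k(\Pi_k g)$ with $\pvd{X^{(k)}}{p}{d_{\floor p}}\le M$. I parametrise it to be $1/p$-Hölder with constant at most $M$. These paths lie in the compact ball $\bar B(1,M)\subset G^{\floor p}(V)$ and are equicontinuous, so a subsequence converges uniformly to some $X^*$. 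Lower semicontinuity of $p$-variation then gives $\pvd{X^*}{p}{d_{\floor p}}\le M$. Fix $j$. For all $k\ge j$ we have $S_j(X^{(k)})=\pi^k_j(\Pi_k g)=\Pi_j g$, and sequential continuity of $S_j$ yields $S_j(X^*)=\Pi_j g$. Since this holds for every $j$, $S(X^*)=g$. Hence $g\in SG^p_d$ and $d^p(1,g)\le M$, which proves (a) and (b).

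The delicate point is making sure that ``sequential continuity'' is with respect to uniform convergence under a uniform $p$-variation bound. That is exactly the hypothesis of \cref{prop:Attainment-of-minimisers}, and in the homogeneous case it follows from continuity of the Lyons lift in $p'$-variation for $p'>p$ via interpolation. Left-invariance of both metrics then extends the identity $d^{p\textnormal{-}\CC}_\infty=d^p\circ(\Pi^p\times\Pi^p)$ from pairs involving the unit to all pairs. This makes $\Pi^p$ a surjective isometric homomorphism, i.e.\ an isomorphism in $\textbf{MetGrp}_\star$.
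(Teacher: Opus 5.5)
Your proposal is correct and follows essentially the same route as the paper: you get the inclusion $SG^p_d\subseteq G_{d^{p\textnormal{-}\CC}\textnormal{-}p.r.c.}(V)$ and the inequality $\sup_k d^{p\textnormal{-}\CC}_k(1_k,\Pi_k g)\le d^p(1,g)$ from $\mathcal A(g)\subseteq\mathcal A_k(\Pi_k g)$, and you then use Arzelà--Ascoli on $1/p$-Hölder reparametrised paths in a compact ball of $G^{\floor p}(V)$, lower semicontinuity of $p$-variation and sequential continuity of each $S_m$ to produce $X^*$ with $S(X^*)=g$ and $\pvd{X^*}{p}{d_{\floor p}}\le\sup_k d^{p\textnormal{-}\CC}_k(1_k,\Pi_k g)$. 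The only cosmetic difference is that you take exact minimisers supplied by \cref{prop:Attainment-of-minimisers}, whereas the paper works directly with $\varepsilon_k$-near-minimisers of $a_k=d^{p\textnormal{-}\CC}_k(1_k,\Pi_k g)^p$; this avoids invoking attainment and yields the same bound.
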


We provide a proof in full detail.
\begin{proof}
    Recall that $\Pi^p$ is a group isomorphism between the set $G((V))$ of group-like elements and $\prod_{k \in \N_p} G^k(V)$ with $\pi^{k+1}_k(g_{k+1}) = g_k$ for every $k \in \N_p$ and $(g_k)_{k \in \N_p} \in\prod_{k \in \N_p} G^k(V) $. Then by the definition of $G^\infty_{p}(V)$ and $G_{d^{p\textnormal{-}\CC}\textnormal{-}p.r.c.}(V)$ together with the compatibility $g = \Pi^p((g_{k})_{k\in \N_p}))$, the restriction $\Pi^p|_{G^\infty_p(V)} \colon G^\infty_{p}(V) \to G_{d^{p\textnormal{-}\CC}\textnormal{-}p.r.c.}$ is a bijection and it remains to show $G_{d^{p\textnormal{-}\CC}\textnormal{-}p.r.c.} = SG^p_d$ as well as the isometry. \\
    
    For that let $g = S(X) \in SG^p_d$ where $X \in WG\Omega^p_{d_{\floor{p}}}$ realises $g$. Then, for every $k \in \N_p$ set $g_k = \Pi_k(g) = S_k(X)$ and note that by definition
    \begin{equation*}
            d^{p\textnormal{-}\CC}_k(1_k,g_k) \leq \pvd{X}{p}{d_{\floor{p}}} < \infty
    \end{equation*}
    and by taking the supremum over $k \in \N_p$ on the left hand side we conclude $g \in G_{d^{p\textnormal{-}\CC}\textnormal{-}p.r.c.}(V)$ and hence $SG^p_d \subseteq G_{d^{p\textnormal{-}\CC}\textnormal{-}p.r.c.}(V)$. \\
    
    For the converse direction and the isometry,
    \begin{equation*}
        \lim_{k \to \infty} \inf_{X \in \mathcal{A}_k(\Pi_k(g))} \pvd{X}{p}{d_{{\floor{p}}}}^p = \inf_{X \in \mathcal{A}(g)} \pvd{X}{p}{d_{{\floor{p}}}}^p, \qquad \forall g \in G_{d^{p\textnormal{-}\CC}\textnormal{-}p.r.c.}(V),
    \end{equation*}
    define
    \begin{equation*}
        a_k = \inf_{X \in \mathcal{A}_k(g_k)} \pvd{X}{p}{d_{{\floor{p}}}}^p = d^{p\textnormal{-}\CC}_k(1_k,\Pi_k(g))^p \in [0,\infty).
    \end{equation*}
    By the nesting of the $\mathcal{A}_k(g_k)$'s (or equivalently $1$-Lipschitz property of the $d_k^{p\textnormal{-}\CC}$'s), and since $g \in G_{d^{p\textnormal{-}\CC}\textnormal{-}p.r.c.}(V)$, it follows that
    \begin{enumerate}
        \item $a_k$ is monotonically increasing, and
        \item $a_k$ is bounded, since $g \in G$.
    \end{enumerate}
    Hence, $a = \lim_{k \to \infty} a_k < \infty$. \\
    
    By the definition of infimum, there exists a sequence\footnote{One could choose $\varepsilon_k = 1/k$.} $(\varepsilon_k)_{k \in \N_p}$ with $\varepsilon_k \downarrow 0$, and a sequence $(X^{(k)})_{k \in \N_p}$ with $X^{(k)} \in \mathcal{A}_k(\Pi_k(g))$ such that
    \begin{equation*}
        \pvd{X^{(k)}}{p}{d_{{\floor{p}}}}^p \leq a_k + \varepsilon_k.
    \end{equation*}
    Then, since $a_k \leq a < \infty$ and $\varepsilon_k \leq \varepsilon_{\floor p}$, we can take the Hölder reparametrisation of each $X^{(k)}$ with uniform Hölder constant $(a + \varepsilon_{\floor p})^{1/p}$, to show that the sequence $(X^{(k)})_{k \in \N_p}$ is equicontinuous and 
    \begin{equation*}
        X^{(k)}([0,1]) \subset \overline{B}_{(a + \varepsilon_{\floor p})^{1/p}}(1_{\floor{p}}) = \{ x \in G^{\floor{p}}(V) \mid d_{\floor{p}}(1_{\floor{p}},x) \leq (a + \varepsilon_{\floor{p}})^{1/p} \}.
    \end{equation*}
    Since closed balls are compact, we can invoke the Arzelà–Ascoli \cite[Theorem 1.4]{Friz_Victoir_2010_fullbook} theorem to infer the existence of a subsequence $(X^{(k_j)})$ that converges uniformly in $d_{\floor{p}}$--, hence also pointwise-- to a limit $X^*$.
    Using the subsequence $(X^{(k_j)})_{j \in \N}$, we have
    \begin{equation} \label{eq:Sequence-Upperbound-Isomorphism-Pi-p}
        \begin{aligned}
            \pvd{X^*}{p}{d_{\floor{p}}}^p &\leq \liminf_{j \to \infty} \pvd{X^{(k_j)}}{p}{d_{\floor{p}}}^p \leq \liminf_{j \to \infty} (a_{k_j} + \varepsilon_{k_j}) \\
            &= \lim_{k \to \infty} (a_k + \varepsilon_k) = a < \infty
        \end{aligned}
    \end{equation}

    To conclude the converse direction, notice that by the compatibility of the projections, once $k_j \geq m$ for some $m \in \N_p$ with $g_k = \Pi_k(g)$, we obtain
    \begin{equation*}
        S_m(X^{(k_j)}) = \pi^{k_j}_m(S_{k_j}(X^{(k_j)})) = \pi^{k_j}_m(g_{k_j}) = g_m,
    \end{equation*}
    so that by sequential continuity it follows that
    \begin{equation} \label{eq:signature-memebership}
        g_m = S_m(X^{(k_j)}) \to S_m(X^*) = g_m, \qquad \forall m \in \N_p
    \end{equation}
    and hence $X^* \in \mathcal{A}(g)$ and $g \in SG^p_d$. Therefore, $G_{d^{p\textnormal{-}\CC}\textnormal{-}p.r.c.}(V) \subseteq SG^p_d$.

    To conclude the isometry, notice that by \eqref{eq:signature-memebership} and
    \begin{equation*}
        \mathcal{A}(g) = \lim_{k \to \infty} \mathcal{A}_k(\Pi_k(g)) = \bigcap_{k \in \N_p} \mathcal{A}_k(\Pi_k(g)), \qquad \forall g \in G((V)),
    \end{equation*}
    we get $\mathcal{A}(g) \neq \varnothing$ for all $g \in G_{d^{p\textnormal{-}\CC}\textnormal{-}p.r.c.}(V)$. Therefore,
    \begin{equation*}
        \underline{a} = \inf_{X \in \mathcal{A}(g)} \pvd{X}{p}{d_{{\floor{p}}}}^p < \infty.
    \end{equation*}
    and $a \leq \underline{a}$. Since by definition and \eqref{eq:Sequence-Upperbound-Isomorphism-Pi-p}, $\underline{a} \leq \pvd{X^*}{p}{d_{\floor{p}}}^p \leq a$ the proof follows.
\end{proof}

\begin{remark} The diagram \eqref{eq:Diagram-p-variation-LZ} therefore gives an explicit candidate for the resulting metric topology. In particular, given any object in \eqref{eq:Diagram-p-variation-LZ}, we can compute the corresponding metrics by pushing it forward along the respective isomorphism.
\end{remark}

\paragraph{\textbf{Signatures form an $\R$-tree.}}

Together with \cref{sec:Rough-Path-Theory}, \cref{prop:Isomorphisms} now lets us establish the concrete version of \cref{thm:Main-Theorem-Commutative-Diagram}.
\begin{proposition}\label{prop:diag_poitmetgp}
    Let $p \geq 1$ and $V$ be a finite dimensional real vector space. Then the following commutative diagram holds in the category of pointed metric groups:
    \begin{equation} \label{eq:Diagram-p-variation-LZ-precise}
            \begin{tikzcd}
            {((WG\Omega^p/\sim,[o]_\tau),\delta^p)} \arrow[r, "{\cL,\sim}"]\arrow[dd, "\sim"] \arrow[rdd, "{S,\sim}"] & {((\mathcal{I}^p,[O]_\circlearrowleft),D^p)} \arrow[dd, "{\operatorname{ev}_1,\sim}"] \\
            & \\
            {((G^\infty_p(V),1_\infty),d_{\infty}^{p\textnormal{-}\CC})} \arrow[r, "{\Pi^p,\sim}"] & {((SG^p,1),d^p)}
        \end{tikzcd}
    \end{equation}
\end{proposition}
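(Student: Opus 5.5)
The plan is to assemble the diagram edge by edge from results already established, and then check commutativity and that every arrow is an isomorphism in $\textbf{MetGrp}_\star$, i.e.\ a surjective isometric group homomorphism preserving the base points.

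First, the upper-right triangle. The map $\cL\colon (WG\Omega^p/\sim,\delta^p)\to(\cI^p,D^p)$ is well defined and a group isomorphism by \cref{prop:Kernel-signature-characterisation} (kernel of $S$ equals the tree-like paths). Tree-like reduction $X\mapsto X^\tau$ corresponds to loop erasure of the lift. Hence $\cL((\rev X^\tau\sqcup Y^\tau)^\tau)=(\rev{\cL(X)}^\circlearrowleft\sqcup \cL(Y)^\circlearrowleft)^\circlearrowleft$, using \cref{lem:Homomorphism-of-lifts}. The isometry property then follows from the lift isometry (v) of \cref{def:LeDonne-unique-path-lifting-p-variation}, which preserves $p$-variation: $\delta^p=D^p\circ(\cL\times\cL)$. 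The map $\eva_1\colon(\cI^p,D^p)\to(SG^p,d^p)$ is an isometry onto $SG^p=PG_{\rho^p\textnormal{-}p.r.c.}$ by \cref{lem:Image-of-ev1}, and it is a homomorphism by \cref{lem:Homomorphism-And-Image-of-endpoint-map}. Since $S=\eva_1\circ\cL$ by definition, the diagonal is the composite isomorphism and the triangle commutes. Base points are preserved because the constant path maps to $1$.

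Second, the lower triangle. By \cref{prop:Isomorphisms}, applied with $d=\rho^p$, the map $\Pi^p\colon(G^\infty_p(V),d_\infty^{p\textnormal{-}\CC})\to(SG^p,d^p)$ is an isometric group isomorphism. The hypotheses hold here: $(G^{\floor p}(V),\rho^p_{\floor p})$ is proper, and each $S_k$ is sequentially continuous by continuity of the Lyons lift on bounded $p$-variation sets, as noted in the footnote to \cref{prop:Attainment-of-minimisers}. The vertical left arrow is then defined as $(\Pi^p)^{-1}\circ S$. It is an isomorphism as a composite of isomorphisms, which makes the lower triangle commute by construction. Explicitly, it sends $[X]_\tau\mapsto(S_k(X))_{k\in\N_p}$.

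The main obstacle is reconciling the two metrics named $d^p$ on $SG^p$. In \cref{lem:Reformulation-Uniqueness-to-tree}, $d^p$ is an infimum over injective paths in $G_{\rho^p\textnormal{-}p.r.c.}$. In \eqref{eq:Inverse-Metric}, it is an infimum over rough paths $X$ with $S(X)=g$. I will show these coincide. Given $X\in\mathcal A(g)$, the lift $\cL(X)$ has the same $p$-variation by the isometry property. Its loop erasure is the injective arc from $1$ to $g$ and has no larger $p$-variation, which gives $\le$. Conversely, the unique injective arc $Y$ has finite $p$-variation. Its projection $\Pi_{\floor p}\circ Y\in\mathcal A(g)$ has equal $p$-variation, because $Y$ is its lift (uniqueness, \cref{rem:LeDonneZuest-proof-2.6-and-2.7}(c)), which gives $\ge$. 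The same identification applied to $g^{-1}h$ yields left-invariance, so the metrics agree everywhere and the diagram commutes in $\textbf{MetGrp}_\star$.
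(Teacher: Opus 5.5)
Your proposal is correct and follows the same route as the paper, which only indicates that the result follows from Section~3 together with \cref{prop:Isomorphisms}: the right triangle comes from \cref{prop:Kernel-signature-characterisation}, \cref{lem:Image-of-ev1} and the choice of $\delta^p$ that makes $\cL$ an isometry, the bottom arrow from \cref{prop:Isomorphisms} with $d=\rho^p$, and the left arrow is the composite $(\Pi^p)^{-1}\circ S$. Your check of properness and sequential continuity of $S_k$, and your identification of the two metrics called $d^p$ on $SG^p$, fill in details the paper leaves implicit; it asserts the coincidence of these metrics only in the remark after \cref{cor:SGRtree}.
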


Along the same lines of the second part of \cite[Theorem 4.4]{LeDonneZuest_public}, we get the following result.
\begin{corollary}\label{cor:SGRtree}
    The signature group $(SG^p,d)$ is homeomorphic to an $\R$-tree.
\end{corollary}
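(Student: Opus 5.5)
The plan is to transport the tree structure already established for the lifted paths to the signature group, along the isometries of Diagram~\eqref{eq:Diagram-p-variation-LZ-precise}, and then invoke Mayer's characterisation (\cref{thm:Toptree-Rtree}). Concretely, I take the metric $d$ on $SG^p$ to be the one from \cref{lem:Image-of-ev1}, i.e.\ the restriction of $d^p$ from \cref{lem:Reformulation-Uniqueness-to-tree}. By \cref{lem:Image-of-ev1}, $SG^p = PG_{\rho^p\textnormal{-}p.r.c.}$ as sets, and $\eva_1|_{\cI^p}$ is an isometry from $(\cI^p,D^p)$ onto $(SG^p,d^p)$. \cref{lem:Reformulation-Uniqueness-to-tree} shows that $(PG_{\rho^p\textnormal{-}p.r.c.},d^p)$ is a metric space which is uniquely arcwise connected and locally arcwise connected. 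Since a metric space is in particular metrisable, it is a topological tree in the sense of \cref{sec:Paths-over-pointed-metric-groups}. \cref{thm:Toptree-Rtree} then gives a homeomorphism onto an $\R$-tree. Explicitly, \cref{thm:Height-Function-existence-Rtree-Toptree} lets me write the $\R$-tree metric as $d_{H}$ for a height function $H$ rooted at $1$. A natural candidate for $H$ is $\pv{\cdot}{p}^p$ along the unique arc, which reproduces the metric $D'$ of \cref{lem:euivalent-topologies}; that lemma already shows $D'$ induces the same topology as $D^p$.

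Second, in order to cover the metric coming from the inverse limit, I would transport along the isometry $\Pi^p$ of \cref{prop:Isomorphisms} to $(G^\infty_p(V),d^{p\textnormal{-}\CC}_\infty)$. Isometries are homeomorphisms, so topological trees are mapped to topological trees. For the canonical choice $d=\rho^p$, the metric $d^p$ of \eqref{eq:Inverse-Metric}, an infimum over all paths with $S(X)=g$, must coincide with the metric $d^p$ of \cref{lem:Reformulation-Uniqueness-to-tree}, an infimum over injective paths in the group-like elements. Both inequalities follow from the lifting isometry. If $X\in\mathcal A(g)$, then $\cL(X)$ has the same $p$-variation and ends at $g$, and loop erasure produces an injective path of no larger $p$-variation. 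Conversely, an injective path $Y$ from $1$ to $g$ is the lift of $\Pi_{\floor p}\circ Y$, by \cref{rem:LeDonneZuest-proof-2.6-and-2.7}(c), which has the same $p$-variation. Hence all metrics appearing in the diagram agree on $SG^p$, and the topology is the one for which the UAC and LAC properties were proved.

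The main obstacle I expect is this last identification. It relies on loop erasure not increasing $p$-variation and on the injective representative being unique (\cref{lem:Uniqueness-arcs-p-variation-limit} adapted to $G_{\rho^p\textnormal{-}p.r.c.}$). I also need that the lifts of weakly geometric paths take values in $G_{\rho^p\textnormal{-}p.r.c.}$ with matching $p$-variation, which is exactly the unique $p$-path lifting property of $\rho^p\in\metlift(\cG_p)$. If this identification were only available up to topology rather than isometry, I would fall back on \cref{lem:euivalent-topologies}, since only the topology matters for the corollary. Finally, I would remark that for $\dim V\ge 2$ the space is not homeomorphic to $\R$, because there are at least two distinct branches at $1$, for instance the arcs towards $\exp(e_1)$ and $\exp(e_2)$. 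The NoGo-Theorem~\ref{thm:NoGo-Theorem} then yields the non-topological-group statement announced in the introduction.
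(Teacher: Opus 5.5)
Your argument is correct and follows the paper's route. \cref{lem:Image-of-ev1} and \cref{lem:Reformulation-Uniqueness-to-tree} make $(SG^p,d^p)=(PG_{\rho^p\textnormal{-}p.r.c.},d^p)$ a topological tree, and \cref{thm:Toptree-Rtree} then gives the $\R$-tree. \cref{prop:Isomorphisms} transports this to the inverse-limit metric. Your two-sided check that the metric \eqref{eq:Inverse-Metric} coincides with the injective-path metric of \cref{lem:Reformulation-Uniqueness-to-tree} is a useful addition: it uses the lifting isometry and the fact that loop erasure does not increase $p$-variation, and it makes explicit what the paper only asserts in the subsequent remark.

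One small slip in your closing aside, which the corollary does not need: two branches at $1$ do not rule out a homeomorphism with $\R$, since every point of $\R$ has two branches. You need a point of valence at least three, e.g.\ by also using the arc towards $\exp(-e_1)$.
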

\begin{proof}
    This follows from \cref{lem:Reformulation-Uniqueness-to-tree}, \cref{lem:Image-of-ev1} and \cref{prop:Isomorphisms}.
\end{proof}
\begin{remark}
    \begin{enumerate}
        \item Since left translations are isometries, the \hyperref[thm:NoGo-Theorem]{NoGo-Theorem~\ref*{thm:NoGo-Theorem}}, forces the failure of continuity of some right translation of $SG^p$ w.r.t.~the induced metric (for $\dim V \geq 2$).
        \item The sets $(SG^p,d^p)$ (recall \cref{lem:Reformulation-Uniqueness-to-tree}) and $(SG^p,d)$ coincide as pointed metric groups.
    \end{enumerate}
\end{remark}

\paragraph{\textbf{More general limits as trees.}} The construction of \eqref{eq:Diagram-p-variation-LZ-precise} and the above discussion as well as in \cref{sec:Rough-Path-Theory} yield a negative answer to the question in \cite[Remark 8.20]{Schmeding_2022} whether a suitable limiting metric on the untruncated signature group will turn it into a Lie group: Swapping $\rho^p_k$ with any other compatible family of metrics $d_k$, such that the construction yields the diagram \eqref{eq:Diagram-p-variation-LZ} such that 
\begin{enumerate}[label = (\arabic*)]
    \item the UAC property of $(SG^p_d,d)$ holds, since $\eva_1|_{\cI^p}$ is an isomorphism,
    \item the LAC property follows if the minimiser is attained. By properness of $(G^{\floor{p}}(V),d_{\floor{p}})$ for a suitable metric $d_{\floor{p}}$ (i.e.~choose any homogeneous metric), any $p$-variation geodesic space induces an $\R$-tree structure.
\end{enumerate}
We conclude that for all common choices of metrics from rough path analysis the resulting group cannot be a topological group (much less a Lie group) for $\dim V \geq 2$. \\

\paragraph{\textbf{Approximation corollaries.}}
Lastly, we derive a similar approximation result as \cite[Corollary 4.5]{LeDonneZuest_public} in the following way. Let us assume that the diagram \eqref{eq:Diagram-p-variation-LZ} holds in the case of general homogeneous metrics (we may set $d_k = \rho^p_k$ at any point).
The following statement holds.
\begin{corollary} \label{cor:LeDonne-Cor} 
For any (continuous) path $X \colon [0,1] \to
G^{\floor{p}}(V)$ and any positive $\epsilon $ there is some $k \geq \floor{p}$ and a Hölderian geodesic $Y^{(k)} \colon [0,1] \to G^k(V)$ w.r.t.~$p$-variation such that $\pvd{Y^{(k)}}{p}{d_k} \leq K_{d,p} \pvd{X}{p}{d_\up}$ (possibly infinite) for some constant $K_{d,p} > 0$, only depending on $p$ and $d_{\floor p}$, $\pi_\up^k (Y^{(k)} (\delta)) = X(\delta)$
for $\delta = 0, 1$ and
\begin{equation*}
    \|X; \pi_\up^k \circ Y^{(k)}\|_\infty := \sup_{t \in [0,1]} d_{\floor{p}}( X_t, (\pi_{\floor{p}}^k \circ Y^{(k)})_t) < \epsilon.
\end{equation*}
\end{corollary}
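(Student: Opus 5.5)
We follow the strategy of \cite[Corollary 4.5]{LeDonneZuest_public}: lift $X$ into the inverse limit, take a Hölderian geodesic there, and project it back to a finite level.

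\textbf{Reduction to finite $p$-variation.} If $\pvd{X}{p}{d_\up}=\infty$, the bound on $\pvd{Y^{(k)}}{p}{d_k}$ is void. Since $G^\up(V)$ is a proper geodesic-type space (Chow's theorem), we first replace $X$ by a piecewise horizontal path $\tilde X$ through the points $X_{t_i}$ of a fine partition. Uniform continuity of $X$ makes $\tilde X$ uniformly $\epsilon/2$-close to $X$, and $\tilde X$ keeps the endpoints $X_0,X_1$. We may therefore assume $\pvd{X}{p}{d_\up}<\infty$. After left translation, which is an isometry, we also assume $X_0=1_\up$.

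\textbf{Main step.} Set $g:=S(X)\in SG^p_d$. By \cref{prop:Isomorphisms}, $g$ corresponds to $(g_k)_{k\in\N_p}\in G^\infty_p(V)$ with
\[
d_\infty(1_\infty,g)=d^p(1,g)\le \pvd{X}{p}{d_\up}.
\]
For homogeneous $d$ we compare with $d^{p\textnormal{-}\CC}$ by \cref{prop:homogeneous-metric-characterisaton} and the finite-level equivalence of homogeneous metrics. This comparison is where the constant $K_{d,p}$ enters, and it is used only at level $\up$.

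By \cref{prop:Geodesic-triangular-argument} there is a Hölderian geodesic $Y^{(\infty)}$ in $G^\infty_p(V)$ from $1_\infty$ to $g$ with $\pvd{Y^{(\infty)}}{p}{d_\infty}=d_\infty(1_\infty,g)$. Its hypotheses follow from \cref{rem:LeDonneZuest-proof-2.6-and-2.7}, properness of the $G^k(V)$, and continuity of lifts, itself a consequence of the continuity of the Lyons lift. By \cref{prop:Geodesic-triangular-argument}(iii), the level-$\up$ projections of finite-level geodesics $X^{(\sigma(n))}$ converge uniformly to $\pi^\infty_\up\circ Y^{(\infty)}$.

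The key point is to identify $\pi^\infty_\up\circ Y^{(\infty)}$ with $X$ itself, or at least with the same tree-reduced class. By \cref{lem:Reformulation-Uniqueness-to-tree} and \cref{lem:Image-of-ev1}, the injective arc from $1$ to $g$ in $SG^p$ is unique. The geodesic $Y^{(\infty)}$ is injective up to reparametrisation, so it is the lift $\cL(X^\tau)$ of the tree-reduced representative. Hence $\pi^\infty_\up\circ Y^{(\infty)}=X^\tau$ up to reparametrisation.

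\textbf{Expected obstacle: passing from $X^\tau$ to $X$.} The hard part is that $X$ may contain tree-like excursions, so $X^\tau$ need not be uniformly close to $X$. My first attempt is the trick of Le Donne--Züst: replace $X$ beforehand by a tree-reduced approximation. The piecewise horizontal $\tilde X$ can be perturbed at its corner points, for instance by small generic detours, so that it becomes tree-reduced while staying $\epsilon/2$-close. Its $p$-variation grows by at most a factor depending only on $p$ and $d_\up$, which gives $K_{d,p}$.

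Once $X$ is tree-reduced, the approximation statement follows. Choose $n$ large so that
\[
\sup_t d_\up\bigl(X_t,\pi^{\sigma(n)}_\up\circ X^{(\sigma(n))}_t\bigr)<\epsilon,
\]
and set $k=\sigma(n)$, $Y^{(k)}=X^{(\sigma(n))}$. The endpoint conditions hold exactly, since $X^{(k)}$ joins $1_k$ to $g_k$. For the variation bound,
\[
\pvd{Y^{(k)}}{p}{d_k}=d_k(1_k,g_k)\le d_\infty(1_\infty,g)\le K_{d,p}\pvd{X}{p}{d_\up}.
\]

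A remaining technical point is the time parametrisation. Uniform closeness requires both paths to carry the same ($1/p$-Hölder) parametrisation, so we reparametrise $X$ accordingly at the start, as in \cref{rem:more-on-p-variation}.
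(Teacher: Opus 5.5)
Your outline is the paper's own: tree-reduced approximant of finite $p$-variation, lift to the inverse limit, \cref{prop:Geodesic-triangular-argument}, then choose a large level. However, the step that identifies the limit geodesic with the lift of $X$ fails for $p>1$. You assert that $Y^{(\infty)}$ is injective up to reparametrisation and hence equals $\cL(X^\tau)$, but nothing supports this. $Y^{(\infty)}$ is a limit of arbitrary finite-level minimisers and only satisfies $\pvd{Y^{(\infty)}}{p}{d_\infty}=d_\infty(1_\infty,g)$. That is, it minimises $p$-variation over all paths, and for $p>1$ such minimisers are neither unique nor injective (\cref{rem:Injectivity-reason}). For instance, take $p=3/2$ and the Euclidean norm on $\R^2$. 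The segment from $0$ to $e_1$ and the same segment with a perpendicular spike of height $1/5$ at $\tfrac12 e_1$ have the same signature, and both have $p$-variation $1$. The spiked path is a minimiser at every level, so the construction may select it. \cref{lem:Reformulation-Uniqueness-to-tree} gives uniqueness only inside $\mathcal A_{\operatorname{inj}}$, so all you may conclude is $(Y^{(\infty)})^\circlearrowleft=\cL(X)$ up to reparametrisation. The projections $\pi^{\sigma(n)}_\up\circ X^{(\sigma(n))}$, however, converge to $\pi^\infty_\up\circ Y^{(\infty)}$. That path can differ from $X$ by macroscopic tree-like excursions, so the $\epsilon$-closeness to $X$ does not follow. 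You need an additional argument, for example a selection of finite-level minimisers whose limit is tree-reduced. The paper's proof makes the same move (``by uniqueness of $X_\infty$ for its endpoints''), which is only justified among injective paths. For $p=1$ the step is harmless, because geodesics in an $\R$-tree are arcs.

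Second, your parametrisation fix is not legitimate. $X$ is given and the uniform distance is measured in its own time, so you may not reparametrise it. What you can do is compose the finite-level geodesic with a suitable time change. This keeps it a $p$-variation minimiser but destroys the $1/p$-Hölder parametrisation demanded by \cref{def:p-geodesic}, and that loss cannot be avoided. Indeed, a $1/p$-Hölder parametrised $Y^{(k)}$ with $\pvd{Y^{(k)}}{p}{d_k}\le K_{d,p}\pvd{X}{p}{d_\up}$ satisfies $d_\up(\pi^k_\up(Y^{(k)}_s),\pi^k_\up(Y^{(k)}_t))\le K_{d,p}\pvd{X}{p}{d_\up}\,|t-s|^{1/p}$. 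Take $\epsilon<\tfrac12 d_\up(1_\up,\exp_\up(v))$ and let $X$ rest at $1_\up$ on $[0,1-\delta]$ and then run along $t\mapsto\exp_\up(tv)$ to $\exp_\up(v)$ on $[1-\delta,1]$. Then $\pvd{X}{p}{d_\up}$ does not depend on $\delta$, yet such a $Y^{(k)}$ cannot stay $\epsilon$-close to $X$ once $\delta$ is small. So the corollary is only provable with ``Hölderian geodesic'' read as a $p$-variation minimiser in arbitrary parametrisation. The paper's proof (``not necessarily parametrised to be $1/p$-Hölder'') is equally loose on this point.
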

\begin{proof}
    Since any continuous path $X$ can be approximated uniformly w.r.~to $d_{\floor p}$ by tree-reduced paths\footnote{The corresponding tree-reduced approximation is as in \cite[Corollary 4.5]{LeDonneZuest_public}. Since the same argument also applies to paths in $G^k(V)$ and any rectifiable path has finite $p$-variation, every continuous path can be approximated by tree-reduced paths of finite $p$-variation. If $X$ is a path of finite $p$-variation, then \cite[Proposition 8.12]{Friz_Victoir_2010_fullbook} gives rectifiable approximants with uniformly bounded $p$-variation. Since the tree-reducing perturbations can be chosen arbitrarily small in $1$-variation, the resulting tree-reduced approximants still have uniformly bounded $p$-variation. Hence, by equivalence of homogeneous metrics at the fixed step $\floor{p}$, we obtain such a constant $K_{d,p}>0$, depending only on $p$ and $d_{\floor{p}}$.}
    of finite $p$-variation. By a triangular argument, we may assume without loss of generality that $X$ is tree-reduced and of finite $p$-variation. Since $X$ is tree-reduced, its lifted path $X_\infty$ to the inverse limit $(G^\infty_d(V),d^{p\textnormal{-}\CC}_\infty)$ via the diagram is injective and by the arguments above a Hölderian geodesic w.r.t.~$p$-variation (not necessarily parametrised to be $1/p$-Hölder). By uniqueness of $X_\infty$ for its endpoints, we derive from \cref{prop:Geodesic-triangular-argument} increasing steps $k_j$ and geodesics $Y^{(k_j)}$ such that $\pi^{k_j}_{\floor p} \circ Y^{(k_j)} \to X$ uniformly (w.r.t.~$d_{\floor p}$). Hence, by choosing $j$ large enough, we obtain the desired $\epsilon$.
\end{proof}

This enables an approximation in $p$-variation by interpolation. Before giving the statement, let us set for two paths of finite $p$-variation $X,Z \colon [0,1] \to (E,d)$ with values in a metric space $(E,d)$ the function
\begin{equation*} 
    \begin{aligned}
        \|X; Z\|_p &:= \left(\sup_{\mathcal{D}\colon 0 = s_0 < s_1 < ... < s_n = 1}\sum_{[s,t] \in \mathcal{D}} |d(X_t,Z_t) - d(X_s,Z_s)|^p\right)^{1/p} \\
    \end{aligned}
\end{equation*}
with $\|X; Z\|_\infty = \sup_{t \in [0,1]} d(X_t,Z_t)$ as defined in \cref{cor:LeDonne-Cor}.
Equivalently, $\|X; Z\|_p$ is the $p$-variation of the real valued path $t \mapsto d(X_t,Z_t)$ with induced distance given by the absolute value. Then recall that for any $q > p$, we have the following interpolation property:
\begin{equation} \label{eq:Interpolation-estimate}
    \|X; Z\|_q \leq (2\|X; Z\|_\infty)^{1-p/q} \left( \pv{X}{p}{} + \pv{Z}{p}{} \right)^{p/q}.
\end{equation}
This follows from the reverse triangle inequality, assuming $s<t$, so that
\begin{equation} \label{eq:Triangle-inequality}
    |d(X_t,Z_t)-d(X_s,Z_s)| \leq d(X_t,Z_t) + d(X_s,Z_s) \leq 2 \|X; Z\|_\infty,
\end{equation}
the standard interpolation \cite[Proposition 5.5]{Friz_Victoir_2010_fullbook} for any $p < q$ with any path $X\colon [0,1] \to (E,d)$, given by
\begin{equation} \label{eq:Standard-Interpolation-estimate}
    \pvd{X}{q}{d} \leq \left(\sup_{0 \leq u < v \leq 1} d(X_u,X_v) \right)^{1-p/q} \pvd{X}{p}{}^{p/q}
\end{equation}
and again by the inverse triangle inequality
\begin{equation*}
    |d(X_t,Z_t) - d(X_s,Z_s)| \leq d(X_t,X_s) + d(Z_t,Z_s)
\end{equation*}
together with the Minkowski inequality, giving for any finite partition $0 = t_0 < t_1 < ... < t_n = 1$ of $[0,1]$
\begin{equation*}
    \left(\sum_{j = 0}^{n-1} |d(X_{t_{j+1}},Z_{t_{j+1}})-d(X_{t_j},Z_{t_j})|^p\right)^{1/p} \leq \pv{X}{p} + \pv{Z}{p}.
\end{equation*}
This further yields an approximation in the following sense.
\begin{corollary} \label{cor:q-variation-approximation-via-p-geodesics}
For any $X \colon [0,1] \to G^\up(V)$ of finite $p$-variation and any $\epsilon > 0$ there is some $k \geq \floor p$ and a Hölderian geodesic $Y^{(k)} \colon [0,1] \to G^k(V)$ w.r.t.~$p$-variation, with $\pi_\up^k (Y^{(k)} (\delta)) = X(\delta)$
for $\delta = 0, 1$ and $\pvd{Y^{(k)}}{p}{d_{k}} < \infty$, such that
\begin{equation*}
    \|X; \pi_\up^k \circ Y^{(k)}\|_q < \epsilon
\end{equation*}
for any $q > p$.
\end{corollary}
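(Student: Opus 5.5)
The plan is to combine \cref{cor:LeDonne-Cor} with the interpolation estimate \eqref{eq:Interpolation-estimate}. Fix $X$ of finite $p$-variation, $q>p$ and $\epsilon>0$, and write $Z^{(k)} := \pi^k_\up\circ Y^{(k)}$ for the projected geodesic. By \eqref{eq:Interpolation-estimate},
\begin{equation*}
    \|X; Z^{(k)}\|_q \leq \bigl(2\|X; Z^{(k)}\|_\infty\bigr)^{1-p/q}\bigl(\pv{X}{p} + \pv{Z^{(k)}}{p}\bigr)^{p/q}.
\end{equation*}
It therefore suffices to make the uniform distance $\|X;Z^{(k)}\|_\infty$ small while keeping the $p$-variation of $Z^{(k)}$ bounded independently of how small we choose it.

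First, I bound the second factor uniformly. By \cref{cor:LeDonne-Cor}, for every $\eta>0$ there are $k$ and a Hölderian geodesic $Y^{(k)}$ with matching endpoints, $\|X;Z^{(k)}\|_\infty<\eta$, and $\pvd{Y^{(k)}}{p}{d_k}\le K_{d,p}\pvd{X}{p}{d_\up}$. Since $X$ has finite $p$-variation, this bound is finite; this is also where $\pvd{Y^{(k)}}{p}{d_k}<\infty$ comes from. The projection $\pi^k_\up$ is $1$-Lipschitz, so \eqref{eq:p-variation-1-Lipschitz-inequality} gives
\begin{equation*}
    \pv{Z^{(k)}}{p}\le K_{d,p}\pv{X}{p}.
\end{equation*}
Consequently the second factor is at most $M:=\bigl((1+K_{d,p})\pv{X}{p}\bigr)^{p/q}$. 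This constant does not depend on $\eta$ or $k$.

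Second, I choose $\eta$. For the fixed $q$, take $\eta$ with $(2\eta)^{1-p/q}M<\epsilon$, which is possible since $1-p/q>0$. \Cref{cor:LeDonne-Cor} with this $\eta$ then yields the required $k$ and $Y^{(k)}$. If $\pv{X}{p}=0$, then $X$ is constant and the trivial geodesic works.

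The main obstacle is the phrase ``for any $q>p$''. For a single approximant, the choice of $\eta$ above depends on $q$: as $q\downarrow p$ the exponent $1-p/q$ tends to $0$, so no single $\eta$ works uniformly. I would therefore read the statement as $k$ and $Y^{(k)}$ being allowed to depend on $q$. Alternatively, one can fix some $q_0>p$ and obtain the bound for all $q\ge q_0$ at once, using monotonicity $\|\cdot\|_q\le\|\cdot\|_{q_0}$ from \cref{rem:more-on-p-variation}. A secondary point to check is that the constant $K_{d,p}$ from the footnote of \cref{cor:LeDonne-Cor} really is uniform along the approximating sequence, since the whole argument rests on it.
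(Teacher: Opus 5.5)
Your proposal is correct and matches the paper's intended argument. The paper gives no separate proof; it derives the corollary directly from \cref{cor:LeDonne-Cor} and the interpolation estimate \eqref{eq:Interpolation-estimate}, using the uniform bound $K_{d,p}\pv{X}{p}$ to control the second factor, and your reading that $k$ and $Y^{(k)}$ may depend on $q$ is consistent with the paper's own proof of the $p=1$ analogue, where $\delta$ is chosen depending on $q$.
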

Note that the interpolation theorem for $p=1$ (see \cite[Corollary 4.5]{LeDonneZuest_public}) is genuinely stronger by the inverse triangle inequality. Therefore, we state it as an extra corollary. Recall that $V$ is finite dimensional.
\begin{corollary} \label{cor:cor:q-variation-approximation-via-1-geodesics}
    For any path $X \colon [0,1] \to G^1(V)$ of finite $1$-variation and any $\epsilon>0$ there is some $k \geq 1$ and a geodesic $Y^{(k)} \colon [0,1] \to G^k(V)$ such that $\pvd{Y^{(k)}}{1}{d^{1\textnormal{-}\CC}_k} \leq \pvd{X}{1}{d_1}$, $\pi^k_1(Y^{(k)}_\delta) = X_\delta$ for $\delta = 0, 1$ and
    \begin{equation*}
        \|X - \pi^k_1 \circ Y^{(k)}\|_{q\textnormal{-}\var} < \epsilon
    \end{equation*}
    for any $q > 1$.
\end{corollary}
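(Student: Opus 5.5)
The plan is to mimic \cref{cor:q-variation-approximation-via-p-geodesics} in the case $p=1$, using the additivity of $1$-variation to go from the $q$-variation of a distance function to the genuine $q$-variation of the difference path in the vector space $G^1(V)\cong V$. First I will invoke \cref{cor:LeDonne-Cor} with $p=1$ and $d_k=d^{1\textnormal{-}\CC}_k$. For $p=1$ this is exactly \cite[Corollary 4.5]{LeDonneZuest_public}, and it yields the stronger bound $\pvd{Y^{(k)}}{1}{d^{1\textnormal{-}\CC}_k}\le\pvd{X}{1}{d_1}$ with constant $1$. The point is that in the rectifiable case the tree-reduced approximants of $X$ can be chosen with length at most $\pvd{X}{1}{d_1}+\eta$ for arbitrary $\eta>0$. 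The lifted injective path in the inverse limit is then a geodesic in the $\R$-tree of length at most this quantity. By \cref{prop:Geodesic-triangular-argument}, the geodesics $Y^{(k_j)}$ connecting the projected endpoints have length $d^{1\textnormal{-}\CC}_{k_j}$ of their endpoints, which is at most the length of the lifted path. To obtain the bound without the $\eta$, I would replace $X$ by the endpoint-preserving rescaling so that its length is at most $\pvd{X}{1}{d_1}$; alternatively I would use the $\CC$-geodesic at level $k$ directly, whose length equals $d^{1\textnormal{-}\CC}_k(1,g_k)\le\pvd{X}{1}{d_1}$ whenever $g_k=S_k(X)$.

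Next, set $Z:=\pi^k_1\circ Y^{(k)}$ and $W:=X-Z\colon[0,1]\to V$. Here $W_0=W_1=0$, and $\pvd{W}{1}{}\le \pvd{X}{1}{}+\pvd{Z}{1}{}\le 2\pvd{X}{1}{d_1}$, because $\pi^k_1$ is $1$-Lipschitz and \eqref{eq:p-variation-1-Lipschitz-inequality} gives $\pvd{Z}{1}{d_1}\le\pvd{Y^{(k)}}{1}{d^{1\textnormal{-}\CC}_k}$. The key point is that, unlike in \eqref{eq:Interpolation-estimate}, we now interpolate the path $W$ itself in the normed space $V$ rather than the scalar function $t\mapsto d(X_t,Z_t)$. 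This is where the reverse triangle inequality is no longer needed and where the $p=1$ statement is genuinely stronger. By the standard interpolation \eqref{eq:Standard-Interpolation-estimate} with $p=1<q$,
\begin{equation*}
\pvd{W}{q}{}\le \Bigl(\sup_{u<v}\|W_v-W_u\|\Bigr)^{1-1/q}\pvd{W}{1}{}^{1/q}\le (2\|W\|_\infty)^{1-1/q}\,(2\pvd{X}{1}{d_1})^{1/q}.
\end{equation*}

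Finally, choose the $\epsilon'$ of \cref{cor:LeDonne-Cor} so that $\|W\|_\infty=\|X;Z\|_\infty<\epsilon'$. Then for every fixed $q>1$ the right-hand side tends to $0$ as $\epsilon'\to0$. If ``for any $q>1$'' is read uniformly in $q$, one uses the monotonicity $\pvd{W}{q}{}\le\pvd{W}{q_0}{}$ for $q\ge q_0$ from \cref{rem:more-on-p-variation}, which gives uniformity on every $[q_0,\infty)$. The case $\pvd{X}{1}{d_1}=0$ is trivial, since then $X$ is constant. I expect the main obstacle to be the first step, namely securing the sharp bound with constant $1$ (no $K_{d,p}$ and no $\eta$). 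This relies on the fact that for $p=1$ the $\CC$-length of the level-$k$ geodesic is dominated by the Euclidean-type length of the approximating tree-reduced path, which has to be tracked through the approximation argument of \cite[Corollary 4.5]{LeDonneZuest_public}.
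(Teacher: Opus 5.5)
This is the paper's proof. The paper also takes $Y^{(k)}$ from the $p=1$ uniform approximation result (\cref{cor:LeDonne-Cor}, i.e.\ \cite[Corollary 4.5]{LeDonneZuest_public}), with $\pv{Y^{(k)}}{1}\le\pv{X}{1}$ taken ``by assumption''. It then sets $Z=X-\pi^k_1\circ Y^{(k)}$ in $V$ and applies \eqref{eq:Standard-Interpolation-estimate} with increments bounded by $2\sup_t\|Z_t\|$ and $\pv{Z}{1}\le 2\pv{X}{1}$, choosing the tolerance depending on $q$; so your remark that uniformity only holds on $[q_0,\infty)$ is accurate.

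Your extra attempts to re-derive the constant-$1$ bound are not needed, since the paper simply imports it as you do in your first step. The shortcut via the level-$k$ $\CC$-geodesic to $S_k(X)$ should be dropped in any case: for non-tree-reduced $X$ those geodesics approximate the tree reduction of $X$ rather than $X$ itself (for example, a constant path when $X$ is tree-like).
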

\begin{proof}
    Let $\delta > 0$ and fix $k \geq 1$, such that
    \begin{equation*}
        \sup_{t \in [0,1]} \|X_t - \pi^k_1( Y_t^{(k)})\| <\delta.
    \end{equation*}
    We set $Z = X - \pi^k_1 \circ Y^{(k)}$ and apply the standard estimate \eqref{eq:Standard-Interpolation-estimate} with $d(x,y) = \|x-y\|$ (by the identification $G^1(V)$ with $V$) to $\pv{Z}{1}$. We then estimate the right hand side terms of \eqref{eq:Standard-Interpolation-estimate} in this case. First, we infer from the norm properties
    \begin{equation*}
        \sup_{0 \leq s < t \leq 1}d(Z_t,Z_s) \leq 2 \sup_{t \in [0,1]} \|Z_t\| < 2 \delta.
    \end{equation*}
    The remaining term is finite, since
    \begin{equation*}
        \begin{aligned}
            \|Z_t - Z_s\| &= \| (X_t - X_s) - (\pi^k_1(Y^{(k)}_t) - \pi^k_1(Y^{(k)}_s)) \| \\
            &\leq \| X_t - X_s \| + \| \pi^{k}_1( \, (Y_s^{(k)})^{-1} \ Y_t^{(k)}\, ). \|
        \end{aligned}
    \end{equation*}
    By the continuous path lifting, this implies that
    \begin{equation*}
        \pv{Z}{1} \leq \pv{X}{1} + \pv{\pi^{k}_1 \circ Y^{(k)}}{1} \leq \pv{X}{1} + \pv{Y^{(k)}}{1} =: M_k < \infty.
    \end{equation*}
    Moreover, by assumption, $\pv{Y^{(k)}}{1} \leq \pv{X}{1} < \infty$ and thus $M_k \leq 2 \pv{X}{1} =: M < \infty$ for any initially chosen $k$.
    Interpolation then yields
    \begin{equation*}
        \pv{Z}{q} \leq (2\delta)^{1-1/q} M^{1/q}.
    \end{equation*}
    Then choosing $\delta$ such that $(2\delta)^{1-1/q} M^{1/q} < \epsilon$ closes the proof.
\end{proof}
\begin{remark}
    The proof for the Banach case $p=1$ implicitly uses that right translations are isometries. For a general metric group in the sense of our category, this might not be the case. However, if there exist uniform estimates bounding right translation (hence also making it continuous) according to $d(g l,h l) \leq C d(g,h)$, then a similar result to the Banach case can be deduced in a direct way.
\end{remark}

\section{Transfer to Character Groups}
\label{sec:Character-Groups}
In this section we sketch how the results obtained in the rest of the article transfer to the setting of character groups of Hopf algebras. Various flavours of rough paths (for example branched rough paths, cf. \cite{Schmeding_2022}) can be realised as rough paths taking values in (the character group) of a suitable Hopf algebra. Let us first recall:
\begin{definition}
Let $\cH=(\cH,m,\Delta,\varepsilon,S)$ be a real Hopf algebra. We say that $\cH$ is
\emph{connected, graded and of finite type} if
\begin{equation}
    \cH=\bigoplus_{n\in\N_0}\cH_n,\qquad \cH_0\cong\R,\qquad \dim(\cH_n)<\infty
    \quad\forall n\in\N_0,
\end{equation}
and the algebra and coalgebra structures are compatible with the grading, i.e.
\begin{equation}
    m(\cH_n\otimes \cH_m)\subseteq \cH_{n+m},
    \qquad
    \Delta(\cH_n)\subseteq \bigoplus_{k+\ell=n}\cH_k\otimes \cH_\ell
\end{equation}
for all $n,m\in\N_0$.
\end{definition}
The character group of  a connected graded Hopf algebra $\cH$ is defined by
\begin{equation}
    G(\cH) :=
    \left\{
        \varphi\in \cH^\ast \;\middle|\;
        \langle \varphi,m(u,v)\rangle=\langle \varphi,u\rangle\langle \varphi,v\rangle
        \ \forall u,v\in \cH,\ 
        \varphi(1_\cH)=1
    \right\},
\end{equation}
where we write $\langle \varphi,h\rangle:=\varphi(h)$ for the canonical pairing. The group
law is given by convolution,
\begin{equation*}
    (\varphi\star\psi)(h) := \langle \varphi\otimes\psi,\Delta h\rangle, \qquad h\in \cH.
\end{equation*}
In the same way, for $k$  in $\N_0$ we set
\begin{equation}
    \cH_{\leq k}:=\bigoplus_{j=0}^k \cH_j
\end{equation}
and define the truncated character group of step $k$ by
\begin{equation*}
    G_k(\cH) :=
    \left\{
        g\in \cH_{\leq k}^\ast \;\middle|\;
        \langle g,m(u,v)\rangle=\langle g,u\rangle\langle g,v\rangle
        \ \forall u\in\cH_n,\ v\in\cH_m,\ n+m\leq k,\
        g(1_\cH)=1
    \right\}.
\end{equation*}
Equipped with the induced convolution product, $(G_k(\cH),\star_k)$ is a nilpotent finite-dimensional
Lie group with respect to the subspace topology induced by the vector topology on the convolution algebra. Note that this turns the Lie group exponential map into a diffeomorphism. The projection maps
\begin{equation*}
    \pi_j^k:G_k(\cH)\to G_j(\cH),\qquad \pi_j^k(g):=g|_{\cH_{\leq j}},
    \qquad k\geq j,
\end{equation*}
yield again an inverse system of Lie groups
\begin{equation*}
    \cG_p(\cH):=\bigl(G_k(\cH),(\pi_j^k)_{k\geq j\geq \lfloor p\rfloor}\bigr)_{k\in\N_p}.
\end{equation*}
Assume now that there is a metric family
\begin{equation*}
    d=(d_k)_{k\in\N_p}\in \metlift(\cG_p(\cH)),
\end{equation*}
i.e.~the projections $\pi_j^k$ are $1$-Lipschitz and the same extension/lifting theorem
as in Section \ref{sec:Canonical-metric-relations-goals} holds from level ${\floor{p}}$ to all higher levels. Then all constructions and arguments for rough paths carry over directly after replacing the free nilpotent groups with the truncated character groups $G_k(\cH)$. This gives rise to the notion of Hopf algebra valued rough paths. For the choice of the shuffle algebra we recover the weakly geometric rough paths. In case of the Butcher--Connes--Kreimer, we obtain the branched rough paths \cite[Example 8.27]{Schmeding_2022}.
\begin{remark}\label{rem:hopfrp}
    In the Hopf algebraic rough path setting considered in \cite{Tapia_2020}, a lifting theorem for rough paths from the truncated groups to the character group of the Hopf algebra is available, see~\cite[Theorem 3.4 and Theorem 3.9]{Tapia_2020}. To the best of our knowledge no general replacement for \cref{prop:Kernel-signature-characterisation}, i.e.~a characterization of the kernel of the corresponding signature by tree-like equivalence, is known in the literature. However, for certain Hopf algebras (encompassing in particular the Hopf algebras associated to branched, planarly branched and quasi-geometric rough paths), the results from \cite{BaFaT26} can be used to characterise the kernel: Given a suitable Hopf algebra $\cH$ (cofree over the primitive elements, see loc.cit. for the conditions), \cite[Theorem 3.4]{BaFaT26} constructs an isomorphism from $\cH$ to the shuffle Hopf algebra over the primitive elements. This identifies Hopf algebra valued paths with (anisotropic) geometric rough paths. Then \cite[Theorem 1.1]{BOEDIHARDJO_LYONS_2016720} applies for the geometric regime and characterises the kernel. Note however, that due to the change of the Hopf algebra using the isomorphism, one loses the direct interpretation of tree-like equivalence. One would have to track the elements in the new coordinates to interpret tree-likeness as a condition on the original Hopf algebra valued rough paths.
\end{remark}
    Once the kernel of the signature map is known, the remaining constructions and the tree-topological conclusions transfer to the setting of Hopf algebra valued rough paths. So we deduce from \Cref{rem:hopfrp} that in particular a suitable version of the conclusions of \Cref{prop:diag_poitmetgp} holds.
    
\section{Further Remarks}

The (full) isomorphism property of $\Pi^1$ was already present in \cite{LeDonneZuest_public}, but not explicitly used as the discussion was restricted to the group isomorphism only. This work makes this part more explicit.

Furthermore, by establishing the diagram together with the abstract results, we hope that the reader now has deeper insights into the connection between \cite{BOEDIHARDJO_LYONS_2016720} and \cite{LeDonneZuest_public}.
\begin{remark}
    We stress that one must be aware that the notion of paths is still hidden inside the choice of $d^{p\textnormal{-}\CC}$ for $G_{d^{p\textnormal{-}\CC}\textnormal{-}p.r.c.}$.
\end{remark}
While \cref{prop:Isomorphisms} together with the NoGo-Theorem by a discussion of Felix Medwed with Terry Lyons during the Oxford-Berlin-Meeting in December 2025 is certainly known in some form, this work gives a \textquote{top-down view} on the signature group from the perspective of metric geometry, ref.~\eqref{eq:Intrinsic-vs-Extrinsic}, in the form \textquote{$\{ g \textrm{ group-like}\mid \textrm{Property of } g \}$}, which can be a preferred setup for an algebraist or geometer.

\section*{Tool and computational resource disclosure}

During the preparation of the manuscript we used standard LLM models (ChatGPT 5.0 to 5.6 Sol) to discuss, generate and improve proofs and counterexamples. This led in particular to the current versions of \cref{lem:Reformulation-Uniqueness-to-tree}, \cref{lem:euivalent-topologies}, \cref{prop:homogeneous-metric-characterisaton} and \cref{prop:Isomorphisms}.

\printbibliography

\end{document}